\newcounter{questioncounter}
\newtheorem{problem}[questioncounter]{Open Problem}
\newtheorem*{mainproblem}{Main Problem}
\newenvironment{mproblem}
  {\begin{mdframed}\begin{mainproblem}}
  {\end{mainproblem}\end{mdframed}}
\newcommand{\reals}{\mathbb{R}}
\newcommand{\q}{\mathbb{Q}}
\newcommand{\cm}[1]{\operatorname{CM}_{#1}}
\newcommand{\ideal}[1]{\left\langle #1 \right\rangle}
\newcommand{\grobner}{Gr\"obner}
\newcommand{\ffield}[1]{\operatorname{Frac}{#1}}
\newcommand{\codim}[1]{\operatorname{codim}{#1}}
\newcommand{\supp}[1]{\operatorname{supp}{#1}}
\newcommand{\res}[3]{\operatorname{Res}(#1, #2, #3)}
\newcommand{\lmat}[1]{\operatorname{\mathcal L}_{#1}} 
\newcommand{\amat}[1]{\operatorname{\mathcal A}(#1)} 
\newcommand{\smat}[1]{\operatorname{\mathcal S}_{#1}} 
\newcommand{\cres}[3]{\operatorname{CRes}(#1, #2, #3)}
\newcommand{\rank}{\operatorname{rank}}
\title{Combinatorial Resultants in the Algebraic Rigidity Matroid}
\titlerunning{Combinatorial Resultants in the Algebraic Rigidity Matroid}
\author{Goran Mali\'c}{Computer Science Department, Smith College, Northampton MA, USA 
\and 
\url{http://goranmalic.com} }
{goranm00@gmail.com, gmalic@smith.edu}
{https://orcid.org/0000-0001-9340-7827}
{}
\author{Ileana Streinu\footnote{Corresponding author}}{Computer Science Department, Smith College, Northampton, MA, USA 
\and 
\url{http://cs.smith.edu/~istreinu}}
{istreinu@smith.edu, streinu@cs.umass.edu}
{https://orcid.org/0000-0002-1825-0097}
{}
\authorrunning{G. Mali\'c and I. Streinu} 
\keywords{Cayley-Menger ideal, rigidity matroid, circuit polynomial, combinatorial resultant, inductive construction, Gr\"obner basis elimination} 
\begin{document}

\maketitle

\begin{abstract}	
Motivated by a rigidity-theoretic perspective on the {\em Localization Problem} in 2D, we develop an algorithm for computing {\em circuit polynomials} in the algebraic rigidity matroid $\cm{n}$ associated to the Cayley-Menger ideal for $n$ points in 2D.  We introduce {\em combinatorial resultants}, a new operation on graphs that captures properties of the Sylvester resultant of two polynomials in the algebraic rigidity matroid. We show that every rigidity circuit has a {\em construction tree} from $K_4$ graphs based on this operation. Our algorithm performs an {\em algebraic elimination} guided by the construction tree, and uses classical resultants, factorization and ideal membership. 
To demonstrate its effectiveness, we implemented our algorithm in Mathematica: it took less than 15 seconds on an example where a Gr\"obner Basis calculation took 5 days and 6 hrs.
\end{abstract}

\parskip7pt
\parindent0pt

\section{Introduction}
\label{app:sec:introduction}

This paper addresses combinatorial, algebraic and algorithmic aspects of a question motivated by the following ubiquitous problem from {\em distance geometry}: 

\subparagraph{Localization.} A graph together with {\em weights} associated to its edges is given. The goal is to find {\em placements} of the graph in some Euclidean space, so that the edge lengths match the given weights. In this paper, we work in 2D. A system of quadratic equations can be easily set up so that the possible placements are among the (real) solutions of this system. Rigidity Theory can help predict, {\em a priori,} whether the set of solutions will be discrete (if the given weighted graph is {\em rigid}) or continuous (if the graph is {\em flexible}).  In the rigid case, the  double-exponential Gr\"obner basis algorithm can be used, in principle,  to eliminate all but one of the variables. Once a polynomial in a single variable is obtained, numerical methods are used to solve it. We then select one solution, substitute it in the original equations, eliminate to get a polynomial in a new variable and repeat. 

\subparagraph{Single Unknown Distance Problem.}  Instead of attempting to directly compute the coordinates of all the vertices, we restrict our attention to the related problem of finding the possible values of a {\em single unknown distance} corresponding to a {\em non-edge} (a pair of vertices that are not connected by an edge). Indeed, {\em if} we could solve this problem for a collection of non-edge pairs that form a trilateration when added to the original collection of edges, {\em then} a single solution in Cartesian coordinates could be easily computed afterwards in linearly many steps of quadratic equation solving.

\subparagraph{Rigidity circuits.}
We formulate the {\em single unknown distance} problem in terms of Cayley rather than Cartesian coordinates. Known theorems from Distance Geometry, Rigidity Theory and Matroid Theory help reduce this problem to finding a certain irreducible polynomial in the Cayley-Menger ideal, called the  {\em circuit polynomial}. Its support is a graph called a {\em circuit in the rigidity matroid}, or shortly a {\em rigidity circuit.} Substituting given edge lengths in the circuit polynomial results in a uni-variate polynomial which can be solved for the unknown distance.

The focus of this paper is the following:
\begin{mproblem}
{\em Given a rigidity circuit, compute its corresponding circuit polynomial.}
\end{mproblem}

\subparagraph{Related work.}  
While both \emph{distance geometry} and \emph{rigidity theory} have a distinguished history for which a comprehensive overview would be too long to include here, very little is known about computing \emph{circuit polynomials.}
\emph{To the best of our knowledge,} their study in \emph{arbitrary} polynomial ideals was initiated in the PhD thesis of Rosen \cite{rosen:thesis}. His Macaulay2 code \cite{rosen:GitHubRepo} is useful for exploring small cases, but the Cayley-Menger ideal is beyond its reach. A recent article \cite{rosen:sidman:theran:algebraicMatroidsAction:2020} popularizes algebraic matroids and uses for illustration the smallest circuit polynomial $K_4$ in the Cayley-Menger ideal. \emph{We could not find non-trivial examples anywhere. } Indirectly related to our problem are results such as \cite{WalterHusty}, where an explicit univariate polynomial of degree 8 is computed (for an unknown angle in a $K_{3,3}$ configuration given by edge lengths, from which the placement of the vertices is determined) and \cite{sitharam:convexConfigSpaces:2010}, for its usage of Cayley coordinates in the study of configuration spaces of some families of distance graphs. A closely related problem is that of computing the \emph{number of embeddings of a minimally rigid graph} \cite{streinu:borcea:numberEmbeddings:2004}, which has received \emph{a lot} of attention in recent years (e.g. \cite{capco:schicho:realizations:2017,bartzos:emiris:etAl:realizations:2021,emiris:tsigaridas:varvitsiotis:mixedVolume,emiris:mourrain}, to name a few).
References to specific results in the literature that are relevant to the theory developed here and to our proofs are given throughout the paper.

\subparagraph{How tractable is the problem?} 
Circuit polynomial computations can be done, in principle, with the double-exponential time Gr\"obner basis algorithm with an elimination order.  On one example, the {\bf GroebnerBasis} function of Mathematica 12 (running on a 2019 iMac computer with 6 cores at 3.6Ghz) took 5 days and 6 hours, but in most cases it timed out or crashed. Our goal is to make such calculations {\em more tractable} by taking advantage of {\em structural information} inherent in the problem. 
 
\subparagraph{Our Results.} 
We describe a new {\em algorithm to compute a circuit polynomial with known support.} It relies on resultant-based elimination steps guided by a novel {\em inductive construction for rigidity circuits}. 
While inductive constructions have been often used in Rigidity Theory, most notably the Henneberg sequences for Laman graphs \cite{henneberg:graphischeStatik:1911-68} and Henneberg II sequences for rigidity circuits \cite{BergJordan}, we argue that our construction is more {\em natural} due to its direct algebraic interpretation. In fact, this paper originated from our attempt to interpret Henneberg II algebraically. We have implemented our method in Mathematica and applied it successfully to compute all circuit polynomials on up to $6$ vertices and a few on $7$ vertices, the largest of which having over two million terms. 
The previously mentioned example that took over 5 days to complete with GroebnerBasis, was solved by our algorithm in less than 15 seconds. 

\subparagraph{Main Theorems.} We first define the {\em combinatorial resultant} of two graphs as an abstraction of the classical resultant. Our main theoretical result is split into the combinatorial Theorem~\ref{thm:combResConstruction} and the algebraic Theorem~\ref{thm:circPolyConstruction}, each with an algorithmic counterpart. 


\begin{theorem}
	\label{thm:combResConstruction}
	Each rigidity circuit can be obtained, inductively, by applying combinatorial resultant operations starting from $K_4$ circuits. The construction is captured by a binary {\em resultant tree} whose nodes are intermediate rigidity circuits and whose leaves are $K_4$ graphs.
\end{theorem}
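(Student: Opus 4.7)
My plan is induction on $n := |V(C)|$. The base case $n=4$ is immediate: a rigidity circuit on $4$ vertices has $2n-2 = 6$ edges, hence $C = K_4$, which is by itself a trivial resultant tree consisting of a single leaf.

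For the inductive step, suppose the claim holds for all rigidity circuits on fewer than $n \geq 5$ vertices. The key tool is the Berg--Jord\'an theorem \cite{BergJordan}: every rigidity circuit on $n \geq 5$ vertices admits an inverse Henneberg II reduction, i.e.\ there is a degree-$3$ vertex $v$ with neighbors $a,b,c$ and a choice of pair, say $\{a,b\}$, such that $C' := (C \setminus \{v\}) \cup \{ab\}$ is again a rigidity circuit, on $n-1$ vertices. I would then take $C_1 := C'$ and $C_2 := K_4$ on vertex set $\{v,a,b,c\}$; both are rigidity circuits, they share the edge $e = \{a,b\}$, and the goal is to verify
\[
C \;=\; \cres{C_1}{C_2}{e} \;=\; (C_1 \cup C_2) \setminus \{e\}.
\]
Granted this, the binary resultant tree for $C$ is obtained by placing $C$ as the root above the inductively produced tree for $C_1$ on one side and the singleton leaf $C_2 = K_4$ on the other.

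The main obstacle is the edge-bookkeeping needed to establish the identity above. The inclusion $C \subseteq (C_1 \cup C_2) \setminus \{e\}$ is immediate from unfolding the definitions of $C_1$ and $C_2$. For the reverse inclusion, the $K_4$ on $\{v,a,b,c\}$ contributes $\{a,c\}$ and $\{b,c\}$ to the union, and these edges need not belong to $C$ a priori. Hence the naive choice $C_2 = K_4$ cleanly yields $C$ only when $a,b,c$ span a triangle in $C$. Two complementary strategies can handle the general case. First, the Berg--Jord\'an theorem offers freedom in choosing the pair among $\{a,b\},\{a,c\},\{b,c\}$ and, typically, in choosing the admissible degree-$3$ vertex; I would first search for a reduction whose two complementary edges are already present in $C$. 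Second, for configurations where no such good choice exists, I would replace $C_2$ by a different rigidity circuit that is still strictly smaller than $C$, contains $v$, and meets $C_1$ exactly in $\{e\}$; its existence can be argued by a circuit-elimination step in the rigidity matroid applied to the single-element extension $C + e$, which yields two circuits meeting only in $e$. A case analysis on the triangle pattern of $C[\{a,b,c\}]$, anchored by direct verification for circuits on $5$ and $6$ vertices, should close out the remaining cases and complete the induction.
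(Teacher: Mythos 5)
Your induction and base case are fine, and your use of inverse Henneberg II is indeed the right starting point for the paper's argument, but there are two genuine gaps. First, you invoke Berg--Jord\'an for \emph{every} rigidity circuit on $n\geq 5$ vertices, whereas their theorem only applies to $3$-connected circuits. This is not a technicality: the 2D double banana on $6$ vertices (edges $12,13,23,24,34,15,16,56,45,46$) admits \emph{no} inverse Henneberg II reduction to a circuit at all --- each degree-$3$ vertex, e.g.\ vertex $2$ with neighbours $1,3,4$, forces the added edge to be $14$, and the resulting graph strictly contains the $K_4$ circuit on $\{1,4,5,6\}$, so it is not a circuit. The paper disposes of circuits that are $2$-connected but not $3$-connected separately, via Tutte's decomposition along separating pairs: such a circuit is a $2$-sum of smaller circuits, and the $2$-sum is exactly the special case of the combinatorial resultant in which the two circuits share a single edge. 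Your induction, as set up, has no way to proceed on such circuits.

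Second, in the $3$-connected case the real content of the proof is the construction of the second circuit, and this is where your proposal stops short. You correctly observe that $C_2=K_4$ on $\{v,a,b,c\}$ only works when the neighbours already span the two missing triangle edges, but your fallback --- a circuit containing $v$ that ``meets $C_1$ exactly in $\{e\}$'', obtained by circuit elimination in $C+e$ --- cannot exist when $C$ is $3$-connected: if $E(C_1)\cap E(C_2)=\{e\}$ then $\cres{C_1}{C_2}{e}$ is a $2$-sum, and the endpoints of $e$ form a separating pair of it. What is actually needed is $E(C_1)\cup E(C_2)=E(C)\cup\{e\}$ with both circuits on at most $n$ vertices, and the paper's mechanism for this uses precisely the part of Berg--Jord\'an you dropped: the existence of a \emph{second, non-adjacent} degree-$3$ vertex $b$. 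One takes $C_2$ to be the unique circuit of the Laman-plus-one graph $(C-b)+e$; it contains no edges outside $E(C)\cup\{e\}$ by construction, it avoids $b$ and hence has at most $n$ vertices, and it must contain $v$ together with its three edges, since otherwise it would be properly contained in the circuit $C_1$, contradicting minimality. Your closing appeal to a case analysis on the triangle pattern of $C[\{a,b,c\}]$ plus checking $n=5,6$ does not substitute for this argument, so the inductive step is not established.
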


Theorem~\ref{thm:combResConstruction} leads to a  {\em graph algorithm} for finding a {\em combinatorial resultant tree} of a circuit. Each step of the construction can be carried out in polynomial time using variations on the {\em Pebble Game} matroidal sparsity algorithms \cite{streinu:lee:pebbleGames:2008} combined with Hopcroft and Tarjan's linear time $3$-connectivity algorithm \cite{hopcroft:tarjan:73}. However, it is conceivable that the tree could be exponentially large and thus the entire construction could take an exponential number of steps: understanding in detail the algorithmic complexity of our method {\em remains a problem for further investigation.}


\begin{theorem}
	\label{thm:circPolyConstruction}
	Each circuit polynomial can be obtained, inductively, by applying resultant operations. The procedure is guided by the combinatorial resultant tree from Theorem~\ref{thm:combResConstruction} and builds up from $K_4$ circuit polynomials. At each step, the resultant produces a polynomial that may not be irreducible. A polynomial factorization and a test of membership in the ideal is applied to identify the factor which is the circuit polynomial.
\end{theorem}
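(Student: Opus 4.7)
The plan is to mirror the combinatorial resultant construction at the algebraic level using classical Sylvester resultants, proceeding by induction along the resultant tree guaranteed by Theorem~\ref{thm:combResConstruction}.

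For the base case, the leaves of the tree are $K_4$ circuits, whose circuit polynomial is the classical Cayley-Menger $4$-point determinant for the plane; this can be written down explicitly.

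For the inductive step, suppose $C = \cres{C_1}{C_2}{e}$, and that by induction the circuit polynomials $p_{C_1}$ and $p_{C_2}$ have already been constructed. By the definition of the combinatorial resultant, $e \in E(C_1) \cap E(C_2)$ while $E(C) = (E(C_1) \cup E(C_2)) \setminus \{e\}$. Since the support of a circuit polynomial equals the full edge set of its circuit, each $p_{C_i}$ involves the variable $x_e$; moreover, being irreducible, neither has $x_e$ as a factor. Thus the Sylvester resultant $R := \operatorname{Res}(p_{C_1}, p_{C_2}, x_e)$ is a well-defined polynomial in the variables indexed by edges of $E(C)$. By standard properties of the Sylvester resultant, $R \in \langle p_{C_1}, p_{C_2}\rangle \subseteq \cm{n}$, and since $\cm{n}$ is prime, at least one irreducible factor of $R$ lies in $\cm{n}$. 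The algorithm then factors $R$, tests each irreducible factor for ideal membership, and among those belonging to $\cm{n}$ selects the one whose support is exactly $E(C)$; this is the desired $p_C$.

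The main obstacle is showing that such a factor exists and is uniquely identified by this procedure. One must verify that $p_{C_1}$ and $p_{C_2}$ share no common factor (so that $R \neq 0$); that the resultant does not accidentally drop variables and hence has support exactly $E(C)$; and that $p_C$ -- characterized up to scalar as the unique irreducible polynomial in $\cm{n}$ with support $E(C)$ -- actually divides $R$. The combinatorial features of the resultant operation, in particular the overlap pattern of $C_1$ and $C_2$ along edge $e$ and the fact that $C$ is itself a rigidity circuit, are central to these verifications. From an algorithmic standpoint, the ideal membership test is the bottleneck; however, it is performed only on the small irreducible factors of $R$, never on $R$ itself, which is precisely what allows the method to outperform a direct \grobner{} basis computation.
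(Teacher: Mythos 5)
Your overall architecture is the same as the paper's: induct along the combinatorial resultant tree of Theorem~\ref{thm:combResConstruction}, take the Sylvester resultant $R=\res{p_{C_1}}{p_{C_2}}{x_e}$ at each node, observe $R\in\ideal{p_{C_1},p_{C_2}}\subseteq\cm{n}$, then factor and use support analysis plus an ideal membership test. However, there is a genuine gap at exactly the point you yourself label ``the main obstacle'': you never prove that $p_C$ actually divides $R$, i.e.\ that a factor of $R$ lying in $\cm{n}$ with support $E(C)$ exists at all. Primality of $\cm{n}$ only gives you \emph{some} irreducible factor of $R$ in $\cm{n}$; without more, the algorithm's selection step is not justified. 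The paper closes this by two ingredients you do not supply: (i) Lemma~\ref{lem:resultantSupport} (via Proposition~\ref{prop:resultantElimination}), which places $R$ in the elimination ideal $\cm{n}\cap\q[\supp{p_C}]$, and (ii) Theorem~\ref{app:thm:circuitPolyPrincipalIdeal}, which says that because $C$ is a circuit of the algebraic matroid (here one also uses Theorem~\ref{theorem:algSparseIsomorphic} and the fact that every node of the tree is a rigidity circuit), this elimination ideal has codimension $1$, hence by Krull's Height Theorem is principal and, by Dress--Lov\'asz uniqueness, equals $\ideal{p_C}$. Together with $R\neq 0$ (distinct irreducible circuit polynomials share no common factor, Proposition~\ref{prop:resultantCommonFactor}) this yields $p_C\mid R$, which is the content of Theorem~\ref{lem:structureCircuitPoly} and is what makes the factorization-plus-membership step guaranteed to succeed.

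Two smaller remarks. Your worry that the resultant might ``accidentally drop variables'' is a red herring: nothing requires $\supp R=E(C)$; all that is needed is $R\in\q[E(C)]$, $R\neq 0$, and $R$ in the elimination ideal. Also, your own setup could be completed without the principal-ideal theorem by the following argument, which you gesture at but do not carry out: any irreducible factor $q$ of $R$ with $q\in\cm{n}$ has $\supp q\subseteq E(C)$, so $\supp q$ is a dependent set contained in the circuit $C$ and hence equals $C$ by minimality; uniqueness of circuit polynomials then forces $q=\alpha p_C$, so the factor produced by primality is automatically (a scalar multiple of) $p_C$. Either route must be written out; as it stands, the central divisibility claim is asserted rather than proved.
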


The resulting {\em algebraic elimination algorithm} runs in exponential time, in part because of the growth in size of the polynomials that are being produced. Several theoretical {\em open questions} remain, whose answers may affect the precise time complexity analysis. 

\subparagraph{Computational experiments.} We implemented our algorithms in Mathematica V12.1.1.0 on two personal computers with the following specifications: Intel i5-9300H 2.4GHz, 32 GB RAM, Windows 10 64-bit; and Intel i5-9600K 3.7GHz, 16 GB RAM, macOS Mojave 10.14.5. We also explored Macaulay2, but it was much slower 
than Mathematica (hours vs. seconds) in computing one of our examples. The polynomials resulting from our calculations are made available on a github repository \cite{malic:streinu:GitHubRepo}. 

\subparagraph{Overview of the paper.} In Section \ref{sec:prelimRigidity} we introduce the background concepts from matroid theory and rigidity theory. We introduce combinatorial resultants in Section \ref{sec:combRes}, prove Theorem \ref{thm:combResConstruction} and describe the algorithm for computing a {\em combinatorial resultant tree.} In Section~\ref{sec:prelimResultants}  we introduce the background concepts pertaining to resultants and elimination ideals. In Section~\ref{sec:prelimAlgMatroids} we introduce algebraic matroids. In Section~\ref{sec:prelimCMideal} we introduce the Cayley-Menger ideal and define properties of its circuit polynomials. In Section \ref{sec:algResCircuits} we prove Theorem \ref{thm:circPolyConstruction} and in Section~\ref{sec:experiments} we present a summary of the preliminary experimental results we carried with our implementation. We conclude in Section~\ref{sec:concludingRemarks} with a summary of remaining open questions. 

\section{Preliminaries: rigidity circuits}
\label{sec:prelimRigidity}

We start with the combinatorial aspects of our problem. In this section we review the essential notions and results from combinatorial rigidity theory of bar-and-joint frameworks in dimension $2$ that are relevant for our paper.

\subparagraph{Notation.} We work with (sub)graphs given by subsets $E$ of edges of the complete graph $K_n$ on vertices $[n]:=\{1,\cdots,n\}$. If $G$ is a (sub)graph, then $V(G)$, resp. $E(G)$ denote its vertex, resp. edge set. The support of $G$ is $E(G)$. The {\em vertex span} $V(E)$ of edges $E$ is the set of all edge-endpoint vertices. A subgraph $G$ is {\em spanning} if its edge set $E(G)$ spans $[n]$. The {\em neighbours} $N(v)$ of $v$ are the vertices adjacent to $v$ in $G$.

\subparagraph{Frameworks.} A {\em 2D bar-and-joint framework} is a pair $(G,p)$ of a graph $G=(V,E)$ whose vertices $V=[n]:= \{1,2,\dots,n\}$ are mapped to points $p=\{ p_1,\dots,p_n\}$ in $\mathbb R^2$ via the \emph{placement map} $p\colon V\to\mathbb R^2$ given by $i\mapsto p_i$. We view the edges as {\em rigid bars} and the vertices as {\em rotatable joints} which allow the framework to deform continuously as long as the bars retain their original lengths. The {\em realization space} of the framework is the set of all its possible placements in the plane with the same bar lengths. Two realizations are congruent if they are related by a planar isometry. The {\em configuration space} of the framework is made of congruence classes of realizations. The {\em deformation space} of a given framework is the particular connected component of the configuration space that contains it.

A framework is {\em rigid} if its deformation space consists in exactly one configuration, and {\em flexible} otherwise. Combinatorial rigidity theory of bar-and-joint frameworks seeks to understand the rigidity and flexibility of frameworks in terms of their underlying graphs. 

\subparagraph{Laman Graphs.} The following theorem relates rigidity theory of 2D bar-and-joint frameworks to graph sparsity.

\begin{theorem} [Laman's Theorem]
	\label{thm:laman}
	A bar-and-joint framework is {\em generically} minimally rigid in 2D iff its underlying graph $G=(V,E)$ has exactly $|E|=2|V|-3$ edges, and any proper subset $V'\subset V$ of vertices spans at most $2|V'|-3$ edges.
\end{theorem}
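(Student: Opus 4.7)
The plan is to pass from a combinatorial statement about $G$ to a rank statement about the \emph{rigidity matrix} $R(G,p)$, the $|E|\times 2|V|$ matrix whose row indexed by an edge $ij$ has entries $p_i-p_j$ in the two columns for vertex $i$ and $p_j-p_i$ in the two columns for vertex $j$ (and zeros elsewhere). The kernel of $R(G,p)$ always contains the $3$-dimensional space of planar trivial infinitesimal motions (two translations and one rotation), so $(G,p)$ is infinitesimally rigid iff $\operatorname{rank} R(G,p)=2|V|-3$. A standard density argument shows that the rank of $R(G,p)$ attains its maximum on a Zariski-open subset of placements, so the right-hand side of Laman's equivalence is captured by linear independence and rank in the \emph{generic rigidity matroid} on $E(K_n)$, whose independent sets are the edge sets $F$ for which the rows of $R(K_n,p)$ indexed by $F$ are linearly independent for generic $p$.

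For the necessity direction, assume $(G,p)$ is generically minimally rigid. Pick a generic placement; then $R(G,p)$ has rank $2|V|-3$, and minimality forces its rows to be independent, hence $|E|=2|V|-3$. For any $V'\subsetneq V$ with induced edge set $E'$, the submatrix of $R(G,p)$ on rows $E'$ and columns $V'$ is exactly the rigidity matrix of the induced sub-framework, whose rank is bounded above by $2|V'|-3$ because it too has a $3$-dimensional trivial kernel. Linear independence of these rows then yields $|E'|\le 2|V'|-3$, which is the hereditary sparsity count. This half is essentially pure dimension counting.

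For the sufficiency direction, I would proceed by induction on $|V|$ using \emph{Henneberg moves}. Starting from a single edge, the $0$-extension adds a new vertex with two edges to existing vertices, and the $1$-extension removes an existing edge $uv$ and adds a new vertex joined to $u$, $v$, and one further vertex $w$. Both preserve the sparsity count, and both preserve infinitesimal rigidity at generic placements: the $0$-extension by choosing the new vertex off the line through its two neighbors, the $1$-extension by a direct calculation that the added rows compensate for the removed row at a generic placement. The inductive step then rests on the combinatorial lemma that every graph satisfying the Laman conditions admits either a vertex of degree $2$, or a vertex of degree $3$ together with a non-neighbor pair among its neighbors whose addition, after deleting the vertex, yields a smaller Laman graph.

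The main obstacle is precisely this combinatorial lemma in the degree-$3$ case. Sparsity forces a vertex of degree $2$ or $3$ by an averaging argument, but when only degree-$3$ vertices are available, one must find an admissible non-edge among the three neighbors such that adding it back does not violate the $2|V'|-3$ bound on any subset. The standard resolution is an exchange argument on \emph{tight} subgraphs (those with $|E'|=2|V'|-3$): if one candidate non-edge lies inside a tight set obstructing the move, the tightness forces structure that makes another candidate admissible. Handling this carefully, and verifying that the generic placement hypothesis is preserved across each inductive step (i.e., that the bad locus where $R$ drops rank is a proper subvariety), is the delicate part; the rest is linear algebra and bookkeeping.
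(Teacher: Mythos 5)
The paper does not actually prove Theorem~\ref{thm:laman}: it is quoted as classical background and used as a black box (e.g.\ to identify the bases of $\smat n$ with Laman graphs), so there is no in-paper argument to measure yours against. On its own terms, your outline is the standard route: rigidity matrix $R(G,p)$, trivial $3$-dimensional kernel, generic rank, necessity by dimension counting on induced sub-frameworks, and sufficiency by induction via Henneberg $0$- and $1$-extensions. The necessity half and the $0$-extension step are essentially complete as written, and the genericity bookkeeping (rank is maximized on a Zariski-open set, so it suffices to exhibit one infinitesimally rigid placement) is handled correctly in spirit.

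As a proof, however, the proposal has a genuine gap exactly where you flag it, and flagging it does not fill it. The heart of the sufficiency direction is the combinatorial lemma that a Laman graph with only degree-$3$ vertices available admits an admissible inverse $1$-extension: you say that if a candidate non-edge among the three neighbors is blocked by a tight subgraph, then ``tightness forces structure that makes another candidate admissible,'' but this is the actual content of Laman's theorem. The standard completion requires proving that two tight (i.e.\ $|E'|=2|V'|-3$) subgraphs sharing at least two vertices have tight union (a count/submodularity argument), and then deriving a contradiction when all three neighbor pairs are blocked, because the union of the blocking tight sets together with the degree-$3$ vertex violates the sparsity count; without this the induction does not close. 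A second, smaller gap: preservation of generic infinitesimal rigidity under a $1$-extension is not a ``direct calculation that the added rows compensate for the removed row at a generic placement.'' The usual argument specializes the new vertex onto the line through $u$ and $v$, verifies full rank at that special (non-generic) placement, and then invokes lower semicontinuity of rank to conclude for generic placements; as stated, your step is an assertion rather than a proof. Incidentally, the inductive machinery you would need here is close in flavor to the Henneberg~II analysis of \cite{BergJordan} that the paper does use, but for circuits rather than for Laman graphs, so nothing in the paper can be borrowed verbatim to close your lemma.
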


The {\em genericity} condition appearing in the statement of this theorem refers to the placements of the vertices. We'll introduce this concept rigorously in Section~\ref{sec:prelimCMideal}. For now we retain the most important consequence of the genericity condition, namely that {\em small perturbations of the vertex placements do not change the rigidity or flexibility properties of the framework.} This allows us to refer to the rigidity and flexibility of a (generic) framework solely in terms of its underlying graph. 

A graph satisfying the conditions of Laman's theorem is called a {\em Laman graph}. It is {\em minimally rigid} in the sense that it has just enough edges to be rigid: if one edge is removed, it becomes {\em flexible.} Adding extra edges to a Laman graph keeps it rigid, but the minimality is lost. Such graphs are said to be rigid and {\em overconstrained}. In short, for a graph to be rigid, its vertex set must span a Laman graph; otherwise the graph is flexible.

\subparagraph{Matroids. } A matroid is an abstraction capturing (in)dependence relations among collections of elements from a {\em ground set}, and is inspired by both {\em linear} dependencies (among, say, rows of a matrix) and by {\em algebraic} constraints imposed by algebraic equations on a collection of otherwise free variables. The standard way to specify a matroid is via its {\em independent sets}, which have to satisfy certain axioms \cite{Oxley:2011} (skipped here, since they are not relevant for our presentation). A {\em base} is a maximal independent set and a set which is not independent is said to be {\em dependent}. A minimal dependent set is called a {\em circuit}. Relevant for our purposes are the following general aspects: (a) (hereditary property) a subset of an independent set is also independent; (b) all bases have the same cardinality, called the {\em rank} of the matroid. Further properties will be introduced in context, as needed.

In this paper we encounter three types of matroids: a {\em graphic matroid}, defined on a ground set given by all the edges $E_n:=\{ij: 1\leq i < j \leq n\}$ of the complete graph $K_n$; this is the {\em $(2,3)$-sparsity matroid} or the {\em generic rigidity matroid} described below; a {\em linear matroid}, defined on an isomorphic set of {\em row vectors} of the {\em rigidity matrix} associated to a bar-and-joint framework; 
and an {\em algebraic matroid},  defined on an isomorphic ground set of variables $X_n:=\{x_{ij}: 1\leq i < j \leq n\}$; this is the {\em algebraic matroid associated to the Cayley-Menger ideal} and will be defined in Section~\ref{sec:prelimCMideal}. 

\subparagraph{The $(2,3)$-sparsity matroid: independent sets, bases, circuits.}
The $(2,3)$-sparse graphs on $n$ vertices form the collection of independent sets for a matroid $\smat{n}$ on the ground set $E$ of edges of the complete graph $K_n$ \cite{whiteley:Matroids:1996}, called the (generic) {\em 2D rigidity matroid}. The bases of the matroid $\smat n$ are the maximal independent sets, hence the Laman graphs. A set of edges which is not sparse is a {\em dependent} set. For instance, adding one edge to a Laman graph creates a dependent set of $2n-2$ edges, called a Laman-plus-one graph (Fig.~\ref{fig:lamanAndCircuits}). 

\begin{figure}[ht]
	\centering
		\includegraphics[width=.24\textwidth]{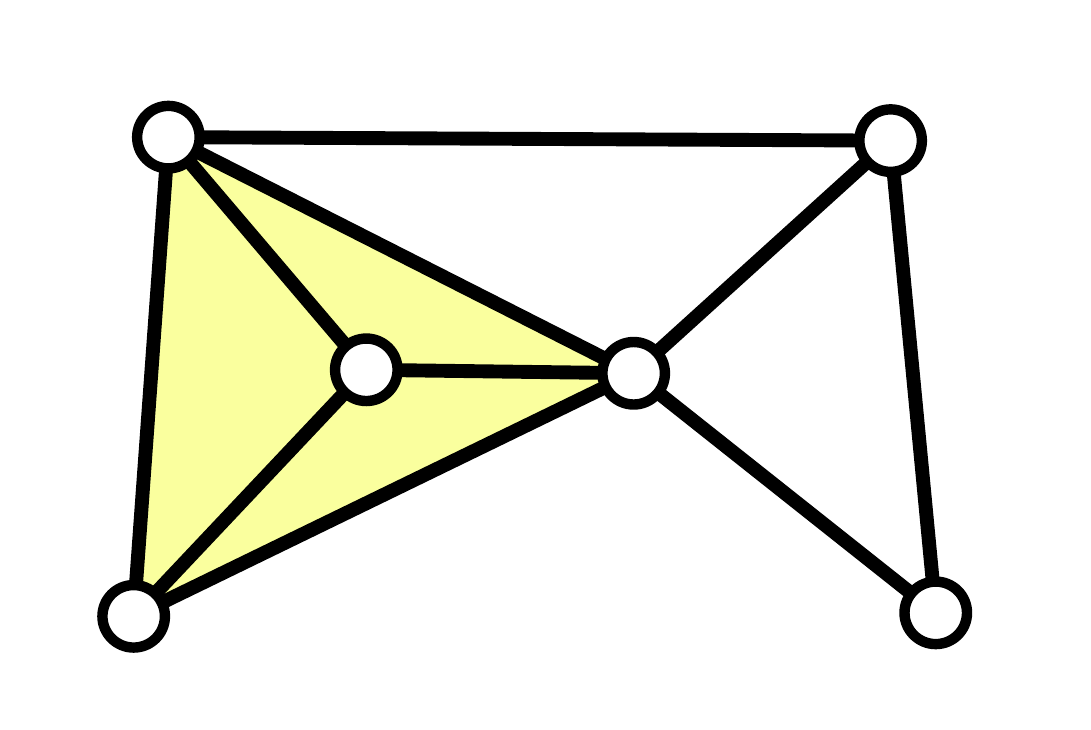}
		\includegraphics[width=.24\textwidth]{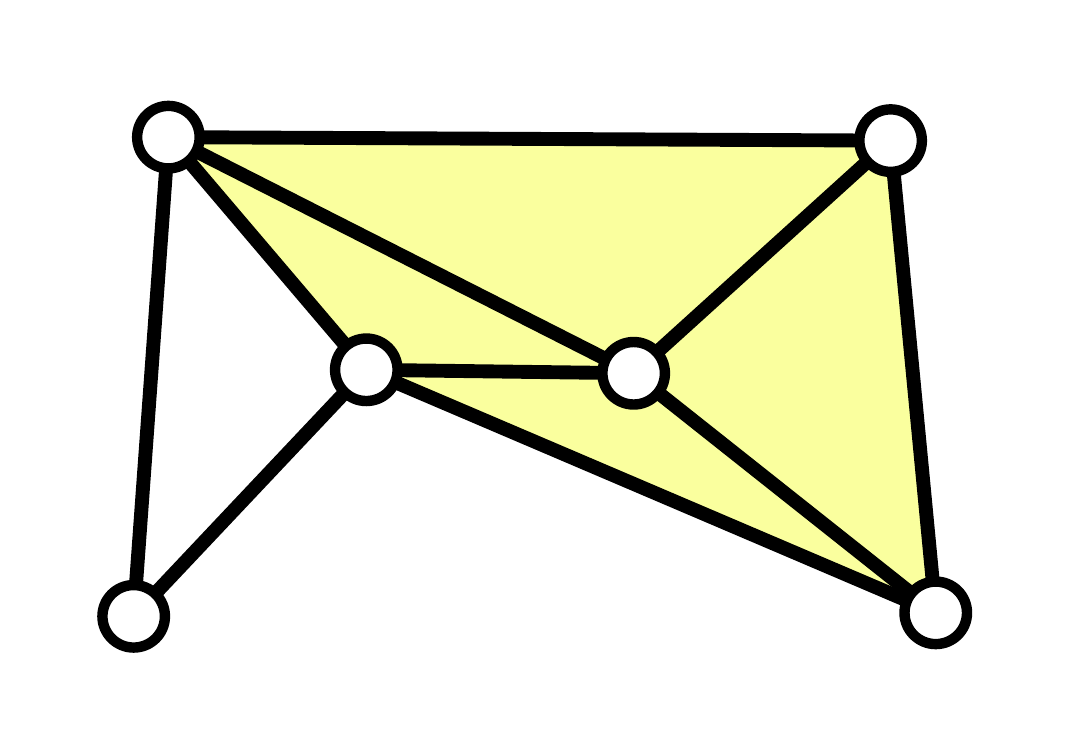}
		\includegraphics[width=.24\textwidth]{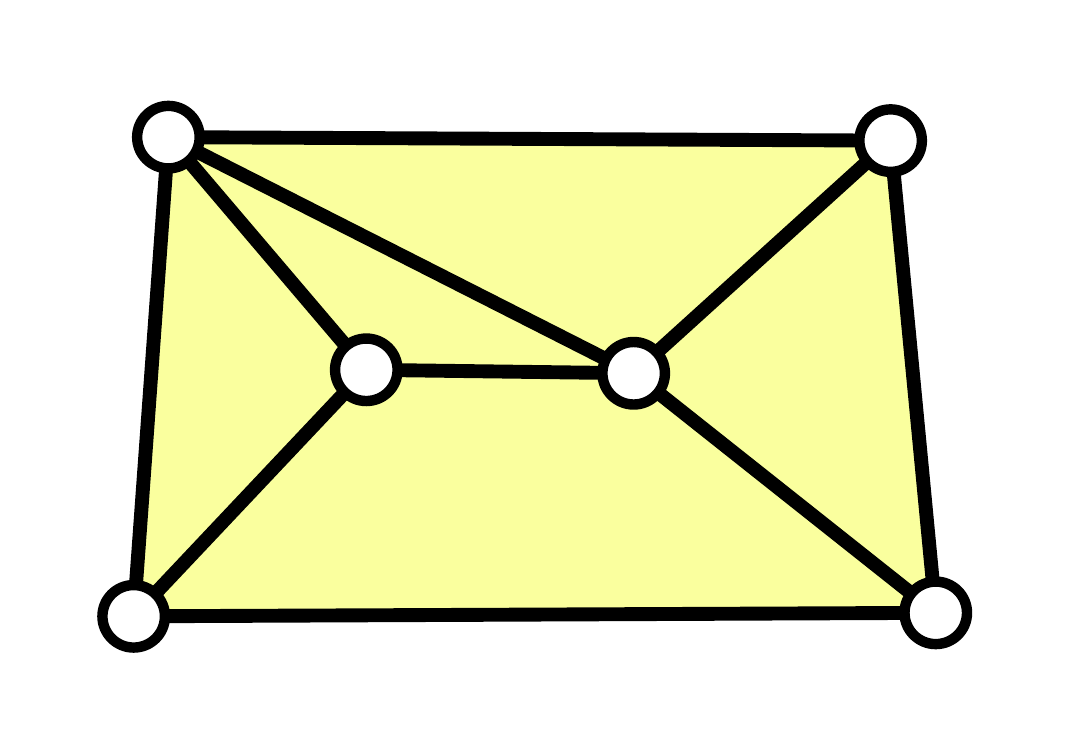}
	\caption{A Laman-plus-one graph contains a unique circuit (highlighted): (Left two) The circuit is not spanning the entire vertex set. (Right) A spanning circuit. 
	}
	\label{fig:lamanAndCircuits}
\end{figure}

A {\em minimal} dependent set is a (sparsity) {\em circuit}. The edges of a circuit span a subset of the vertices of $V$. A circuit spanning $V$ is said to be a {\em spanning} or {\em maximal} circuit in the sparsity matroid $\smat n$. See Fig.~\ref{fig:6circuits} for examples. 

\begin{figure}[ht]
	\centering
	\includegraphics[width=0.24\textwidth]{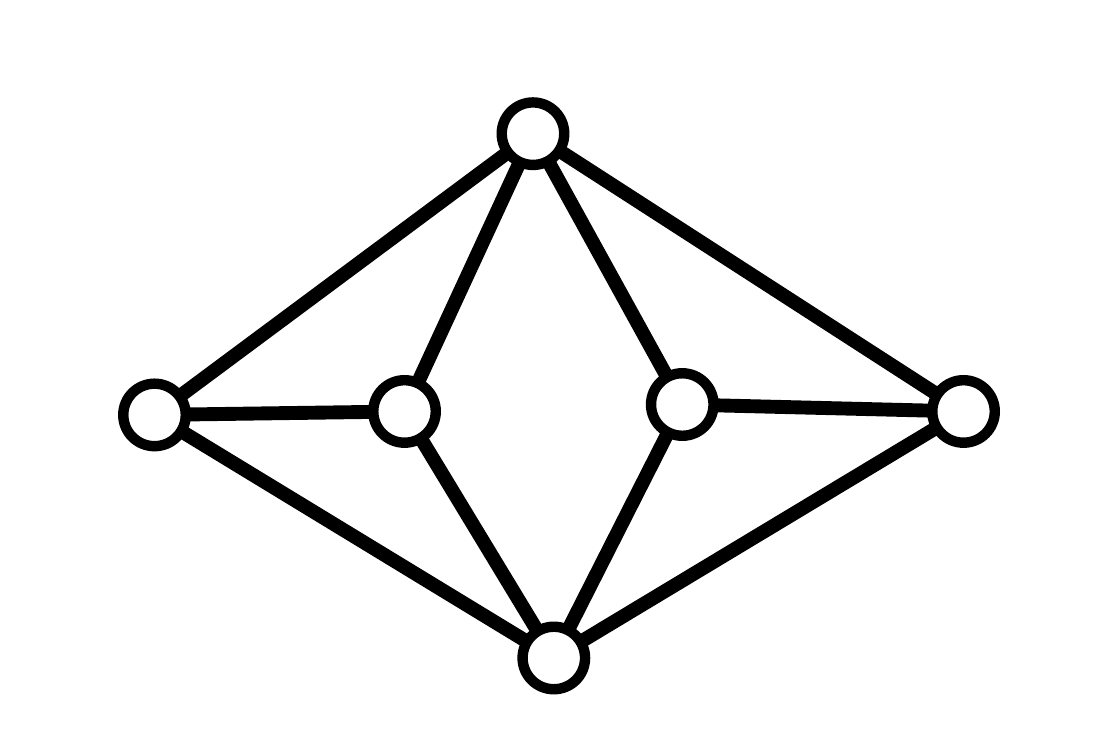}
	\includegraphics[width=0.24\textwidth]{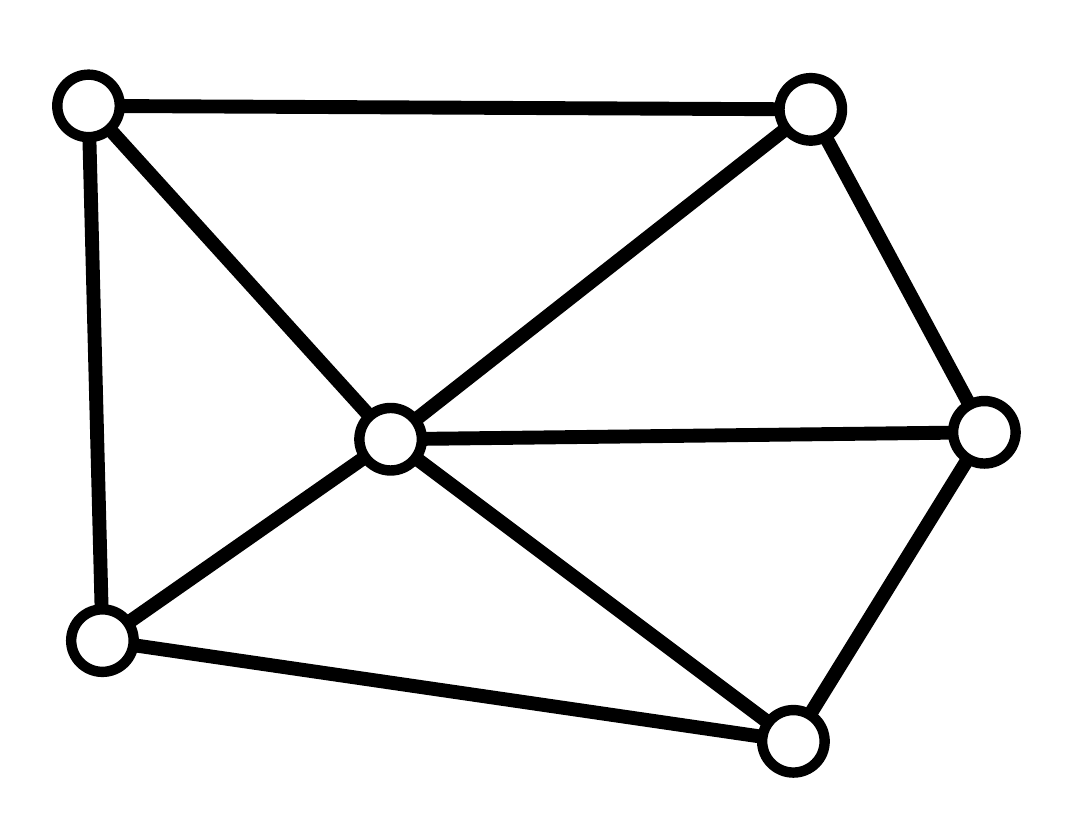}
	\includegraphics[width=0.24\textwidth]{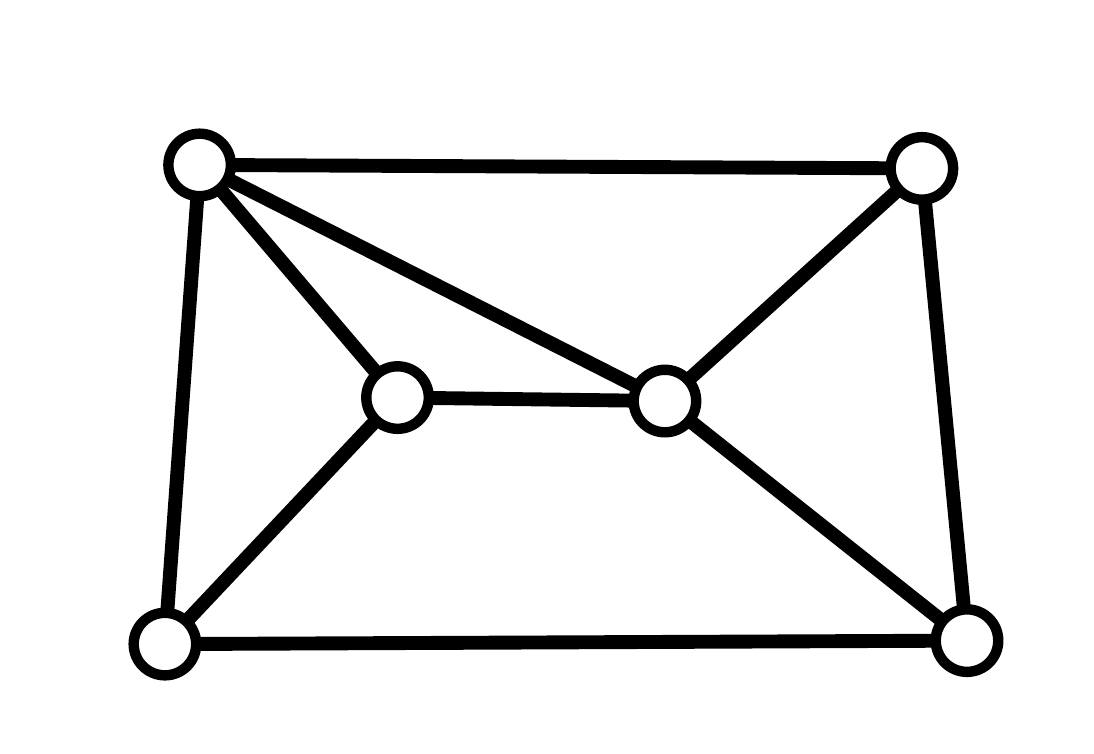}
	\includegraphics[width=0.24\textwidth]{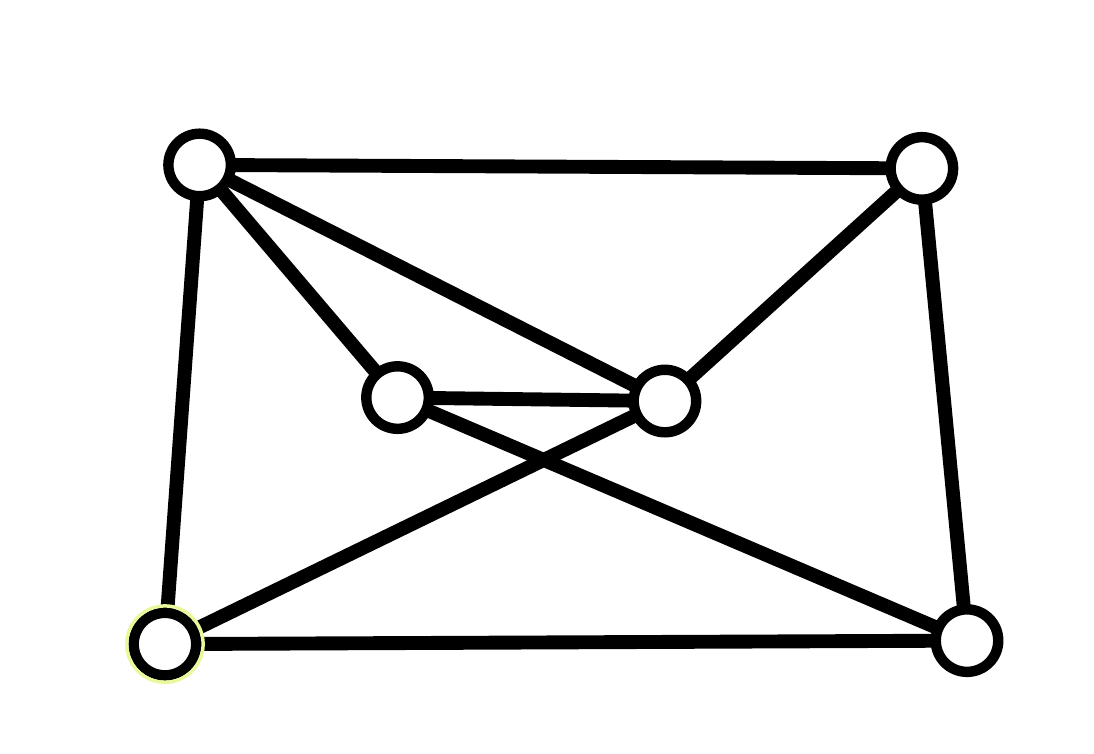}
	\caption{The four types of circuits on $n=6$ vertices: 2D {\em double-banana}, {\em $5$-wheel} $W_5$, {\em Desargues-plus-one} and $K_{3,3}$-plus-one.}
	\label{fig:6circuits}
\end{figure}

A {\em Laman-plus-one} graph contains a unique subgraph which is {\em minimally dependent}, in other words, a unique circuit. A spanning rigidity circuit $C=(V,E)$ is a special case of a Laman-plus-one graph: it has a total of $2n-2$ edges but it satisfies the $(2,3)$-sparsity condition on all proper subsets of at most $n'\leq n-1$ vertices. Simple sparsity considerations can be used to show that the removal of {\em any} edge from a circuit results in a Laman graph.

\begin{figure}[ht]
	\centering
	\includegraphics[width=0.3\textwidth]{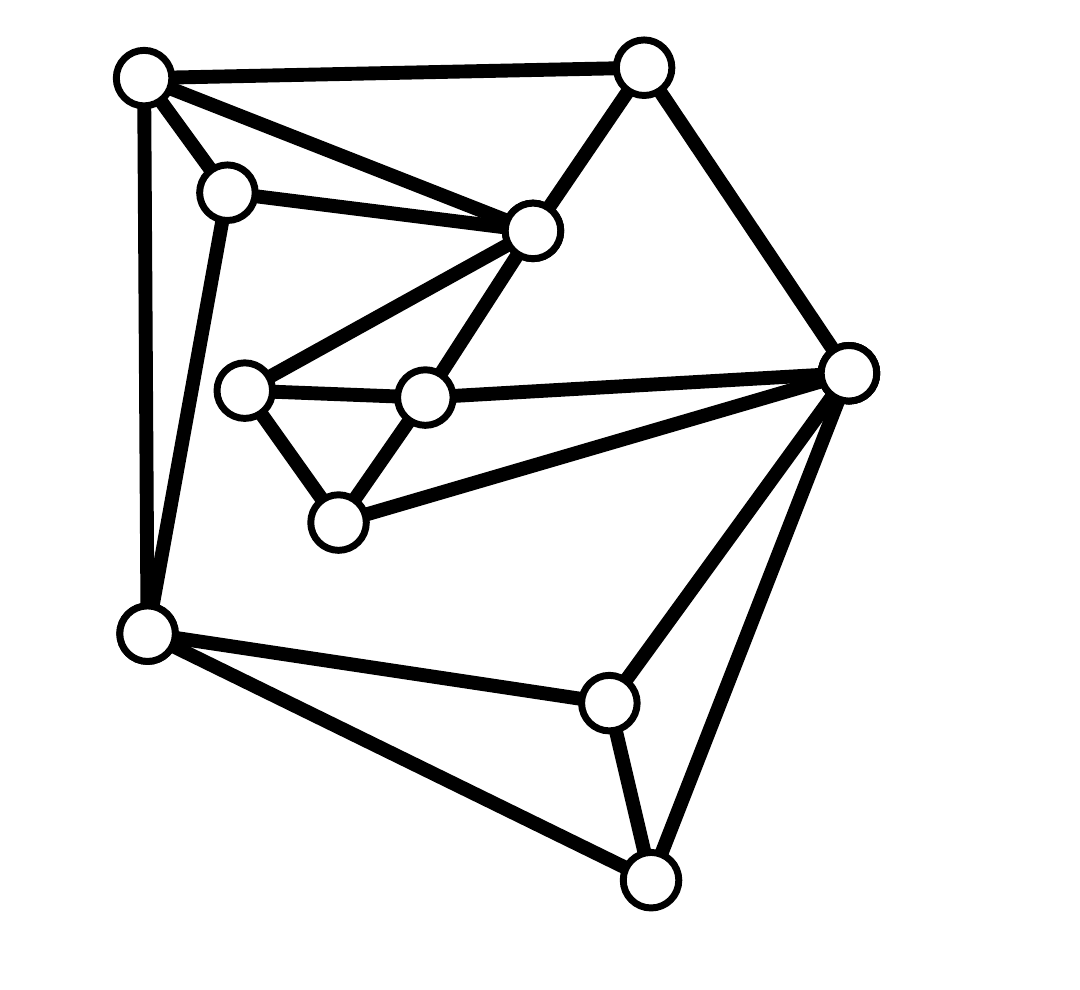}
	\includegraphics[width=0.3\textwidth]{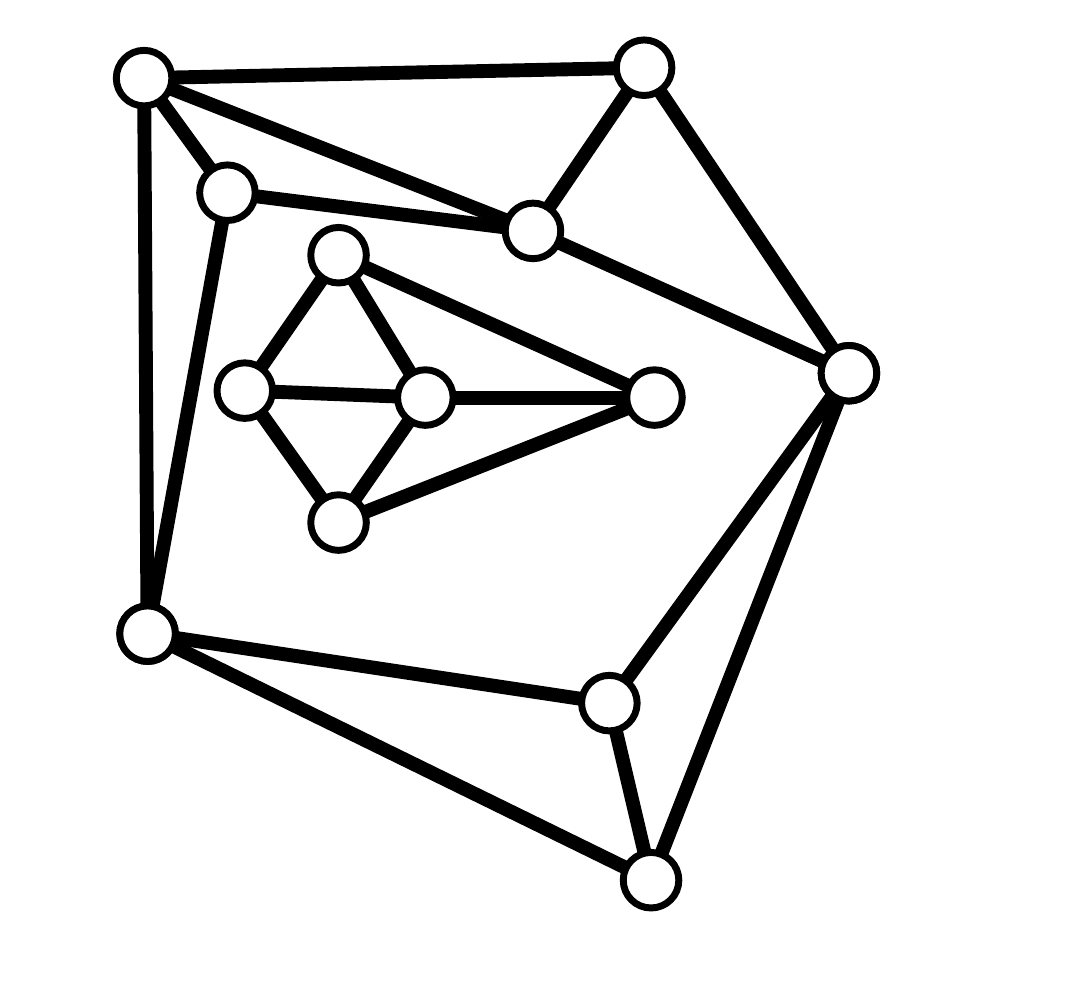}
	\includegraphics[width=0.3\textwidth]{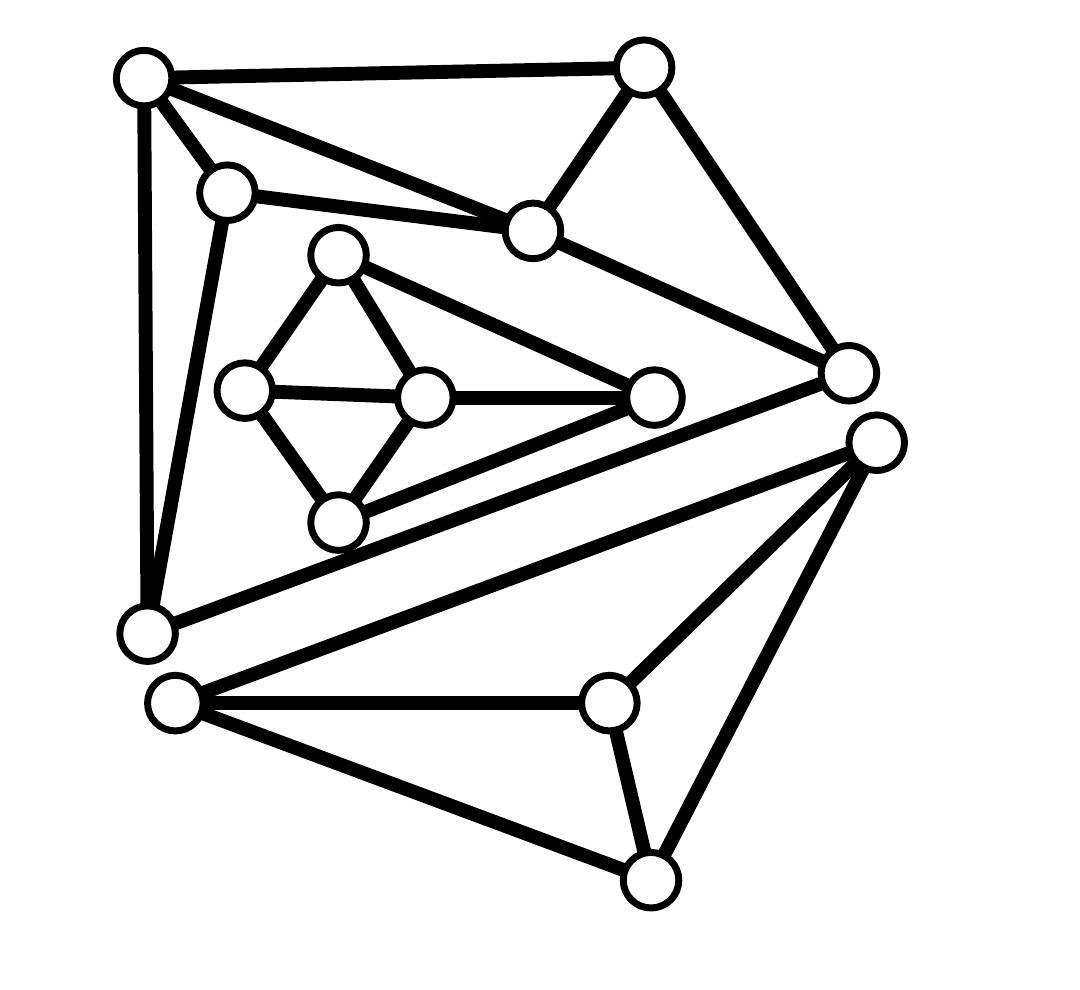}
	\caption{Splitting a $2$-connected circuit into $3$-connected circuits via inverse of two-sum operations.}
	\label{fig:2sum}
\end{figure}

\subparagraph{Operations on circuits.} If $G_1$ and $G_2$ are two graphs, we use a consistent notation for their number of vertices and edges $n_i=|V(G_i)|$, $m_i=|E(G_i)|$, $i=1,2$, and for their union and intersection of vertices and edges, as in $V_{\cup}=V(G_1)\cup V(G_2)$, $V_{\cap}=V(G_1)\cap V(G_2)$, $n_{\cup}=|V_{\cup}|$, $n_{\cap}=|V_{\cap}|$ and similarly for edges, with $m_{\cup}=|E_{\cup}|$ and $m_{\cap}=|E_{\cap}|$. 
Let $C_1$ and $C_2$ be two circuits with exactly one common edge $uv$. Their $2$-sum is the graph $C=(V,E)$ with $V=V_{\cup}$ and $E=E_{\cup} \setminus \{uv\}$. The inverse operation of splitting $C$ into $C_1$ and $C_2$ is called a $2$-split (Fig.~\ref{fig:2sum}).

\begin{lemma}
	\label{lem:twoSum}
	Any $2$-sum of two circuits is a circuit. Any 2-split of a circuit is a pair of circuits.
\end{lemma}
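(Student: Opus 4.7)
The plan is to verify the two defining properties of a $(2,3)$-sparsity circuit: the tight edge count $|E|=2|V|-2$, and the sparsity bound $|E(V')|\le 2|V'|-3$ on every proper vertex subset $V'\subsetneq V$. I will use $n_i=|V(C_i)|$, $m_i=|E(C_i)|$, and the convention, implicit in the definition of the $2$-sum for rigidity circuits, that the glued circuits share exactly the two endpoints of the common edge, i.e.\ $V_\cap=\{u,v\}$ and $E_\cap=\{uv\}$.

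For the $2$-sum direction, a direct count yields $|V(C)|=n_1+n_2-2$ and $|E(C)|=m_1+m_2-2=(2n_1-2)+(2n_2-2)-2=2|V(C)|-2$, matching the circuit edge count. For the sparsity bound, take $V'\subsetneq V$ and set $V'_i=V'\cap V_i$. Since $V'\ne V$, at least one $V'_i$ is a proper subset of $V_i$, so the circuit property of $C_i$ gives $|E'_i|\le 2|V'_i|-3$ on that side; on the other, either the same bound applies or $V'_j=V_j$ with $|E'_j|=m_j=2|V_j|-2$. One then writes $|E(V')|=|E'_1|+|E'_2|$, subtracting $2$ when $\{u,v\}\subseteq V'$ to account for $uv$ being listed in each $E'_i$ but absent from $E(C)$. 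A short case split (whether each $V'_i$ is proper, whether $\{u,v\}\subseteq V'$) yields $|E(V')|\le 2|V'|-3$ in every case, proving $C$ is a circuit.

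For the $2$-split direction, suppose $C$ is a circuit and the split produces $C_i=(V_i,E_i\cup\{uv\})$ with $V_1\cap V_2=\{u,v\}$ and $E(C)=E_1\sqcup E_2$. Since both $V_i\subsetneq V(C)$ (assuming a non-trivial split), the sparsity of $C$ on these proper subsets forces $|E_i|\le 2|V_i|-3$. Combined with the total edge count $|E_1|+|E_2|=|E(C)|=2|V(C)|-2=2|V_1|+2|V_2|-6$, equality $|E_i|=2|V_i|-3$ is forced, so each $C_i$ has the correct circuit edge count $2|V_i|-2$. For sparsity on $V''\subsetneq V_i$ inside $C_i$: if $\{u,v\}\not\subseteq V''$, the induced edge sets of $V''$ in $C_i$ and in $C$ coincide and the bound transfers directly; if $\{u,v\}\subseteq V''$, then $V'''=V''\cup V_{3-i}$ is still a proper subset of $V(C)$, and combining the sparsity bound on $V'''$ with the already-established count $|E_{3-i}|=2|V_{3-i}|-3$ yields $|E_{C_i}(V'')|\le 2|V''|-3$.

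The main obstacle is the bookkeeping around the special edge $uv$, which lives in both $C_1$ and $C_2$ but not in the $2$-sum $C$, so any sparsity inequality involving a subset containing both $u$ and $v$ requires an explicit correction term. The $2$-split sparsity argument is also slightly more delicate because the bound on $V''\subseteq V_i$ containing $\{u,v\}$ must be derived by enlarging $V''$ to a proper subset of $V(C)$, an argument that depends on the non-triviality of the split (i.e.\ $V_{3-i}\setminus\{u,v\}\ne\emptyset$).
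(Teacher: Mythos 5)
Your proposal is correct and follows essentially the same sparsity-counting approach as the paper: the paper verifies the $2n-2$ edge count for the $2$-sum and defers the subset-sparsity details and the $2$-split direction to Berg and Jordan (Lemmas 4.1, 4.2), while you simply carry out those counting arguments explicitly, including the correct $\pm 2$ bookkeeping for the shared edge $uv$ and the enlargement trick $V''\cup V_{3-i}$ for the split direction.
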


\begin{proof}\
From sparsity consideration, we prove that the $2$-sum has $2n-2$ edges (on $n$ vertices), and $(2,3)$-sparsity is also maintained on subsets. Indeed, the total number of edges in the $2$-sum is $m_1 + m_2 - 2 = 2n_1 - 2 + 2n_2 - 2 - 2 = 2(n_1 + n_2 -2) - 2 = 2 n - 2$. 

See also \cite{BergJordan}, Lemmas 4.1, 4.2.
\end{proof}

Note that not every circuit admits a $2$-separation, e.g.\ the Desargues-plus-one circuit does not admit a $2$-separation. However every circuit that is not $3$-connected does admit a $2$-separation (see also Lemma 2.4(c) in \cite{BergJordan}).

\subparagraph{Connectivity.} It is well known and easy to show that a circuit is always a $2$-connected graph. {\em If a circuit is not $3$-connected,}  we refer to it simply as a {\em $2$-connected circuit}.  The  Tutte decomposition \cite{tutte:connectivity:1966} of a $2$-connected graph into $3$-connected components amounts to identifying separating pairs of vertices. For a circuit, the separating pairs induce $2$-splits (inverse of $2$-sum) operations and produce smaller circuits. Thus a $2$-connected circuit can be constructed from $3$-connected circuits via $2$-sums (Fig.~\ref{fig:2sum}).

\begin{figure}[th]
	\centering
	\includegraphics[width=0.3\textwidth]{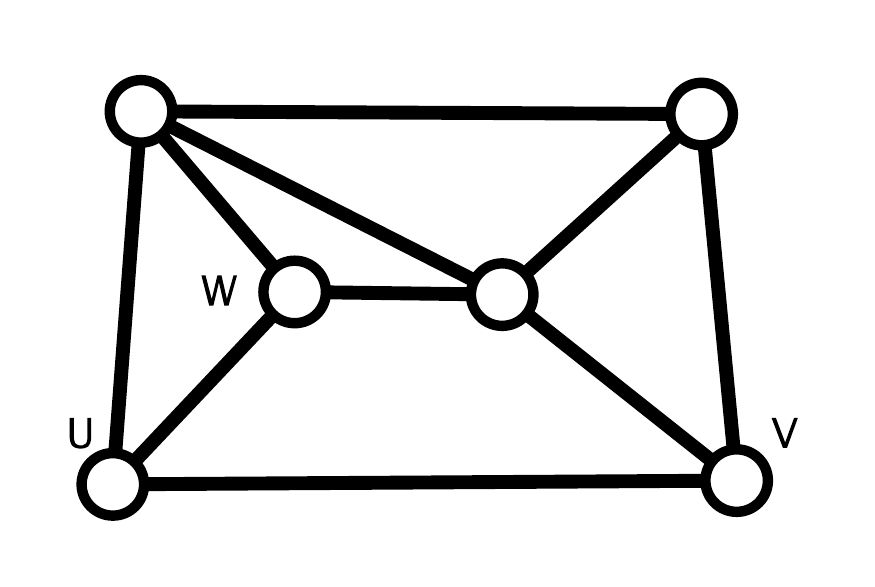}
	\includegraphics[width=0.33\textwidth]{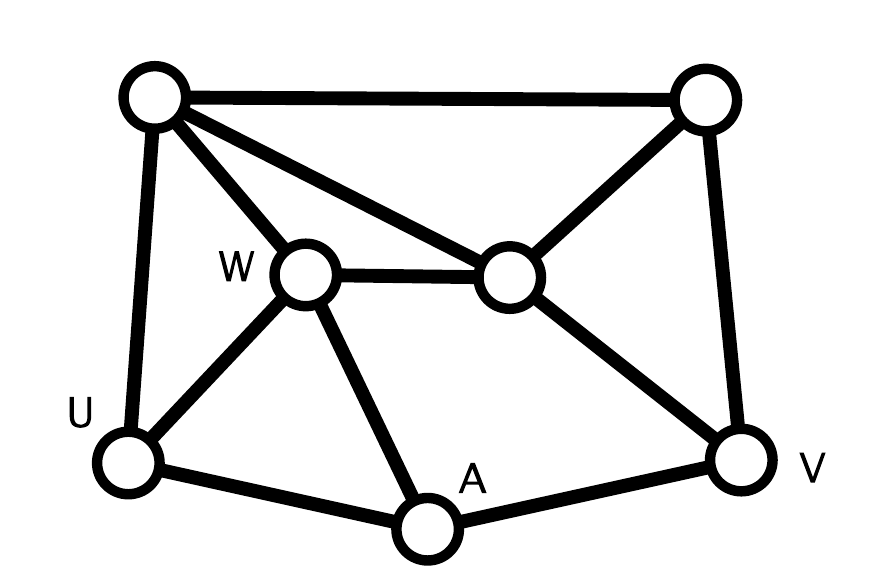}
	\caption{A Henneberg II extension of the Desargues-plus-one circuit.}
	\label{fig:hennebergII}
\end{figure}

\subparagraph{Inductive constructions for $3$-connected  circuits.} A {\em Henneberg II} extension (also called an {\em edge splitting} operation) is defined for an edge $uv$ and a non-incident vertex $w$, as follows: the edge $uv$ is removed, a new vertex $a$ and three new edges $au, av, aw$ are added. Berg and Jordan \cite{BergJordan} have shown that, if $G$ is a $3$-connected circuit, then a Henneberg II extension on $G$ is also a $3$-connected circuit. The {\em inverse Henneberg II} operation on a circuit removes one vertex of degree $3$ and adds a new edge among its three neighbors in such a way that the result is also a circuit. Berg and Jordan have shown that every $3$-connected circuit admits an inverse Henneberg II operation which also maintains $3$-connectivity. As a consequence, a $3$-connected circuit has an {\em inductive construction}, i.e. it can be obtained from $K_4$ by Henneberg II extensions that maintain $3$-connectivity. Their proof is based on the existence of two non-adjacent vertices with $3$-connected inverse Henneberg II circuits. We will make use in Section~\ref{sec:combRes} of the following weaker result, which does not require the maintenance of $3$-connectivity in the inverse Henneberg II operation.

\begin{lemma}[Theorem 3.8 in \cite{BergJordan}]
	\label{thm:BergJordan}Let $G=(V,E)$ be a $3$-connected circuit with $|V|\geq 5$. Then $G$ either has four vertices that admit an inverse Henneberg II that is a circuit, or $G$ has three pairwise non-adjacent vertices that admit an inverse Henneberg II that is a circuit (not necessarily $3$-connected).
\end{lemma}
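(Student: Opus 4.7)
The plan is to combine an edge-count argument with a matroidal analysis of inverse Henneberg II admissibility at degree-$3$ vertices. Since $G$ is a rigidity circuit on $n$ vertices with exactly $2n-2$ edges, the handshake identity gives $\sum_{v} \deg(v) = 4n - 4$, and $3$-connectivity forces $\deg(v) \geq 3$ everywhere. A short averaging argument then shows that $G$ has at least four vertices of degree exactly $3$; these form the candidate pool for the inverse Henneberg II operations.

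For a degree-$3$ vertex $v$ with $N(v) = \{a,b,c\}$, an inverse Henneberg II at $v$ deletes $v$ together with its three incident edges and inserts a new edge $xy$ between two of $a,b,c$ that is not already present. The resulting graph $G'$ has $2(n-1)-2$ edges, the correct count to be a circuit on $n-1$ vertices. The sparsity check reduces to the following: $xy$ is \emph{admissible} if no proper subset $S \subseteq V \setminus \{v\}$ with $\{x,y\} \subseteq S$ is already $(2,3)$-tight in $G$, since otherwise adding $xy$ would create a proper subset spanning $2|S|-2$ edges, violating $(2,3)$-sparsity and hence the circuit property.

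The central local step is to show that at every degree-$3$ vertex $v$ in a $3$-connected circuit with $n \geq 5$, at least one of the (up to three) candidate non-edges among $N(v)$ is admissible. The argument exploits submodularity of the $(2,3)$-sparsity counting function: if two candidates $xy$ and $x'y'$ at $v$ were both blocked by tight sets $S_1$ and $S_2$, then $S_1 \cup S_2$ would also be tight, and combined with the neighborhood structure around $v$ this would expose a separator of size at most $2$ in $G$, contradicting $3$-connectivity. Iterating over the three candidate non-edges rules out their simultaneous blocking in most configurations, proving local admissibility.

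The final step is the dichotomy. If the local lemma applies to at least four of the $\geq 4$ degree-$3$ vertices, we directly obtain the first alternative. The main obstacle, and the reason for the weaker second alternative, is the boundary case where local admissibility fails at some degree-$3$ vertices because their neighborhoods share common blocking tight sets. I expect to handle this by a careful analysis of the subgraph induced by the ``bad'' degree-$3$ vertices: $3$-connectivity together with $(2,3)$-sparsity on proper subsets should force enough independence among the surviving admissible degree-$3$ vertices to locate three pairwise non-adjacent ones, yielding alternative (2). This adjacency/non-adjacency split, anchored in the matroidal structure of $\smat{n}$, is the most delicate combinatorial ingredient of the proof.
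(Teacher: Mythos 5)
The statement you are proving is not proved in the paper at all: it is quoted verbatim from Berg and Jordan (their Theorem~3.8), and its proof there is a substantial multi-lemma argument. Your opening steps are fine: a circuit on $n$ vertices has $2n-2$ edges, so the handshake count with minimum degree $3$ gives at least four vertices of degree exactly $3$, and admissibility of a candidate pair $xy \subseteq N(v)$ is correctly characterized by the absence of a proper $(2,3)$-tight set of $G-v$ containing both $x$ and $y$. The genuine gap is your ``central local step,'' the claim that at \emph{every} degree-$3$ vertex of a $3$-connected circuit some candidate pair is admissible. First, the submodularity argument you invoke does not close: two blocking tight sets $S_1 \ni a,b$ and $S_2 \ni b,c$ force a tight union only when $|S_1 \cap S_2| \geq 2$; if they meet in the single vertex $b$ the union count is $2|S_1\cup S_2|-4$, no tight set and no small separator arises, and $3$-connectivity is not contradicted. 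Second, the claim cannot be true in the generality you need: if every degree-$3$ vertex admitted an inverse Henneberg II yielding a circuit, then the first alternative of the theorem would always hold (there are always at least four such vertices) and the second alternative would be vacuous --- the dichotomy exists precisely because $3$-connected circuits can contain non-admissible (``blocked'') degree-$3$ nodes, which is what makes Berg and Jordan's proof hard.

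Consequently the final step of your plan, where blocked vertices are handled by ``a careful analysis'' that ``should force'' three pairwise non-adjacent admissible vertices, is exactly the content of Berg and Jordan's argument (their structural analysis of maximal tight sets blocking neighbor pairs), and it is left here as a statement of intent rather than a proof. As written, the proposal establishes the easy counting facts but not the theorem; either carry out that structural analysis in full or, as the paper does, cite Berg and Jordan's Theorem~3.8 as a known result.
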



\section{Combinatorial Resultants}
\label{sec:combRes}

We define now the {\em combinatorial resultant} operation on two graphs, prove Theorem~\ref{thm:combResConstruction} and describe its algorithmic implications. 

\begin{figure}[ht]
	\centering
	\includegraphics[width=0.24\textwidth]{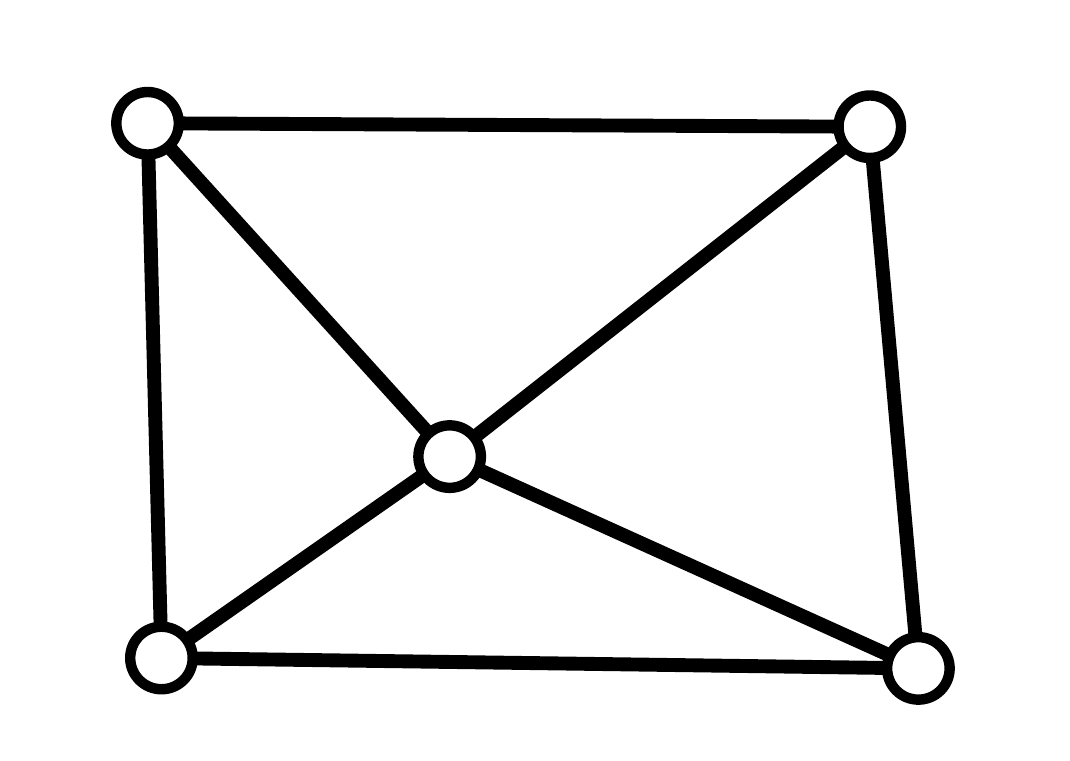}
	\includegraphics[width=0.24\textwidth]{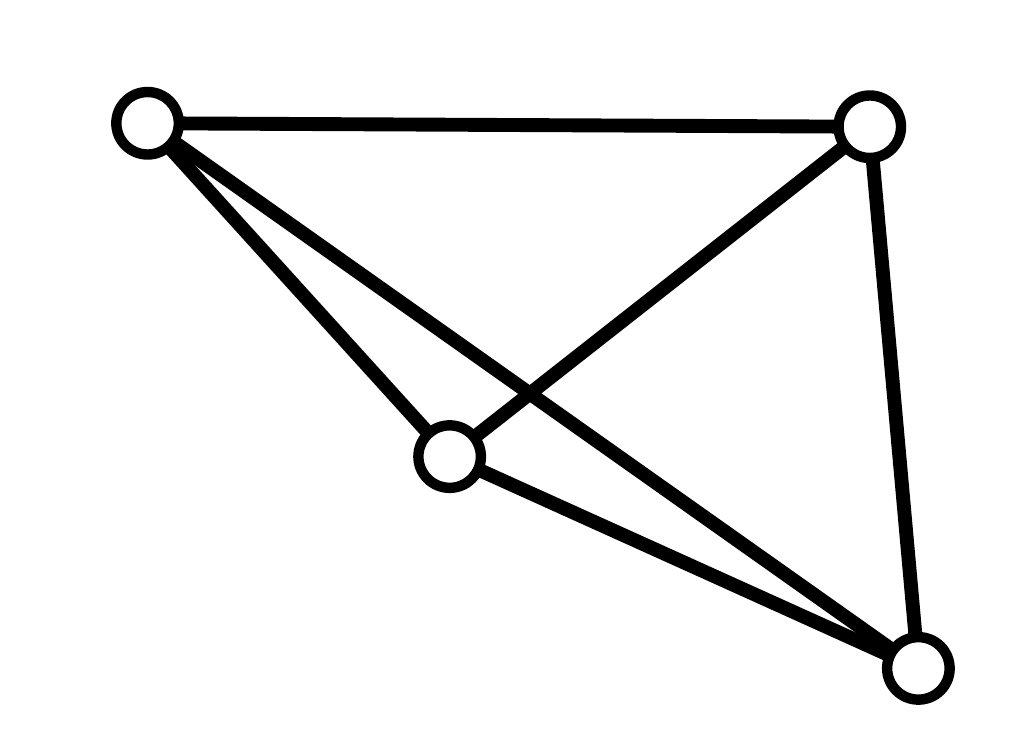}
	\includegraphics[width=0.24\textwidth]{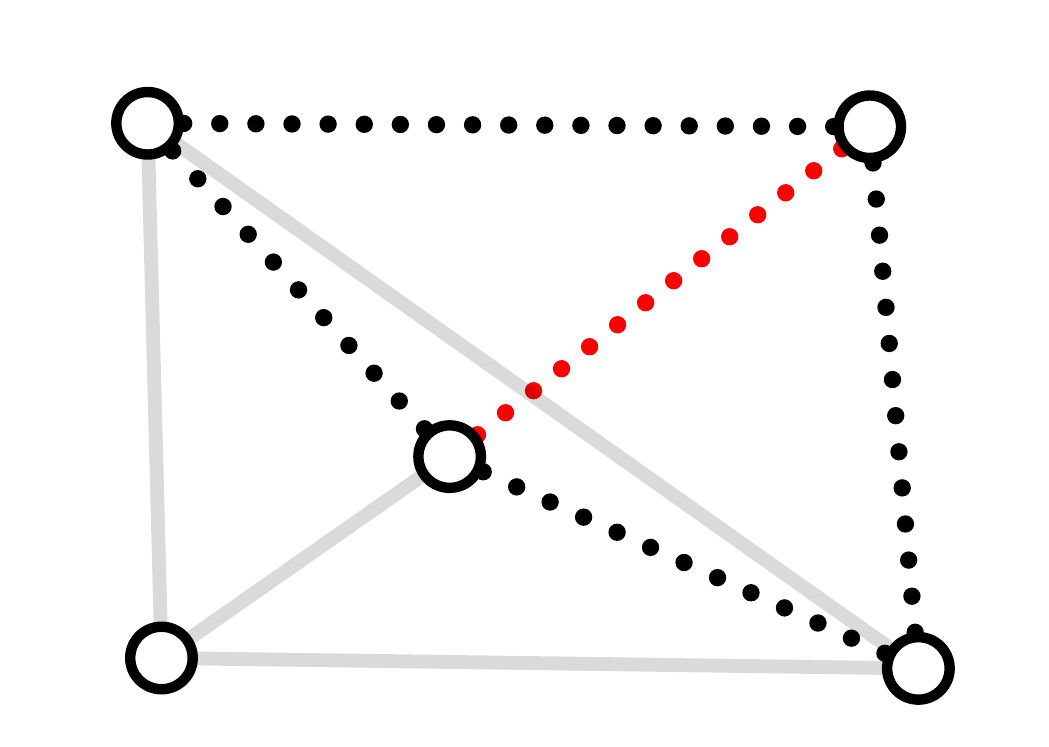}
	\includegraphics[width=0.24\textwidth]{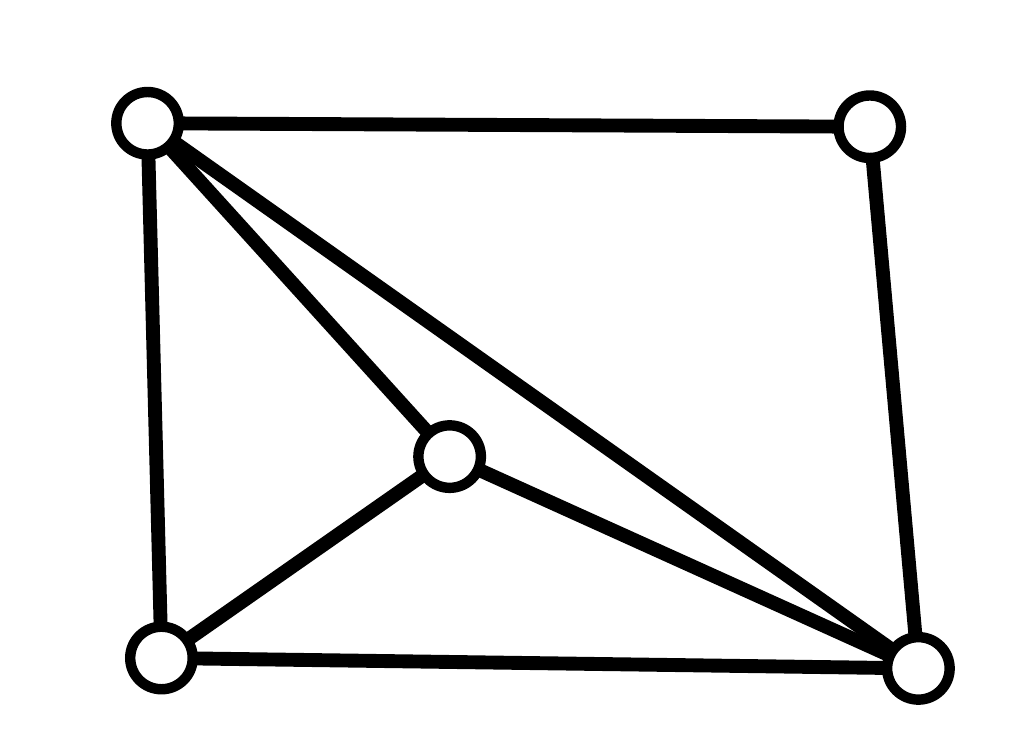}	
	\caption{A $4$-wheel $W_4$, a complete $K_4$ graph,  their common  Laman graph (dotted, with red elimination edge) and their combinatorial resultant, which is a Laman-plus-one graph but not a circuit.}
	\label{fig:graphW5K4}
\end{figure}

\subparagraph{Combinatorial resultant.}
	Let $G_1$ and $G_2$ be two distinct graphs with non-empty intersection $E_{\cap} \neq\emptyset$ and let $e\in E_{\cap}$ be a common edge. The {\em combinatorial resultant} of $G_1$ and $G_2$ on the {\em elimination edge} $e$ is the graph $\cres{G_1}{G_2}{e}$ with vertex set $V_{\cup}$ and edge set $E_{\cup}\setminus\{e\}$.

The 2-sum appears as a special case of a combinatorial resultant when the two graphs have exactly one edge in common, which is eliminated by the operation. Circuits are closed under the $2$-sum operation, but they are not closed under general combinatorial resultants (Fig.~\ref{fig:graphW5K4}). We are interested in combinatorial resultants that produce circuits from circuits.

\subparagraph{Circuit-valid combinatorial resultant sequences.} Two circuits are said to be properly intersecting if their common subgraph is Laman. 

\begin{lemma}
	\label{lem:combRes2n2}
	The combinatorial resultant of two circuits has $m=2n-2$ edges iff the common subgraph $G_{\cap}$ of the two circuits is Laman.
\end{lemma}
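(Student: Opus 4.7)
The plan is to reduce the lemma to a purely numerical identity via edge- and vertex-counting, and then to argue that this identity is in fact equivalent to the Laman condition on $G_\cap$, because $G_\cap$ automatically inherits $(2,3)$-sparsity from sitting inside each of the two circuits.

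First I would unwind the definition of $\cres{C_1}{C_2}{e}$: by inclusion--exclusion it has $n = n_1 + n_2 - n_\cap$ vertices and, after removing the elimination edge, exactly $m = m_1 + m_2 - m_\cap - 1$ edges. Substituting $m_1 = 2n_1 - 2$ and $m_2 = 2n_2 - 2$, valid because both $C_1$ and $C_2$ are rigidity circuits, a one-line simplification gives
\[
m - (2n-2) \;=\; (2n_\cap - 3) - m_\cap ,
\]
so the condition $m = 2n - 2$ is equivalent to the numerical condition $m_\cap = 2n_\cap - 3$ on $G_\cap$. This already settles the backward direction, since a Laman graph on $n_\cap$ vertices has exactly $2n_\cap - 3$ edges.

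For the forward direction I would show that the count $m_\cap = 2n_\cap - 3$ is already enough to force $G_\cap$ to be Laman. The key observation is that $G_\cap$ is a subgraph of the circuit $C_1$, and a rigidity circuit is $(2,3)$-sparse on every proper subset of its vertex set. Hence for any $V' \subsetneq V_\cap$, the edges of $G_\cap$ spanned by $V'$ are contained in the edges of $C_1$ spanned by $V'$, and circuit-sparsity of $C_1$ gives the bound $|E_{G_\cap}(V')| \le 2|V'|-3$. Together with the assumed total count $m_\cap = 2n_\cap - 3$, this is exactly the Laman condition on $G_\cap$.

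The place where I expect to have to be careful — and which I view as the main obstacle — is the edge case $V_\cap = V(C_1)$ (or symmetrically $V_\cap = V(C_2)$), because then the proper-subset sparsity of $C_1$ does not directly apply to $V_\cap$ itself. I would resolve this by noting that $G_\cap$ must be a \emph{proper} subgraph of $C_1$: if $G_\cap = C_1$ as graphs, then $E(C_1) \subseteq E(C_2)$ and $V(C_1) \subseteq V(C_2)$, which contradicts the matroid axiom that no circuit properly contains another. Hence $m_\cap \le m_1 - 1 = 2n_\cap - 3$ even in this degenerate case, and the Laman argument goes through unchanged.
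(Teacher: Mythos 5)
Your proof is correct and takes essentially the same route as the paper: the identical inclusion--exclusion count reducing $m=2n-2$ to $m_{\cap}=2n_{\cap}-3$, followed by the observation that $G_{\cap}$ inherits $(2,3)$-sparsity from sitting properly inside a circuit. The only cosmetic difference is that the paper gets the sparsity from the hereditary property of the proper edge subset $E_{\cap}\subset E(C_1)$ (using, as you do in your edge case, that no circuit can contain another), whereas you invoke the proper-vertex-subset sparsity of $C_1$ directly; your edge-case discussion is harmless but not actually needed, since the count on $V_{\cap}$ itself is exactly the hypothesis $m_{\cap}=2n_{\cap}-3$.
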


\begin{figure}[ht]
	\centering
	\includegraphics[width=0.24\textwidth]{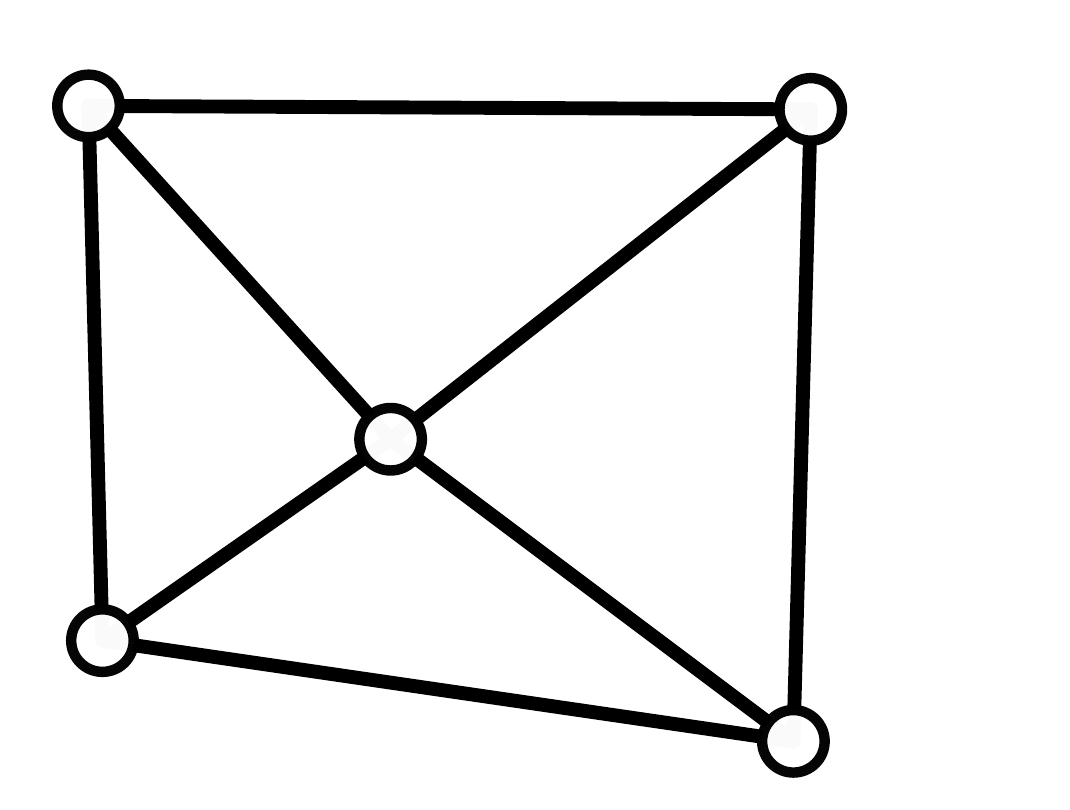}
	\hspace{-40pt}
	\includegraphics[width=0.24\textwidth]{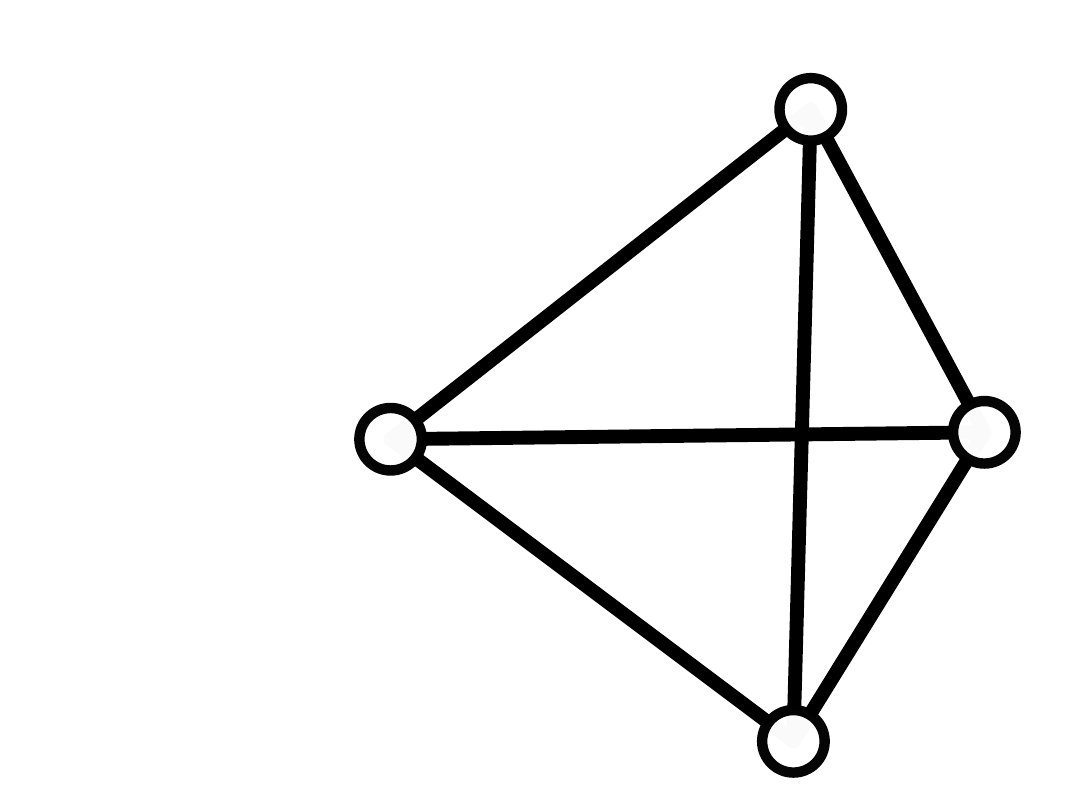}
	\includegraphics[width=0.24\textwidth]{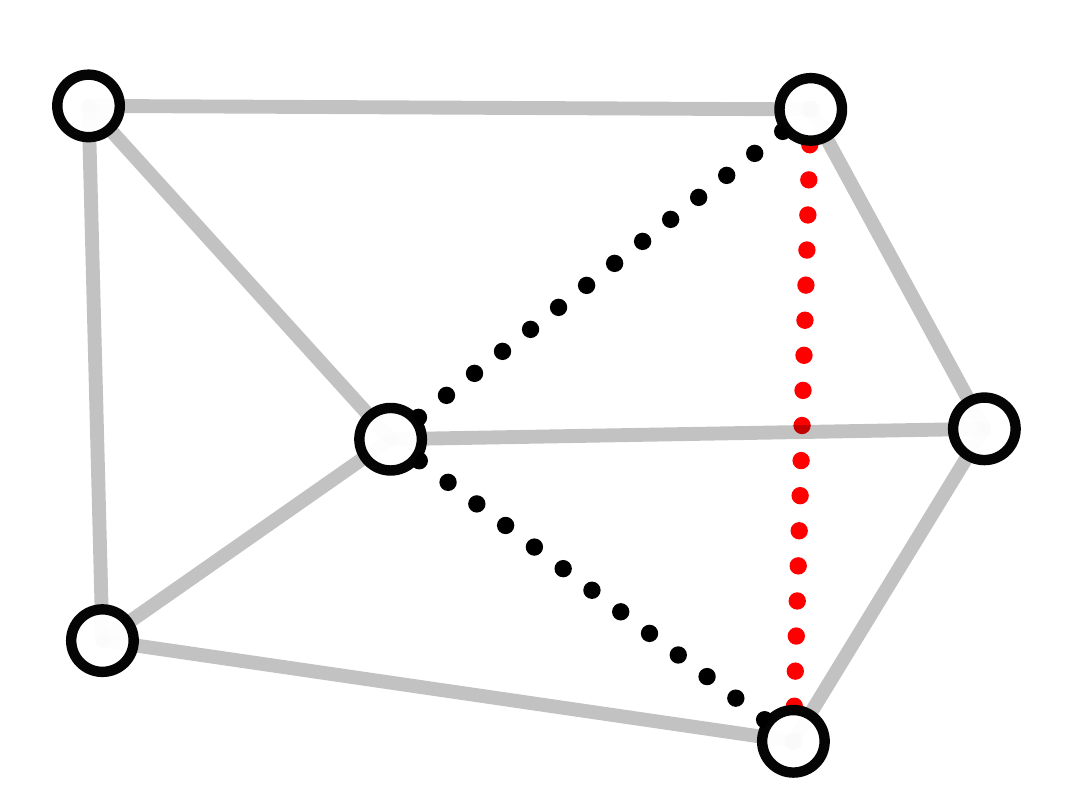}
	\includegraphics[width=0.24\textwidth]{w5.pdf}
	\caption{A $4$-wheel $W_4$ and a complete $K_4$ graph, their common  Laman graph (dotted, with red elimination edge) and their combinatorial resultant, the $5$-wheel $W_5$ circuit.}
	\label{fig:combResultant}
\end{figure}

\begin{proof} 
Let $C_1$ and $C_2$ be two circuits with $n_i$ vertices and $m_i$ edges, $i=1,2$, and let $C$ be their combinatorial resultant with $n$ vertices and $m$ edges.
By inclusion-exclusion $n=n_1+n_2-n_{\cap}$ and $ m = m_1 + m_2 - m_{\cap} - 1$.
Substituting here the values for $m_1=2n_1-2$ and $m_2=2n_2-2$, we get $ m = 2 n_1 - 2 + 2 n_2 - 2 - m_{\cap} - 1 = 2(n_1 + n_2 - n_{\cap}) -2 + 2 n_{\cap} - 3 - m_{\cap} = (2n-2) + (2 n_{\cap} - 3) - m_{\cap}$. We have $m = 2 n -2$ iff $m_{\cap} = 2 n_{\cap} - 3$. Since both $C_1$ and $C_2$ are circuits, it is not possible that one edge set be included in the other: circuits are minimally dependent sets of edges and thus cannot contain other circuits. As a proper subset of both $E_1=E(C_1)$ and $E_2=E(C_2)$, $E_{\cap}$  satisfies the hereditary $(2,3)$-sparsity property. If furthermore $G_{\cap}$ has exactly $2 n_{\cap} - 3$ edges, then it is Laman.
\end{proof}

\noindent
A combinatorial resultant operation applied to two properly intersecting circuits is said to be {\em circuit-valid} if it results in a spanning circuit. An example is shown in Fig.~\ref{fig:combResultant}. Being properly intersecting is a necessary but not sufficient condition for the combinatorial resultant of two circuits to produce a circuit (Fig.~\ref{fig:graphW5K4}).

\begin{problem}
Find necessary and sufficient conditions for the combinatorial resultant of two circuits to be a circuit.
\end{problem}

In Section \ref{sec:prelimRigidity} we have seen that a $2$-connected circuit can be obtained from $3$-connected circuits via $2$-sums. The proof of Theorem \ref{thm:combResConstruction} is completed by Proposition~\ref{prop:circuits3conn} below.

\begin{proposition}
	\label{prop:circuits3conn}
	Let $C=(V,E)$ be a $3$-connected circuit spanning $n+1\geq 5$ vertices. Then, in polynomial time, we can find two circuits $A$ and $B$ such that $A$ has $n$ vertices, $B$ has at most $n$ vertices and $C$ can be represented as the combinatorial resultant of $A$ and $B$.
\end{proposition}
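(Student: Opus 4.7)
The plan is to apply Lemma~\ref{thm:BergJordan} to locate a degree-$3$ vertex $v \in V(C)$ whose inverse Henneberg II yields a rigidity circuit, and then to construct the companion $B$ explicitly. Let $N(v)=\{x,y,z\}$ and let $e \in \{xy,xz,yz\}\setminus E(C)$ be the edge added by the inverse H2, so that $A = (C-v)\cup\{e\}$ is a circuit on $n$ vertices. Because $E(A) = (E(C)\setminus\{vx,vy,vz\}) \cup \{e\}$, the equation $\cres{A}{B}{e}=C$ forces $B$ to satisfy $v\in V(B)\subseteq V(C)$, $\{vx,vy,vz,e\}\subseteq E(B)$, and $E(B)\subseteq E(C)\cup\{e\}$.

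I split on $t=|\{xy,xz,yz\}\cap E(C)|$; note $t\le 2$, since $t=3$ would embed a $K_4$ as a proper dependent subgraph of $C$, contradicting minimality. In the favourable case $t=2$ I take $B$ to be the $K_4$ on $\{v,x,y,z\}$, a circuit on $4\le n$ vertices; a routine inclusion-exclusion then gives $(E(A)\cup E(B))\setminus\{e\}=E(C)$. In the case $t<2$ I construct $B$ by a single vertex deletion: pick a degree-$3$ vertex $u\in V(C)\setminus(\{v\}\cup N(v))$ and set $V(B)=V(C)\setminus\{u\}$ together with $E(B)=(E(C)\cap\binom{V(B)}{2})\cup\{e\}$. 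A standard degree-count argument shows $C$ has at least four degree-$3$ vertices, so either one of them already lies outside $\{v\}\cup N(v)$, or all four coincide with $\{v,x,y,z\}$; in the latter subcase Lemma~\ref{thm:BergJordan} lets me re-designate $v$ among the admissible vertices so that the new closed neighborhood misses at least one of the remaining degree-$3$ vertices. Since $\deg_C(u)=3$, we have $|V(B)|=n$ and $|E(B)|=2n-2$, and $E(B)\setminus\{e\}$ is a Laman graph on $V(B)$ as a $(2,3)$-sparse subset of $C$ with the saturated edge count $2n-3$.

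The hardest step will be verifying that the $B$ produced in the second case is itself a minimally dependent circuit, rather than a Laman-plus-one graph harbouring a strictly smaller circuit. Equivalently I must show that no proper $W\subsetneq V(B)$ containing both endpoints of $e$ saturates the $(2,3)$-sparsity bound in $C$; I expect this to follow from the Berg-Jordan admissibility of $v$ (which forces the fundamental circuit of $e$ in the Laman graph $C-v$ to span all of $V(C)\setminus\{v\}$) combined with the $3$-connectivity of $C$ (which rules out small saturated subsets containing both endpoints of $e$). If a particular $u$ fails this check, I retry with another degree-$3$ vertex, or pass to the unique fundamental circuit inside $B$ and verify that it still contains the spokes $\{vx,vy,vz\}$ on at most $n$ vertices. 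Once $B$ is validated, $\cres{A}{B}{e}=C$ follows by direct edge bookkeeping, and every step of the construction (pebble-game sparsity tests~\cite{streinu:lee:pebbleGames:2008} for admissibility, Hopcroft-Tarjan~\cite{hopcroft:tarjan:73} for $3$-connectivity, and $O(n)$ candidate $(v,u)$ trials) runs in polynomial time in $n$.
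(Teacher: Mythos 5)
The $t=2$ branch of your construction is correct (taking $B$ to be the $K_4$ on $\{v,x,y,z\}$ works, and is in fact what the paper's construction produces in that situation). The genuine gap is in the $t<2$ branch: there you define $B$ to be the \emph{entire} Laman-plus-one graph $D=(C-u)\cup\{e\}$ and must therefore prove that $D$ is itself minimally dependent --- and you explicitly leave this, ``the hardest step'', unproven. Nothing in your sketch establishes it: Berg--Jordan admissibility of $v$ says that $(C-v)\cup\{e\}$ is a circuit, i.e.\ the fundamental circuit of $e$ in the Laman graph $C-v$ is spanning; it says nothing about the fundamental circuit of $e$ in the \emph{different} Laman graph $C-u$, which is the graph relevant to your $B$. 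Likewise ``retry with another degree-$3$ vertex $u$'' is not a proof, since you give no argument that some admissible choice makes $D$ minimally dependent, and the proposition requires an unconditional construction.

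The fix is the paper's route, which your fallback sentence names but does not carry out: do not insist that $D$ be a circuit; instead let $B$ be the \emph{unique} circuit contained in the Laman-plus-one graph $D$, which exists and contains $e$. The crux is then to show that $B$ contains $v$ and all three spokes $vx,vy,vz$ --- these are exactly the edges of $C$ absent from $A$, so without them $\cres{A}{B}{e}\neq C$. This follows from a short minimality argument you are missing: if $v\notin B$, then $B\subseteq (C-u-v)\cup\{e\}\subseteq (C-v)\cup\{e\}=A$, and $B\neq A$ because $B$ avoids $u$ while $u$ is a vertex of the circuit $A$; thus the circuit $A$ would properly contain the circuit $B$, contradicting minimality. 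Hence $v\in B$, and since every vertex of a circuit has degree at least $3$, all three spokes lie in $B$; the edge bookkeeping then gives $C=\cres{A}{B}{e}$ with $|V(B)|\le n$, and the case split on $t$ becomes unnecessary (when $t=2$ the unique circuit of $D$ is precisely your $K_4$). Your appeal to ``the fundamental circuit of $e$ in $C-v$ spans $V(C)\setminus\{v\}$'' concerns the wrong graph, and the claimed use of $3$-connectivity is not substantiated. A secondary soft spot: your selection of $u$ (the count of at least four degree-$3$ vertices plus the ``re-designate $v$'' corner case when they all lie in $\{v\}\cup N(v)$) is asserted rather than argued; the paper avoids this by extracting directly from Lemma~\ref{thm:BergJordan} an admissible vertex together with a non-adjacent degree-$3$ vertex.
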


\begin{proof}
We apply a weaker version of Lemma~\ref{thm:BergJordan} to find two non-adjacent vertices $a$ and $b$ of degree 3 such that a circuit $A=(V\setminus{\{a\}},E\setminus{\{au,av,aw\}})$ can be produced via an inverse Henneberg II operation on vertex $a$ in $C$ (see Fig.~\ref{fig:invCombResA}). Let the neighbors of vertex $a$ be $N(a)=\{u,v,w\}$ such that $e=uw$ was not an edge of $C$ and is the one added to obtain the new circuit $A$. 
\begin{figure}[ht]
	\centering
		\includegraphics[width=.33\textwidth]{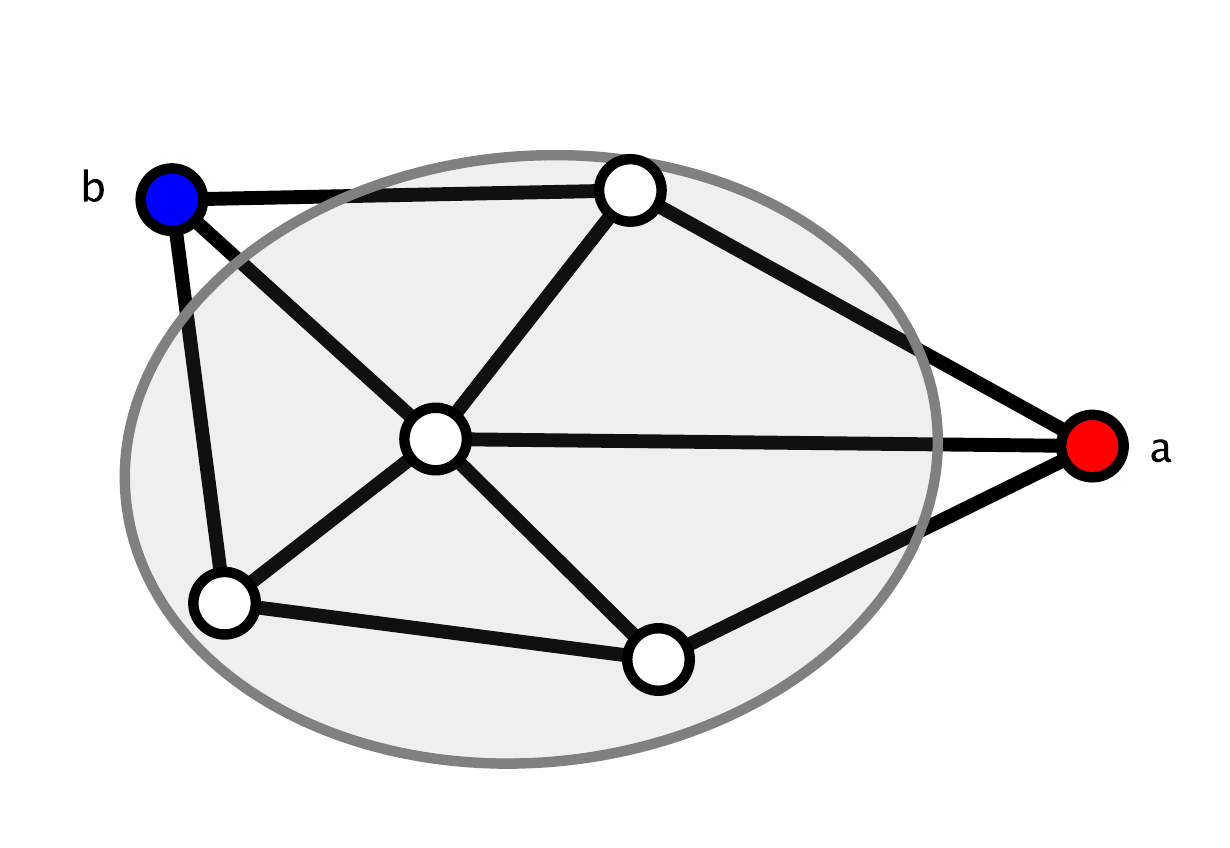}
		\includegraphics[width=.3\textwidth]{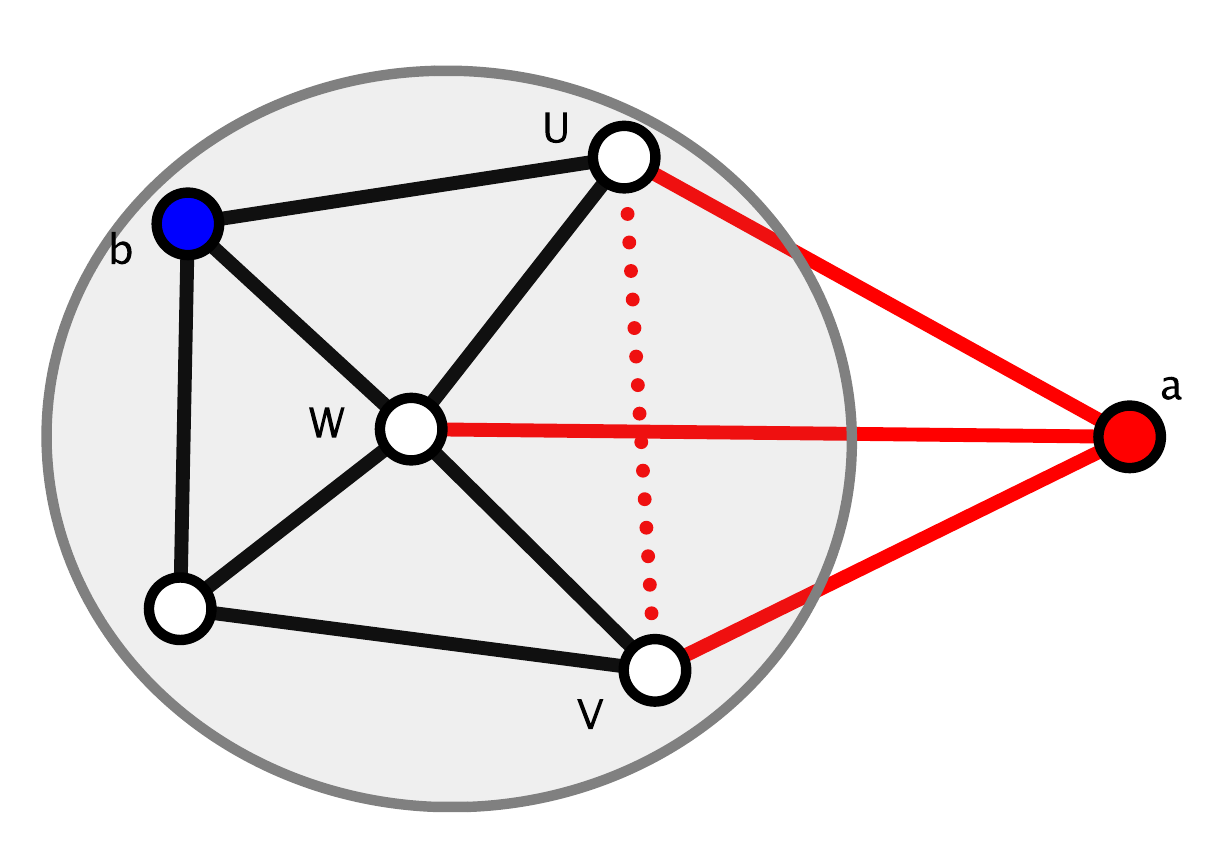}
		\includegraphics[width=.3\textwidth]{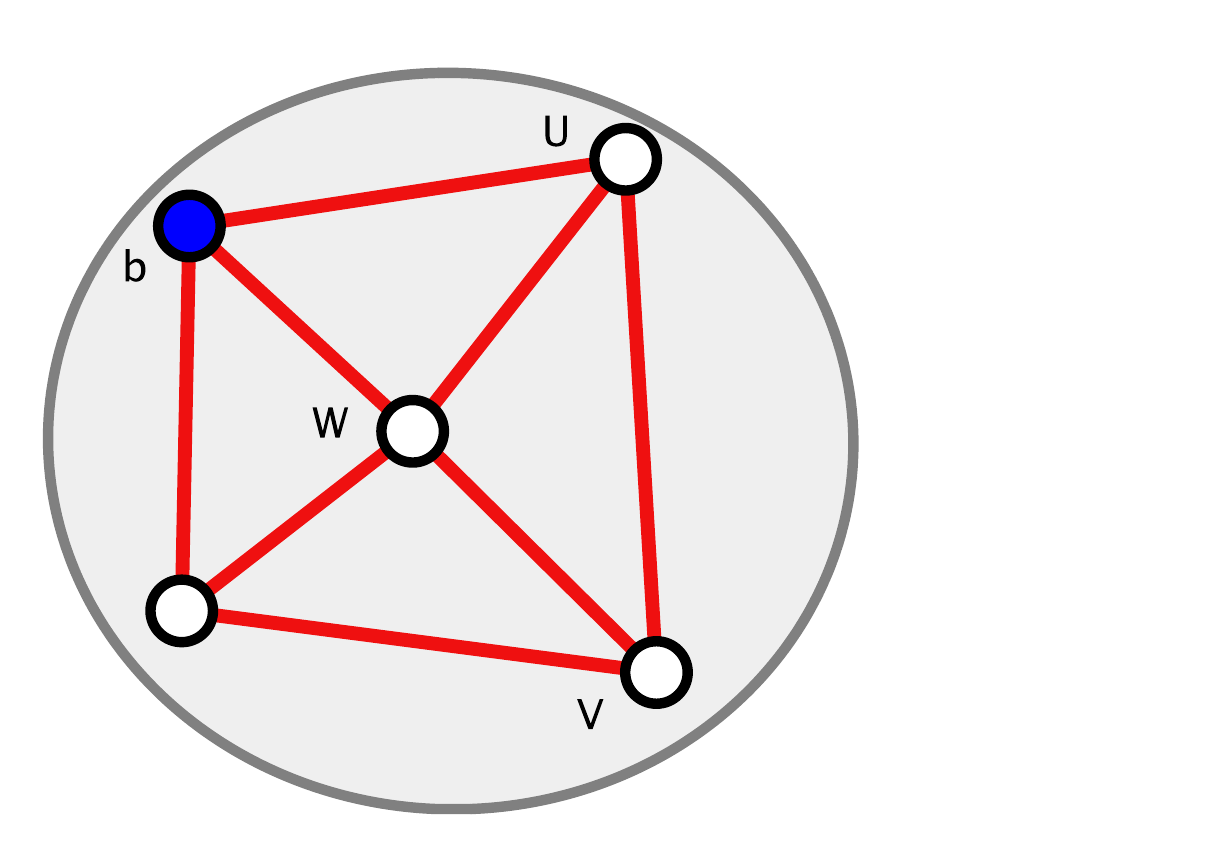}
	\caption{
	The 3-connected circuit $C$ spanning $n+1$ vertices with two non-adjacent vertices $a$ (red) and $b$ (blue) of degree 3. Note that $N(a)$ and $N(b)$ may not be disjoint. An inverse Henneberg II at $a$ removes the red edges at $a$ and adds dotted red edge $e = uv$. Circuit $A$ (red). 
	}
	\label{fig:invCombResA}
\end{figure}
To define circuit $B$, we first let $L$ be the subgraph of $C$ induced by $V\setminus\{b\}$. Simple sparsity consideration show that $L$ is a Laman graph. The graph $D$ obtained from $L$ by adding the edge $e=uv$, as in Fig.~\ref{fig:invCombResB} (left), is a Laman-plus-one graph 
containing the three edges incident to $a$ (which are not in $A$) and the edge $e$ (which is in $A$). $D$ contains a unique circuit $B$ (Fig.~\ref{fig:invCombResB} left) with edge $e\in B$ (see e.g.~\cite[Proposition 1.1.6]{Oxley:2011}). It remains to prove that $B$ contains $a$ and its three incident edges. If $B$ does not contain $a$, then it is a proper subgraph of $A$. But this contradicts the minimality of $A$ as a circuit. Therefore $a$ is a vertex in $B$, and because a vertex in a circuit can not have degree less than $3$, $B$ contains all its three incident edges.

\begin{figure}[ht]
	\centering
		\includegraphics[width=.33\textwidth]{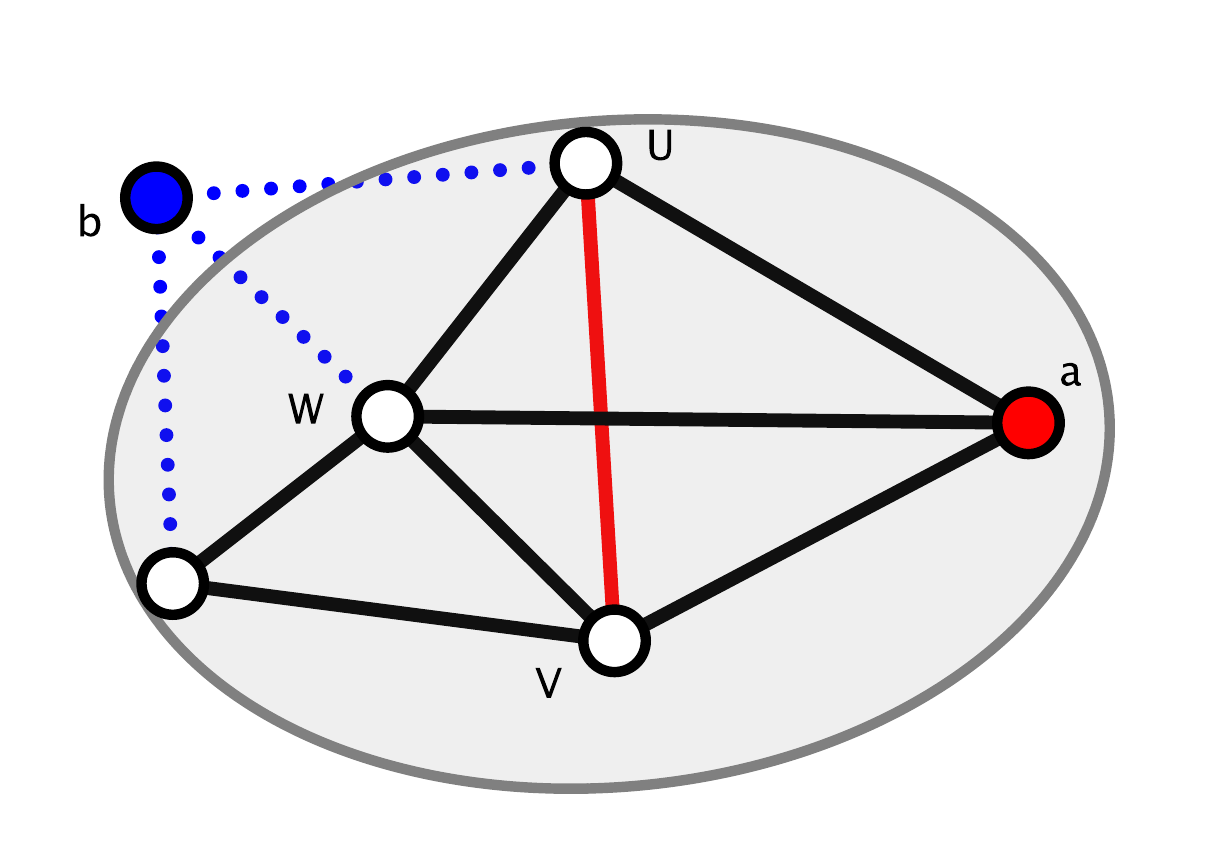}
		\includegraphics[width=.3\textwidth]{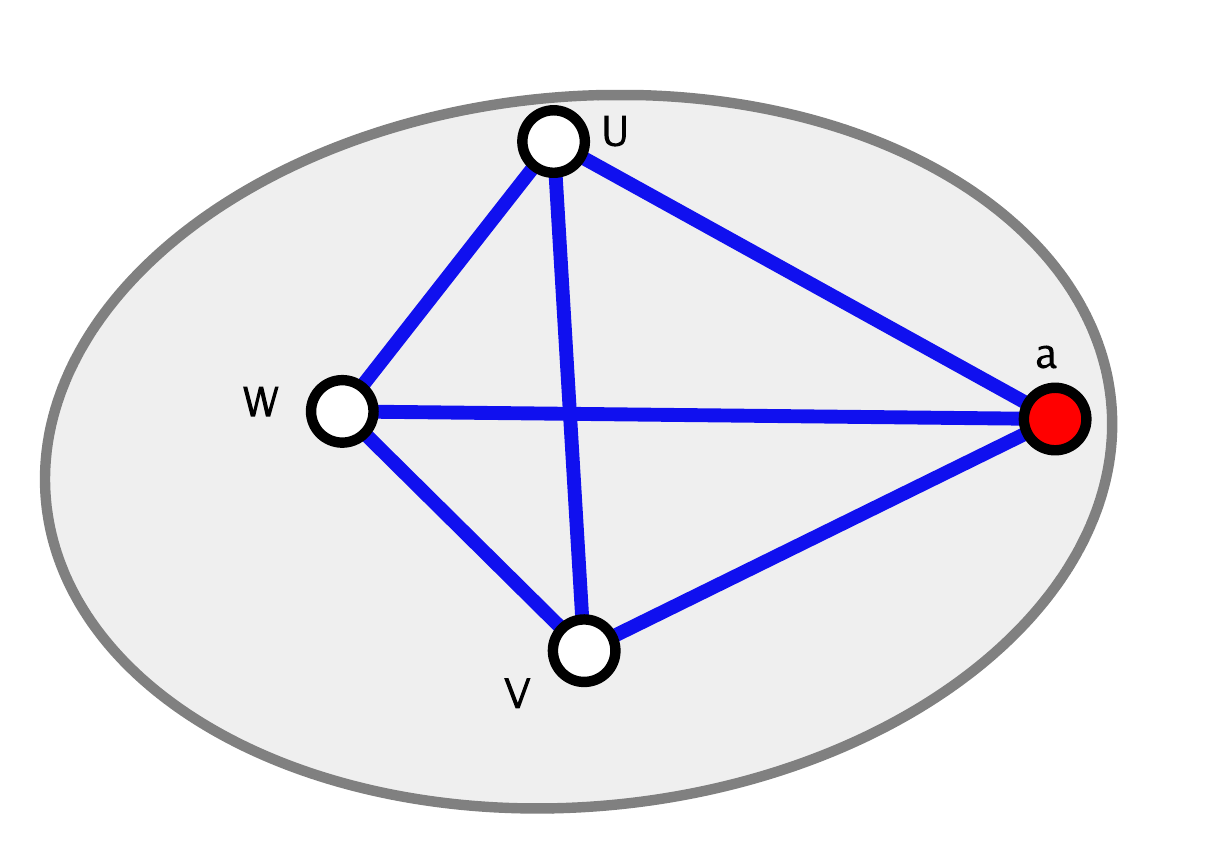}
	\caption{
	Remove from $C$ the edges from $b$ (blue dotted) and add red edge $e$. Circuit $B$ (blue).}
	\label{fig:invCombResB}
\end{figure}

The combinatorial resultant $\cres{A}{B}{e}$ of the circuits $A$ and $B$ with $e$ the eliminated edge satisfies the desired property that $C=\cres{A}{B}{e}$.

The algorithm below 
captures this procedure. The main steps, the Inverse Henneberg II step on a circuit at line \ref{line:invH} and finding the unique circuit in a Laman-plus-one graph at line \ref{line:circuit} can be done in polynomial time using properties of the $(2,3)$ and $(2,2)$-sparsity pebble games from \cite{streinu:lee:pebbleGames:2008}. 
\end{proof}

\begin{algorithm}[ht]
	\caption{Inverse Combinatorial Resultant}
	\label{alg:comb}
	\textbf{Input}: $3$-connected circuit $C$\\
	\textbf{Output}: circuits $A$, $B$ and edge $e$ such that $C=\cres{A}{B}{e}$
	\begin{algorithmic}[1]
		\For{each vertex $a$ of degree $3$}
		\If{inverse Henneberg II is possible on $a$\\ \ \ \ \ \textbf{and} there is a non-adjacent degree $3$  vertex $b$}
		\State Get circuit $A$ and edge $e$ by inverse Henneberg II in $C$ on $a$\label{line:invH}
		\State Let $D = C$ without $b$ (and its edges) and with new edge $e$
		\State Compute unique circuit $B$ in $D$ \label{line:circuit}
		\State \Return circuits $A, B$ and edge $e$
		\EndIf
		\EndFor
	\end{algorithmic}
\end{algorithm}

\subparagraph{Resultant tree.} The inductive construction of a circuit using combinatorial resultant operations can be represented in a {\em tree} structure. Let $C$ be a rigidity circuit with $n$ vertices. A \emph{resultant tree} $T_C$ for the circuit $C$ is a rooted binary tree with $C$ as its root and such that: (a) the nodes of $T_C$ are circuits; (b) circuits on level $l$ have at most $n-l$ vertices; (c) the two children $\{C_j,C_k\}$ of a parent circuit $C_i$ are such that $C_i=\cres{C_j}{C_k}{e}$, for some common edge $e$, and (d) the leaves are complete graphs on 4 vertices. The complexity of finding a resultant tree depends on the size of the tree, whose depth is at most $n-4$. The combinatorial resultant tree may thus be anywhere between linear to exponential in size. The best case occurs when the resultant tree is path-like, with each internal node having a $K_4$ leaf. The worst case could be a complete binary tree: each internal node at level $k$ would combine two circuits with the same number of vertices $n-k-1$ into a circuit with $n-k$ vertices. Sporadic examples of small balanced combinatorial resultant trees exist (e.g. $K_{33}$-plus-one), but it remains an open problem to find infinite families of such examples. Even if such a family would be found, it is still conceivable that alternative, non-balanced combinatorial resultant trees could yield the same circuit. 


\begin{problem}
	Characterize the circuits produced by the worst-case size of the combinatorial resultant tree. 
\end{problem}

\begin{problem}
	Are there infinite families of circuits with only balanced combinatorial resultant trees? 
\end{problem}

\begin{problem}
	Refine the time complexity analysis of the combinatorial resultant tree algorithm.
\end{problem}

\begin{corollary}\label{cor:nonunique}
The representation of $C$ as the combinatorial resultant of two smaller circuits is not unique, in general. An example is the ``double-banana'' 2-connected circuit shown in Figure \ref{fig:exampleCircuit}.
\end{corollary}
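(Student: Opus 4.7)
The plan is to exhibit two structurally different decompositions of the double-banana circuit $C$ as a combinatorial resultant of strictly smaller circuits, and verify each directly from the definition. Label the six vertices of $C$ by $\{1,2,3,4,u,v\}$ and regard $C$ as obtained from two copies of $K_4$, one on $\{1,2,u,v\}$ and one on $\{3,4,u,v\}$, glued along $\{u,v\}$ with the shared edge $uv$ deleted; then $|E(C)|=10$ and the non-edges of $C$ are exactly $uv,13,14,23,24$.

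The first decomposition is the canonical $2$-sum: take $A_1,B_1$ to be the two constituent $K_4$'s and let $e_1=uv$. Both contain $uv$, the common subgraph is the single (trivially Laman) edge $uv$, and $(E(A_1)\cup E(B_1))\setminus\{uv\}=E(C)$, so $C=\cres{A_1}{B_1}{uv}$. A genuinely different second decomposition uses two copies of $W_4$: let $A_2$ be the $4$-wheel with hub $1$ on $\{1,2,u,v,3\}$ whose outer $4$-cycle visits $2,u,3,v$ in this cyclic order; let $B_2$ be the $4$-wheel with hub $3$ on $\{1,3,4,u,v\}$ whose outer $4$-cycle visits $1,u,4,v$ in this cyclic order; and take $e_2=13$. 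The two outer cycles avoid the respective non-edges of $C$ (namely $23,uv$ for $A_2$ and $14,uv$ for $B_2$), so every edge of $A_2\cup B_2$ except $13$ already lies in $E(C)$. A short inclusion-exclusion then gives $(E(A_2)\cup E(B_2))\setminus\{13\}=E(C)$, and the common subgraph $E(A_2)\cap E(B_2)=\{13,1u,1v,3u,3v\}$ equals $K_4$ on $\{1,3,u,v\}$ minus $uv$, which is Laman with $5=2\cdot 4-3$ edges.

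The two decompositions are certifiably distinct because $K_4$ has $4$ vertices and $W_4$ has $5$. The main obstacle I expect is simply the careful choice of the outer cycles of the two $W_4$ copies so that they stay inside $E(C)$ and so that their mutual intersection is precisely a Laman common subgraph; all the remaining checks, including the verification that $W_4$ is itself a rigidity circuit, are finite $(2,3)$-sparsity checks on graphs with at most five vertices.
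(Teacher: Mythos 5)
Your proposal is correct and matches the paper's own justification: the paper establishes non-uniqueness with exactly these two decompositions of the double-banana (a $2$-sum of the two constituent $K_4$'s, and a combinatorial resultant of two $4$-wheels hubbed at the two degree-$3$ "cross" vertices with the cross edge eliminated), shown in Figure~\ref{fig:exampleCircuit} and spelled out in Example~\ref{example:resultantNotIrreducible}; your wheels and elimination edge are the same construction up to relabeling. The only difference is that you verify the edge sets and the Laman intersection explicitly, whereas the paper leaves this to the figure.
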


\begin{figure}[ht]
\centering
\includegraphics[width=.3\textwidth]{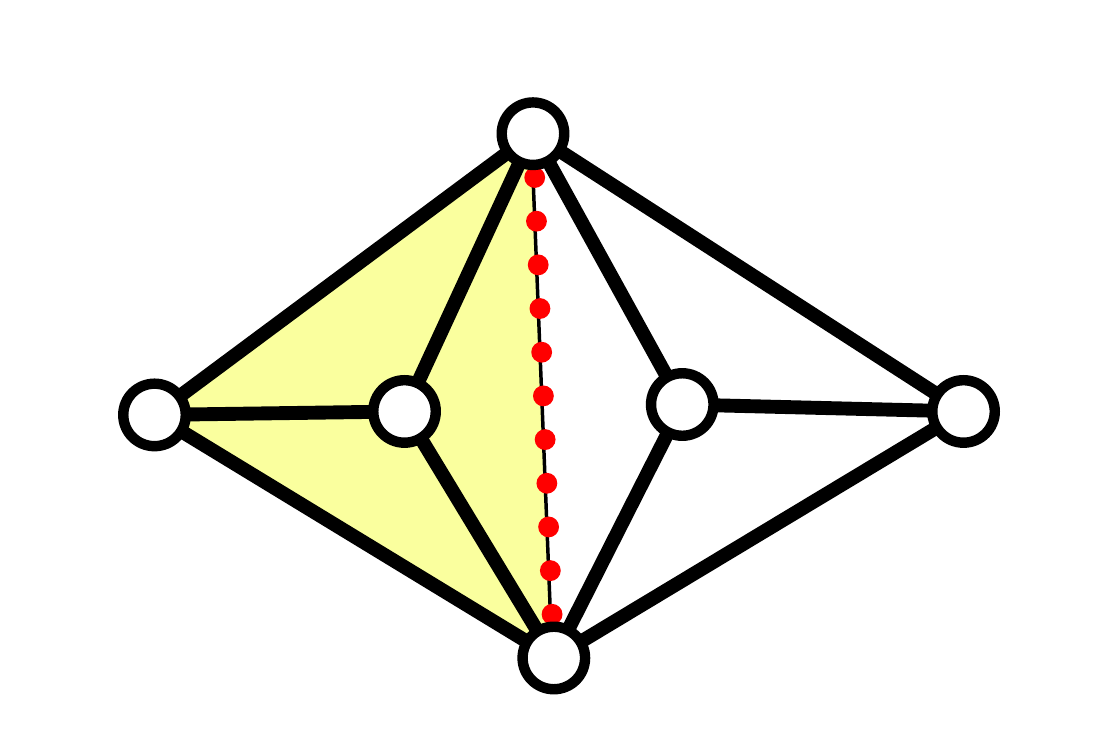}
\includegraphics[width=.3\textwidth]{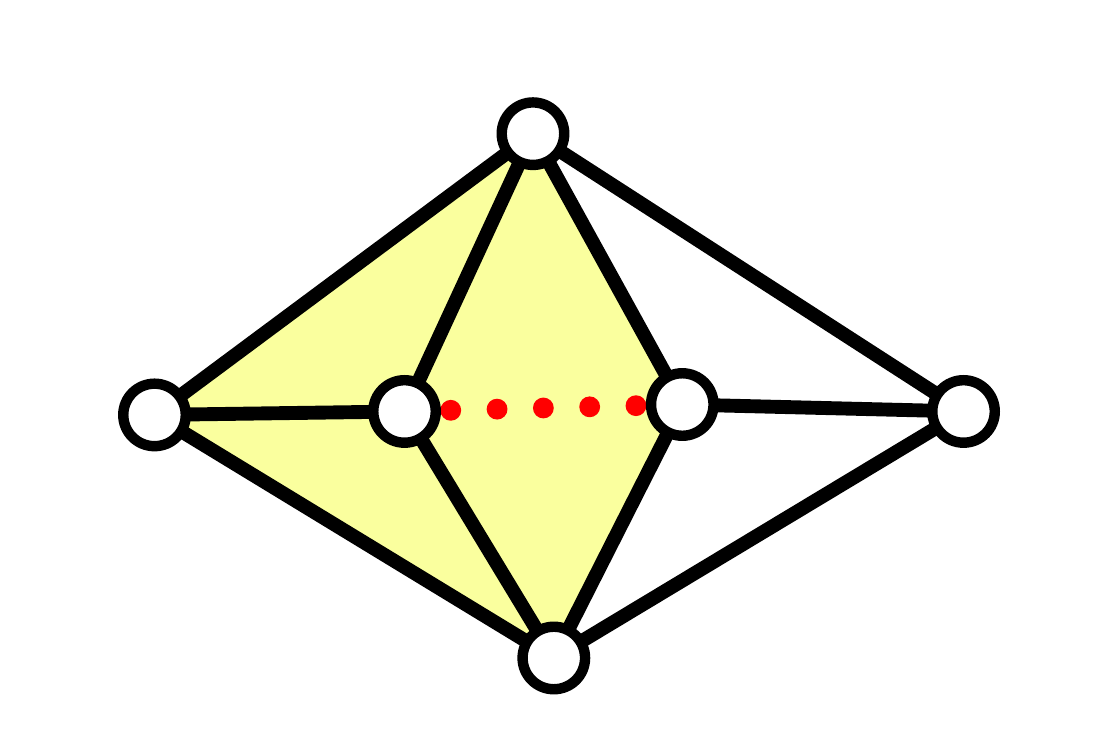}
\caption{The 2-connected {\em double-banana} circuit can be obtained as combinatorial resultant from two $K_4$ graphs (left, $2$-sum), and from two wheels on 4 vertices (right). Dashed lines indicate the eliminated edges, and in each case one of the two circuits is highlighted to distinguish $K_4$ from $W_4$.}
\label{fig:exampleCircuit}
\end{figure}

\section{Preliminaries: Resultants and Elimination Ideals}
\label{sec:prelimResultants}

We turn now to the algebraic aspects of our problem. In this section we review known concepts and facts about resultants and elimination ideals that are essential ingredients of our proofs in Section~\ref{sec:algResCircuits}.

\subparagraph{Resultants.} The resultant can be introduced in several equivalent ways \cite{GelfandKapranovZelevinsky}. Here we use its definition as the determinant of the Sylvester matrix.

Let $R$ be a ring of polynomials and $f,g\in R[x]$ with $\deg_xf=r$ and $\deg_xg=s$ be such that at least one of $r$ or $s$ is positive and
\begin{align*}
 f(x)&=a_{r}x^{r}+\cdots+a_1 x + a_0,\\
 g(x)&=b_{s}x^{s}+\cdots+b_1 x + b_0 
\end{align*}

The \emph{resultant} of $f$ and $g$ with respect to the indeterminate $x$, denoted $\res{f}{g}{x}$, is the determinant of the $(r+s)\times(r+s)$ Sylvester matrix
\[
\operatorname{Syl}(f,g,x)=\begin{pmatrix}
a_{r} & a_{r-1} & a_{r-2} & \cdots & a_0 & 0 & 0 & \cdots & 0\\
0 & a_r & a_{r-1} & \cdots & a_1 & a_0 & 0 & \cdots & 0\\
0 & 0 & a_r & \cdots & a_2 & a_1 & a_0 & \cdots & 0\\
\vdots & \vdots & \vdots & \ddots & \vdots & \vdots & \vdots & \ddots & 0\\
0 & 0 & 0 & \cdots & a_r & a_{r-1} & a_{r-2} & \ddots & a_0\\

b_{s} & b_{s-1} & b_{s-2} & \cdots & b_0 & 0 & 0 & \cdots & 0\\
0 & b_s & b_{s-1} & \cdots & b_1 & b_0 & 0 & \cdots & 0\\
0 & 0 & b_s & \cdots & b_2 & b_1 & b_0 & \cdots & 0\\
\vdots & \vdots & \vdots & \ddots & \vdots & \vdots & \vdots & \ddots & 0\\
0 & 0 & 0 & \cdots & b_s & b_{s-1} & b_{s-2} & \ddots & b_0
\end{pmatrix}
\]
where the submatrix $S_f$ containing only the coefficients of $f$ is of dimension $s\times (r+s)$, and the submatrix $S_g$ containing only the coefficients of $g$ is of dimension $r\times (r+s)$. Unless $r=s$, the columns $(a_{0}~a_{1}~\cdots~a_r)$ and $(b_{0}~b_{1}~\cdots~b_s)$ of $S_f$ and $S_g$, respectively, are not aligned in the same column of $\operatorname{Syl}(f,g,x)$, as displayed above, but rather the first is shifted to the left or right of the second, depending on the relationship between $r$ and $s$. We will make implicit use of the following well-known symmetric and multiplicative properties of the resultant:

\begin{proposition}
	\label{prop:basicPropResultants}(\cite[pp.~398]{GelfandKapranovZelevinsky}) Let $f, g, h\in R[x]$. The resultant of $f$ and $g$ satisfies
\begin{itemize}
	\item $\res{f}{g}{x}=(-1)^{rs}\res{g}{f}{x}$,
	\item $\res{fg}{h}{x}=\res{f}{h}{x}\res{g}{h}{x}$.
\end{itemize}
\end{proposition}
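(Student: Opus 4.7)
The plan is to prove both identities directly from the determinantal definition of the Sylvester resultant. The first (symmetry) identity reduces to a bookkeeping argument on row permutations of the Sylvester matrix, while the second (multiplicativity) identity is most naturally obtained via the Poisson product formula for resultants, together with a density argument.

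For the symmetry identity $\res{f}{g}{x}=(-1)^{rs}\res{g}{f}{x}$, I would observe that $\operatorname{Syl}(g,f,x)$ is obtained from $\operatorname{Syl}(f,g,x)$ by swapping the top block $S_f$ (containing the $s$ rows of $f$-coefficients) with the bottom block $S_g$ (containing the $r$ rows of $g$-coefficients). Moving each of the $s$ rows of $S_f$ past each of the $r$ rows of $S_g$ requires exactly $rs$ adjacent row transpositions, and each transposition multiplies the determinant by $-1$. Hence $\det\operatorname{Syl}(g,f,x)=(-1)^{rs}\det\operatorname{Syl}(f,g,x)$, which is the claim.

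For the multiplicativity identity $\res{fg}{h}{x}=\res{f}{h}{x}\res{g}{h}{x}$, I would work over the algebraic closure $\overline{K}$ of the fraction field of $R$, where $f$, $g$ and $h$ split into linear factors. The Poisson product formula states that $\res{p}{h}{x}=(\operatorname{lc}(p))^{\deg h}\prod_i h(\alpha_i)$, where the $\alpha_i$ are the roots of $p$ counted with multiplicity. Applying this to $p=fg$, using $\operatorname{lc}(fg)=\operatorname{lc}(f)\operatorname{lc}(g)$ and partitioning the root set of $fg$ into the roots of $f$ and the roots of $g$, the claimed product factorization becomes immediate. Finally, since both sides of the identity are polynomial expressions in the coefficients of $f,g,h$ and they agree on the Zariski-dense locus where $f$ and $g$ have simple, distinct roots and nonzero leading coefficients, they agree identically over the integers, and hence over any ring $R$ by specialization.

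The main obstacle is establishing (or citing) the Poisson product formula itself, as the row-swap argument for symmetry is entirely elementary. The standard derivation of the Poisson formula follows from evaluating $\det\operatorname{Syl}(f,g,x)$ on a factorization of $f$, using multilinearity of the determinant and a Vandermonde computation; since this is classical, it can be invoked from \cite{GelfandKapranovZelevinsky} rather than reproved, making the overall argument short.
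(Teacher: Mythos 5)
Your proposal is correct, but it proves more than the paper does: for Proposition~\ref{prop:basicPropResultants} the paper offers no argument at all, simply citing \cite[pp.~398]{GelfandKapranovZelevinsky} and treating both identities as classical facts to be used implicitly. Your row-swap argument for the symmetry identity is exactly right: passing from $\operatorname{Syl}(f,g,x)$ to $\operatorname{Syl}(g,f,x)$ moves the $r$ rows of the $g$-block past the $s$ rows of the $f$-block, i.e.\ $rs$ adjacent transpositions, giving the factor $(-1)^{rs}$. Your route to multiplicativity — the Poisson product formula $\res{p}{h}{x}=\operatorname{lc}(p)^{\deg h}\prod_i h(\alpha_i)$ over the algebraic closure of the fraction field, applied to $p=fg$, followed by a specialization/density argument to pass from the generic case back to an arbitrary coefficient ring — is the standard textbook derivation and is sound. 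Two small points worth making explicit if you write this up: first, the Poisson-formula step needs $R$ to be an integral domain so that a fraction field exists and $\deg_x(fg)=\deg_x f+\deg_x g$ (so that the Sylvester matrix of $fg$ has the expected size); this is harmless in the paper, where $R$ is always a polynomial ring over $\q$, and your final specialization-from-$\mathbb{Z}$ argument covers the general case provided you fix the formal degrees of $f$, $g$, $h$ throughout so that the matrices being specialized match. Second, since you invoke the Poisson formula from the literature anyway, the whole argument is essentially a citation plus bookkeeping, which is consistent with the paper's decision to cite \cite{GelfandKapranovZelevinsky} outright; what your version buys is a self-contained justification, at the cost of importing one classical lemma.
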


\begin{proposition}\label{prop:resultantCommonFactor}Let $R$ be a unique factorization domain and $f,g\in R[x]$. Then $f$ and $g$ have a common factor in $R[x]$ if and only if $\res{f}{g}{x}=0$. 
\end{proposition}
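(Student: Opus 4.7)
The plan is to establish both implications via the equivalence $\res{f}{g}{x}=0$ iff the rows of $\operatorname{Syl}(f,g,x)$ are linearly dependent over the fraction field $K=\ffield{R}$. The key translation is that the rows of the Sylvester matrix are precisely the coefficient vectors, in the monomial basis $x^{r+s-1},\ldots,x,1$, of the polynomials $x^{s-1}f,\ldots,xf,f,x^{r-1}g,\ldots,xg,g$. Therefore $\res{f}{g}{x}=0$ is equivalent to the existence of polynomials $A,B$, not both zero, with $\deg_x A<s$ and $\deg_x B<r$, satisfying $Af+Bg=0$; a priori $A,B\in K[x]$, but after clearing denominators one may take $A,B\in R[x]$.

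For the forward direction, suppose $f$ and $g$ share a non-constant factor $h\in R[x]$, and write $f=hf_1$, $g=hg_1$. Then $\deg_x f_1<r$ and $\deg_x g_1<s$, and the identity $g_1 f - f_1 g = hf_1 g_1 - hf_1 g_1 = 0$ furnishes the required dependence with $A=g_1$ and $B=-f_1$, so $\res{f}{g}{x}=0$.

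For the reverse direction, assume $\res{f}{g}{x}=0$ and obtain $A,B\in R[x]$ as above. Neither $A$ nor $B$ can vanish alone, since $R[x]$ is an integral domain and $f,g\neq 0$. From $Af=-Bg$ in $K[x]$, if $f$ and $g$ were coprime in the principal ideal domain $K[x]$ then $g\mid A$ in $K[x]$, contradicting $\deg_x A<\deg_x g=s$. Hence $f$ and $g$ share a non-constant common factor in $K[x]$.

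The one subtle point, which I expect to be the main obstacle, is lifting this common factor from $K[x]$ back to $R[x]$. This is precisely where the hypothesis that $R$ is a UFD becomes essential: by Gauss's lemma, a non-constant polynomial in $R[x]$ that factors non-trivially over $K[x]$ also factors non-trivially over $R[x]$ (after adjusting by content), and any primitive common divisor in $K[x]$ can be rescaled to lie in $R[x]$. Applying this to the common factor of $f$ and $g$ extracted above yields a non-constant common factor of $f$ and $g$ in $R[x]$, completing the equivalence.
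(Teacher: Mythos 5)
Your proof is correct, but it takes a genuinely different route from the paper, which in fact offers no proof at all: it cites Griffiths--Harris, where the statement appears without proof, and for the case when $R$ is a field it points to Cox--Little--O'Shea, remarking that that case ``directly generalizes to polynomial rings via Hilbert's Nullstellensatz.'' Your argument is instead a direct, self-contained proof for an arbitrary UFD: the reading of $\res{f}{g}{x}=0$ as a nontrivial dependence $Af+Bg=0$ with $\deg_x A<s$, $\deg_x B<r$ over the fraction field $K$ of $R$ is exactly the field-case mechanism used in Cox--Little--O'Shea, and your appeal to Gauss's lemma to transport the common factor between $K[x]$ and $R[x]$ replaces the suggested Nullstellensatz specialization step, which would only cover the polynomial-ring case $R=\q[X']$ that the paper actually needs. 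What your route buys is the statement in the generality in which it is asserted (any UFD), with no detour through algebraically closed fields; what the paper's citation route buys is only brevity. Two points worth making explicit in a final write-up: ``common factor'' must be read as a common factor of positive degree in $x$, as you implicitly do --- with non-unit constant factors allowed the statement is false (e.g.\ $f=2x$, $g=2x+2$ over $\mathbb{Z}$ have common factor $2$ but resultant $4$); and the degenerate case $r=0$ or $s=0$ should be dispatched separately (there $\res{f}{g}{x}$ is a power of a nonzero constant, and the degree bound forces $A=0$ or $B=0$), after which your argument, including the clearing of denominators to get $A,B\in R[x]$ and the primitivity/content adjustment in the Gauss's lemma step, goes through as stated.
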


This proposition is stated in \cite[pp.~9]{GriffithsHarris} without proof. When $R$ is a field, a proof of this property can be found in \cite[Chapter 3, Proposition 3 of \S6]{CoxLittleOshea}, which directly generalizes to polynomial rings via Hilbert's Nullstellensatz.

\subparagraph{Homogeneous properties.} Circuit polynomials in the Cayley-Menger ideal are homogeneous polynomials (see Proposition \ref{prop:circHomogeneous}), hence we are interested in the properties of the resultant of homogenous polynomials. It is well known that the resultant $\res{f}{g}{x}$ of two homogeneous polynomials is itself homogeneous, and with degree $m\deg_x{g}+n\deg_{x}{f}-\deg_x{f}\cdot\deg_x{g}$, where $m$, resp.\ $n$ are the homogeneous degrees of $f$, resp.\ $g$. For completeness we prove these two facts; the exposition follows \cite{CoxLittleOshea}.

Let $m$, $n$, $r$ and $s$ be positive integers such that $m\geq r$ and $n\geq s$. Let $f$ and $g$ be polynomials of degree $r$ and $s$ in $x$, with generic coefficients $a_{m-r}$, $\dots$, $a_m$ and $b_{n-s}$, $\dots$, $b_n$, respectively, i.e.
\begin{align*}
 f(x)&=a_{m-r}x^{r}+\cdots+a_{m-1} x + a_m,\\
 g(x)&=b_{n-s}x^{s}+\cdots+b_{n-1} x + b_n. 
\end{align*}

\begin{proposition}
The resultant $\res{f}{g}{x}$ is a homogeneous polynomial in the ring $\mathbb Z[a_{m-r},\dots,a_m,$ $b_{n-s}, \dots, b_n]$ of degree $r+s$.\end{proposition}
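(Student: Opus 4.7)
The plan is to read the degree statement directly off the Leibniz expansion of $\det\operatorname{Syl}(f,g,x)$. First I would recall that by definition $\res{f}{g}{x}=\det\operatorname{Syl}(f,g,x)$, and that the Sylvester matrix is partitioned into a top block $S_f$ of $s$ rows, each of which is a cyclic shift of the coefficient row $(a_{m-r}, a_{m-r+1}, \ldots, a_m)$ padded with zeros, and a bottom block $S_g$ of $r$ rows, each of which is a cyclic shift of $(b_{n-s}, b_{n-s+1}, \ldots, b_n)$ padded with zeros. In particular, every nonzero entry in the top $s$ rows is one of the $a_{m-r+i}$, and every nonzero entry in the bottom $r$ rows is one of the $b_{n-s+j}$.

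Next I would apply the Leibniz formula
\[
\det\operatorname{Syl}(f,g,x)=\sum_{\sigma\in S_{r+s}}\operatorname{sgn}(\sigma)\prod_{i=1}^{r+s}\operatorname{Syl}(f,g,x)_{i,\sigma(i)}.
\]
For a fixed permutation $\sigma$, the product contains exactly one factor from each of the $r+s$ rows. By the row structure recalled above, the $s$ factors coming from the top block are either zero or of the form $a_{*}$, and the $r$ factors coming from the bottom block are either zero or of the form $b_{*}$. Hence every nonzero summand is, up to sign, a monomial of total degree exactly $s$ in the variables $a_{m-r},\ldots,a_m$ and of total degree exactly $r$ in the variables $b_{n-s},\ldots,b_n$, for a combined total degree of $r+s$ in $\mathbb Z[a_{m-r},\ldots,a_m,b_{n-s},\ldots,b_n]$.

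Finally I would conclude that, since every nonzero monomial appearing in the Leibniz expansion of $\res{f}{g}{x}$ has the same total degree $r+s$ in the generic coefficients, the resultant is a homogeneous polynomial of degree $r+s$ in the ring $\mathbb Z[a_{m-r},\ldots,a_m,b_{n-s},\ldots,b_n]$, as claimed.

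There is essentially no hard step here; the argument is purely a bookkeeping consequence of the block structure of $\operatorname{Syl}(f,g,x)$. The only thing to be careful about is not to confuse ``total degree in the coefficient variables'' with ``degree in $x$'': the variable $x$ has already been eliminated in the formation of the Sylvester matrix, so degrees are counted only in the $a_i$ and $b_j$. The finer bi-homogeneity statement (degree $s$ in the $a$'s and degree $r$ in the $b$'s), which will be useful for the next proposition about the homogeneous degree of the resultant as a polynomial in the remaining variables, also falls out of the same argument and can be recorded as a remark.
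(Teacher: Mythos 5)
Your argument is correct and is essentially the paper's own proof: both read the degree off the Leibniz expansion of $\det\operatorname{Syl}(f,g,x)$, noting that every nonzero entry is one of the generic coefficients so each nonzero term is a product of exactly $r+s$ of them (your finer observation of bi-homogeneity, degree $s$ in the $a$'s and $r$ in the $b$'s, is a correct bonus the paper does not state). The only slip is cosmetic: the rows of the Sylvester blocks are shifted copies of the coefficient vectors padded with zeros, not \emph{cyclic} shifts, but this does not affect the argument.
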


\begin{proof}This property follows from the Leibniz expansion of the determinant of the Sylvester matrix $\operatorname{Syl}(f,g,x)=(S_{i,j})$, where $S_{i,j}$ is the $(i,j)$-th entry, which shows that each term of $\res{f}{g}{x}$ is, up to sign, equal to
\[\prod_{i=1}^{r+s}{S_{i,\sigma(i)}}\]
for some permutation $\sigma$ of the set $[r+s]$. This term is non-zero if and only if $S_{i,\sigma(i)}\neq 0$ for all $i\in[r+s]$, and since $S_{i,\sigma(i)}\in\{a_{m-r},\dots, a_m, b_{n-s}, \dots, b_n\}$, the homogeneous degree of $\res{f}{g}{x}$ is $r+s$.\end{proof}

\begin{proposition}
	\label{prop:resultantHomogeneous}
	Let $f=a_{m-r}x^{r}+\cdots+a_{m-1} x + a_m$ and $g=b_{n-s}x^{s}+\cdots+b_{n-1} x + b_n$ be homogeneous polynomials in $k[y_1,\dots, y_t,x]$ of homogeneous degree $m$ and $n$, respectively, so that $a_i,b_j\in k[y_1,\dots,y_t]$ are homogeneous of degree $i$, for all $i\in\{m-r,\dots,m\}$ and all $j\in\{n-s,\dots,n\}$. If $\res{f}{g}{x}\neq 0$, then it is a homogeneous polynomial in $k[y_1,\dots,y_t]$ of degree
\[m\deg_x{g}+n\deg_{x}{f}-\deg_x{f}\cdot\deg_x{g}=ms + nr-rs.\]
\end{proposition}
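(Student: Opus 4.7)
The plan is to adapt the Leibniz-expansion argument used in the previous proposition, but now to track the homogeneous degree in $k[y_1,\dots,y_t]$ contributed by each entry of the Sylvester matrix, and to show that this total degree is the same for every nonzero term of the expansion. First I would fix an explicit indexing: index the rows of the $f$-block by $k=1,\dots,s$, the rows of the $g$-block by $l=1,\dots,r$ (so they occupy rows $s+1,\dots,s+r$ of the full matrix), and the columns by $j=1,\dots,r+s$. Row $k$ of the $f$-block represents $x^{s-k}f(x)$, so its entry in column $j$ is the coefficient of $x^{r+s-j}$ in $x^{s-k}f$, which equals $a_{m-r+j-k}$ whenever this index lies in $[m-r,m]$. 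By hypothesis this coefficient is homogeneous in the $y_i$'s of degree $m-r+j-k$. A parallel calculation for the $g$-block shows that the entry in row $l$, column $j$ equals $b_{n-s+j-l}$, of homogeneous degree $n-s+j-l$.

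A nonzero term in the Leibniz expansion of $\det\operatorname{Syl}(f,g,x)$ corresponds to a permutation $\sigma$ of $[r+s]$, and by the previous observation its total homogeneous degree in $k[y_1,\dots,y_t]$ is
\[
\sum_{k=1}^{s}\bigl(m-r+\sigma(k)-k\bigr) \;+\; \sum_{l=1}^{r}\bigl(n-s+\sigma(s+l)-l\bigr).
\]
Expanding this yields $s(m-r)+r(n-s)+\sum_{i=1}^{r+s}\sigma(i) - \sum_{k=1}^{s}k - \sum_{l=1}^{r}l$. Since $\sigma$ is a permutation of $[r+s]$, $\sum_{i=1}^{r+s}\sigma(i)=\binom{r+s+1}{2}$, and a short arithmetic simplification collapses the three remaining sums to $rs$. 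The total degree reduces to $ms+nr-rs$, which is independent of $\sigma$. Hence every nonzero term of the Leibniz expansion has the same homogeneous degree, so $\res{f}{g}{x}$, if nonzero, is homogeneous of the claimed degree $ms+nr-rs$ in $k[y_1,\dots,y_t]$.

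The main obstacle is purely bookkeeping: correctly identifying the entry $S_{k,j}$ of the Sylvester matrix in both blocks (in particular, getting the shifts right when $r\neq s$, which already caused the visual misalignment noted in the paper) and then checking that the contribution $\sigma(k)-k$ telescopes over a permutation. Once the indexing is nailed down, the $\sigma$-independence of the total degree is immediate, and there is no further algebraic content to worry about.
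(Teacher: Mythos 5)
Your proposal is correct and follows essentially the same route as the paper: a Leibniz expansion of the Sylvester determinant, with the observation that the entry in row $k$, column $j$ of the $f$-block (resp.\ $g$-block) is homogeneous of degree $m-r+j-k$ (resp.\ $n-s+j-l$), so every nonzero term has total degree $ms+nr-rs$ independent of the permutation. The only cosmetic difference is the bookkeeping of the permutation sums (you evaluate $\sum_i\sigma(i)$ minus the row indices to get $rs$, while the paper groups the terms as $\sum_i (i-\sigma(i))=0$), which is the same computation.
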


We were not able to find a reference for this proposition in the literature, however the case when $f$, resp.\ $g$ are homogeneous of degree $r$, resp.\ $s$ and such that $\deg_xf=r$ and $\deg_xg=s$, so that
\begin{align*}
 f&=a_{0}x^{r}+\cdots+a_{1} x + a_r,\\
 g&=b_{0}x^{s}+\cdots+b_{1} x + b_s, 
\end{align*}
can be found in e.g.\ \cite[pp.~454]{CoxLittleOshea} as Lemma 5 of $\S7$ of Chapter 8, stating that in that case $\res{f}{g}{x}$ is of homogeneous degree $rs$. The proof below is a direct adaptation of the proof of Lemma 5 in \cite[pp.~454]{CoxLittleOshea}, and the Lemma 5 itself follows directly from Proposition \ref{prop:resultantHomogeneous} by substituting $m\to r$ and $n\to s$ so to obtain $rs+sr-rs=rs$.

\begin{proof} Let $\operatorname{Syl}(f,g,x)=(S_{i,j})$ be the Sylvester matrix of $f$ and $g$ with respect to $x$, and let, up to sign, $\prod_{i=1}^{r+s}S_{i,\sigma(i)}$ be a non-zero term in the Leibniz expansion of its determinant for some permutation $\sigma$ of $[r+s]$.

A non-zero entry $S_{i,\sigma(i)}$ has degree $m-(r+i-\sigma(i))$ if $1\leq i\leq s$ and degree $n-(i-\sigma(i))$ if $s+1\leq i\leq r+s$. Therefore, the total degree of $\prod_{i=1}^{r+s}S_{i,\sigma(i)}$ is
\begin{align*}
&\sum_{i=1}^s[m-(r+i-\sigma(i))]+\sum_{i=s+1}^{r+s}[n-(i-\sigma(i))]
=\sum_{i=1}^s(m-r)+\sum_{i=s+1}^{s+r}n-\sum_{i=1}^{r+s}(i-\sigma(i))\\
=&s(m-r)+rn-0=m\deg_xg+n\deg_xf-\deg_xf\cdot\deg_xg.\end{align*}\end{proof}

\subparagraph{Elimination ideals.}

Let $I$ be an ideal of $\mathbb Q[X]$ and $X'\subset X$ non-empty. The \emph{elimination ideal} of $I$ with respect to $X'$ is the ideal $I\cap \mathbb Q[X']$ of the ring $\mathbb Q[X']$.

Elimination ideals frequently appear in the context of \grobner{} bases \cite{Buchberger, CoxLittleOshea} which give a general approach for computing elimination ideals: if $\mathcal G$ is a \grobner{} basis for $I$ with respect to an \emph{elimination order} (see Exercises 5 and 6 in \S1 of Chapter 3 in \cite{CoxLittleOshea}), e.g.\ the lexicographic order $x_{i_1}>x_{i_2}>\dots>x_{i_n}$, then the elimination ideal $I\cap \q[x_{i_{k+1}},\dots,x_{i_n}]$ which eliminates the first $k$ indeterminates from $I$ in the specified order has $\mathcal G\cap \q[x_{i_{k+1}},\dots,x_{i_n}]$ as its \grobner{} basis.

We will frequently make use of the following well-known result.

\begin{proposition} If $I$ is a prime ideal of $\q[X]$ and $X'\subset X$ is non-empty,
then the elimination ideal $I\cap \q[X']$ is prime.
\end{proposition}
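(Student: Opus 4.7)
The plan is to argue directly from the definition of a prime ideal applied to the subring inclusion $\mathbb{Q}[X']\hookrightarrow\mathbb{Q}[X]$. Let $J:=I\cap\mathbb{Q}[X']$. Two things need to be verified: that $J$ is a proper ideal of $\mathbb{Q}[X']$, and that it satisfies the primality condition with respect to products in $\mathbb{Q}[X']$.

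First I would check that $J$ is an ideal of $\mathbb{Q}[X']$: closure under addition is clear from both $I$ and $\mathbb{Q}[X']$ being additive subgroups, and for $h\in\mathbb{Q}[X']$ and $f\in J$, the product $hf$ lies in $I$ (because $I$ is an ideal of the larger ring $\mathbb{Q}[X]$ and $h\in\mathbb{Q}[X]$) and in $\mathbb{Q}[X']$ (because both factors are), hence in $J$. For properness, note that $I$ is prime in $\mathbb{Q}[X]$, so $1\notin I$, whence $1\notin J$ and $J\neq\mathbb{Q}[X']$.

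For the primality condition, take $f,g\in\mathbb{Q}[X']$ with $fg\in J$. Then in particular $fg\in I$, and since $I$ is prime in $\mathbb{Q}[X]$, either $f\in I$ or $g\in I$. Without loss of generality assume $f\in I$; combined with the hypothesis $f\in\mathbb{Q}[X']$, this yields $f\in I\cap\mathbb{Q}[X']=J$, as required.

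There is essentially no obstacle here: the statement is the standard fact that the contraction of a prime ideal along a subring inclusion is prime, and the argument consists of unwinding the definitions. The only mild point worth flagging is the verification that $J$ is closed under multiplication by elements of the smaller ring $\mathbb{Q}[X']$, which follows from the stronger closure of $I$ under multiplication by elements of $\mathbb{Q}[X]\supset\mathbb{Q}[X']$.
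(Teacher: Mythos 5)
Your proof is correct and follows essentially the same route as the paper's: reduce to primality of $I$ in $\q[X]$ by observing that $fg\in I\cap\q[X']$ forces $fg\in I$, hence $f\in I$ or $g\in I$, and intersect back with $\q[X']$. You simply spell out the routine verifications (that $I\cap\q[X']$ is a proper ideal of $\q[X']$) that the paper leaves implicit.
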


\begin{proof}
	If $f\cdot g\in I\cap\q[X']$ then certainly $f\cdot g\in I$ so at least one of $f$ or $g$ is in $I\cap\q[X']$.
\end{proof}

Let $X'\subset X$ be non-empty and $R=\mathbb Q[X']$. Furthermore, let $f,g\in R[x]$, where $x\in X\setminus X'$. It is clear from the definition of the resultant that $\res{f}{g}{x}\in R$. In Section \ref{sec:algResCircuits} we will make use of the following proposition.

\begin{proposition}\label{prop:resultantElimination}
Let $I$ be an ideal of $R[x]$ and $f,g\in I$. Then $\res{f}{g}{x}$ is in the elimination ideal $I\cap R$.
\end{proposition}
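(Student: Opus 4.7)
The plan is to exhibit $\res{f}{g}{x}$ as an explicit $R[x]$-linear combination of $f$ and $g$, and then observe that it already lies in $R$ by construction. Since the entries of $\operatorname{Syl}(f,g,x)$ are coefficients of $f$ and $g$ viewed as elements of $R$, the determinant $\res{f}{g}{x}=\det\operatorname{Syl}(f,g,x)$ belongs to $R$. Once we establish that $\res{f}{g}{x}\in(f,g)\subseteq I$, we conclude $\res{f}{g}{x}\in I\cap R$, which is what the proposition asserts.

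The main computational step would be the classical Sylvester trick of multiplying $M:=\operatorname{Syl}(f,g,x)$ by the column vector $v=(x^{r+s-1},x^{r+s-2},\ldots,x,1)^T$. Reading off the rows of $M$ as displayed (shifted copies of the coefficient sequence of $f$ in the first $s$ rows and of $g$ in the last $r$ rows), a direct computation gives
\[
M\cdot v \;=\; \bigl(x^{s-1}f,\; x^{s-2}f,\; \ldots,\; f,\; x^{r-1}g,\; \ldots,\; xg,\; g\bigr)^T,
\]
so every coordinate of $Mv$ is a polynomial multiple of either $f$ or $g$, and in particular lies in the ideal $(f,g)\subseteq R[x]$.

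From here I would invoke the adjugate identity $\operatorname{adj}(M)\cdot M=\det(M)\cdot\mathrm{Id}$, valid over any commutative ring. Applying both sides to $v$ yields $\operatorname{adj}(M)\cdot(Mv)=\det(M)\cdot v$. Reading the last coordinate of this vector equation and using that the last entry of $v$ is $1$ produces an identity of the shape
\[
\res{f}{g}{x} \;=\; \sum_{i=1}^{s} c_i\, x^{s-i} f \;+\; \sum_{j=1}^{r} d_j\, x^{r-j} g,
\]
where each $c_i$ and each $d_j$ is an entry of the last row of $\operatorname{adj}(M)$, hence a cofactor of $M$ and therefore an element of $R$. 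This places $\res{f}{g}{x}$ in $(f,g)\subseteq I$, and together with $\res{f}{g}{x}\in R$ gives $\res{f}{g}{x}\in I\cap R$.

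I do not anticipate a serious obstacle. The only point that deserves care is that in the intended application $R=\q[X']$ is a polynomial ring rather than a field, so one should not appeal to Gaussian elimination or to invertibility arguments; the determinant/adjugate computation above is the right substitute because the identity $\operatorname{adj}(M)\cdot M=\det(M)\cdot\mathrm{Id}$ and the Laplace expansion are valid over any commutative ring, so the argument goes through verbatim.
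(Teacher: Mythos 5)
Your proposal is correct, and it is essentially the argument the paper relies on: the paper gives no proof of its own but cites Cox--Little--O'Shea, whose proof is precisely this Sylvester-matrix computation (multiplying $\operatorname{Syl}(f,g,x)$ by the vector of powers of $x$ and using the adjugate/Cramer identity over a commutative ring) to write $\res{f}{g}{x}=Af+Bg$ with $A,B\in R[x]$, whence $\res{f}{g}{x}\in \ideal{f,g}\cap R\subseteq I\cap R$. Your care in avoiding invertibility arguments over the non-field $R$ is exactly the right point, and the argument needs no further patching.
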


A proof of this proposition can be found in \cite[pp.~167]{CoxLittleOshea}.

%

\section{Preliminaries: Ideals and Algebraic Matroids}
\label{sec:prelimAlgMatroids}

Recall that a set of vectors in a vector space is linearly dependent if there is a non-trivial linear relationship between them. Similarly, given a finite collection $A$ of complex numbers, we say that $A$ is \emph{algebraically dependent} if there is a non-trivial polynomial relationship between the numbers in $A$.

More precisely, let $k$ be a field (e.g.\ $k=\q$) and $k\subset F$ a field extension of $k$. Let $A=\{\alpha_1,\dots,\alpha_n\}$ be a finite subset of $F$.

\begin{definition}We say that $A$ is algebraically dependent over $k$ if there is a non-zero (multivariate) polynomial with coefficients in $k$ vanishing on $A$. Otherwise, we say that $A$ is algebraically independent over $k$.\end{definition}

\subparagraph{Algebraic independence and algebraic matroids.}
\label{app:subsection:AlgDepMat} 
It was noticed by van der Waerden that the algebraically independent subsets $A$ of a finite subset $E$ of $F$ satisfy matroid axioms \cite{VDWmoderne,VDW2} and therefore define a matroid called \emph{the algebraic matroid on $E$ over $k$}.

\begin{definition}\label{def:algmat}Let $k$ be a field and $k\subset F$ a field extension of $k$. Let $E=\{\alpha_1,\dots,\alpha_n\}$ be a finite subset of $F$. The \emph{algebraic matroid on $E$ over $k$} is the matroid $(E,\mathcal I)$ such that $I\in\mathcal I$ if and only if $I$ is algebraically independent over $k$.
\end{definition}

In this paper we use an equivalent definition of algebraic matroids in terms of polynomial ideals. Before stating this equivalent defintion, we will first recall some elementary definitions and properties of ideals in polynomial rings. For a general reference on polynomial rings the reader may consult \cite{Lang}.

\subparagraph{Notations and conventions.} To keep the presentation focused on the goal of the paper, we refrain from giving the most general form of a statement or a proof. We work over the field of rational numbers $\q$. In this section, the set of variables $X_n$ denotes $X_n = \{x_i: 1\leq i\leq n\}$. Polynomial rings $R$ are always of the form $R=\q[X]$, over sets of variables $X\subset X_n$.  The \emph{support} $\supp f$ of a polynomial $f\in \q[X_n]$ is the set of indeterminates appearing in $f$. The degree of a variable $x$ in a polynomial $f$ is denoted by $\deg_xf$.

\subparagraph{Polynomial ideals.} A set of polynomials $I \subset\q[X]$ is an \emph{ideal of} $\q[X]$ if it is closed under addition and multiplication by elements of $\q[X]$. Every ideal contains the zero ideal $\{0\}$, and if an ideal $I$ contains an element of $\q$, then $I=\q[X]$.
A {\em generating set} for an ideal is a set $S\subset \q[X]$ of polynomials such that every polynomial in the ideal is an algebraic combination (addition and multiplication) of elements in $S$ with coefficients in $\q[X]$. {\em Hilbert Basis Theorem} (see \cite{CoxLittleOshea}) guarantees that every ideal in a polynomial ring has a finite generating set. Ideals generated by a single polynomial are called \emph{principal}. An ideal $I$ is a \emph{prime} ideal if, whenever $fg\in I$, then either $f\in I$ or $g\in I$. A polynomial is {\em irreducible} (over $\q$) if it cannot decomposed into a product of non-constant polynomials in $\q[X]$. A principal ideal is prime iff it is generated by an irreducible polynomial. However, an ideal generated by two or more irreducible polynomials is not necessarily prime. 

Let $I$ be an ideal of $R=\q[X_n]$. A \emph{minimal prime ideal over} $I$ is a prime ideal of $R$ minimal among all prime ideals containing $I$ with respect to set inclusion. By Zorn's lemma a proper ideal of $\q[X_n]$ always has at least one minimal prime ideal above it.


\subparagraph{Dimension.} 
By definition, the \emph{dimension of the ring} $\q[X_n]$ is $n$. This definition of dimension of a polynomial ring is a special case of the more general concept of \textit{Krull dimension} of a commutative ring. The \emph{dimension} $\dim I$ of an ideal $I$ of $\q[X_n]$ is the cardinality of the maximal subset $X\subseteq X_n$ with the property $I\cap \q[X]=\{0\}$.

We say that a (strict) chain $I_0\subset I_1 \subset \cdots \subset I_h$ of prime ideals of $\q[X_n]$ has length $h$. The \emph{codimension} or \emph{height} $\codim I$ of a prime ideal $I$ of $\q[X_n]$ is defined as the supremum of the lengths of maximal chains of prime ideals $\{0\}=I_0\subset I_1 \subset \cdots \subset I_h$ such that $I_h=I$.

In a polynomial ring $\q[X_n]$ all prime ideals have a finite height and any two maximal chains of prime ideals terminating at $I$ have the same height. Furthermore, we have
$\dim I + \codim I = n$. 

An important bound on the codimension of a prime ideal $I\subset\q[X_n]$ is given by: 
\begin{theorem}[Krull's Height Theorem] Let $J\subset\q[X_n]$ be an ideal generated by $m$ elements, and let $I$ be a prime ideal over $J$. Then $\codim I\leq m$. Conversely, if $I$ is a prime ideal such that $\codim I\leq m$, then it is a minimal prime of an ideal generated by $m$ elements.
\end{theorem}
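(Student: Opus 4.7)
The plan is to prove Krull's Height Theorem by induction on $m$, with the base case $m=1$ being Krull's Principal Ideal Theorem (\emph{Hauptidealsatz}). For the base case, suppose $I$ is minimal over a principal ideal $(a)$. First I would localize at $I$ to pass to the Noetherian local ring $R_I$ with maximal ideal $I R_I$; this reduction is legitimate because codimension is preserved under localization at a prime. The goal becomes showing that every prime $Q \subsetneq I R_I$ is a minimal prime of $R_I$. The key technical step uses that $R_I / (a)$ is Artinian (since $I R_I$ is the only prime containing $a$), so the images of the symbolic powers $Q^{(n)}$ form an eventually constant descending chain modulo $(a)$; Nakayama's lemma then forces $Q^{(n)} = Q^{(n+1)}$ in $R_I$ for $n \gg 0$, which after localizing at $Q$ makes $Q R_Q$ nilpotent in $(R_I)_Q$, so $Q$ is a minimal prime of $R_I$.

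For the inductive step, assume the bound for ideals generated by $m-1$ elements and let $I$ be minimal over $J = (a_1, \dots, a_m)$. Consider any strict chain $I_0 \subsetneq I_1 \subsetneq \cdots \subsetneq I_h = I$ of primes. Since $I$ is minimal over $J$, some generator, say $a_m$, lies outside $I_{h-1}$. The standard trick replaces $a_1, \dots, a_{m-1}$ by modifications $b_i = a_i - c_i a_m \in I_{h-1}$ (choosing $c_i \in R$ so that $b_i$ lies in $I_{h-1}$, which is possible because $I_{h-1} + (a_m)$ contains each $a_i$) so that $I$ remains minimal over $(b_1, \dots, b_{m-1}, a_m)$. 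Passing to $\bar R = R/(b_1, \dots, b_{m-1})$ and applying the base case to $\bar I$ minimal over $(\bar a_m)$ shows $\bar I_{h-1}$ is a minimal prime of $\bar R$; by the inductive hypothesis $\codim I_{h-1} \leq m-1$, so $h \leq m$.

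For the converse I would induct on $m = \codim I$. When $m = 0$, $I$ is itself a minimal prime, hence minimal over $(0)$. For the inductive step, pick $a \in I$ avoiding every minimal prime strictly below $I$; this is possible by prime avoidance since a Noetherian ring has only finitely many minimal primes. Any prime $I' \subsetneq I$ with $I'$ minimal over $(a)$ satisfies $\codim I' \leq 1$ by the Hauptidealsatz, and strictly decreases codimension along any chain through $I$. Iterating inside $R/(a)$, the inductive hypothesis yields $a_1, \dots, a_{m-1}$ completing a generating set $(a_1, \dots, a_{m-1}, a)$ over which $I$ is a minimal prime.

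The main obstacle is entirely in the base case: the descending-chain-of-symbolic-powers argument combined with Nakayama is the genuine content of the Hauptidealsatz and requires careful setup in the local Noetherian setting, in particular justifying that the symbolic power filtration stabilizes in the Artinian quotient. Once this is in hand, both the induction on $m$ and the converse are comparatively routine manipulations with prime avoidance and quotient rings.
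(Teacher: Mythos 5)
The paper does not actually prove this theorem: it quotes Krull's Height Theorem as a classical fact and points to Eisenbud (Ch.\ 10) and Jacobson (Ch.\ 7), proving only the corollary that codimension-$1$ primes are principal. Your proposal is therefore the standard textbook proof, which is the right thing to supply, and its overall architecture (Hauptidealsatz via symbolic powers and Nakayama in the localization, induction on $m$, prime avoidance for the converse) is correct. In the base case you do gloss one point: to pass from $Q^{(n)} \subseteq Q^{(n+1)} + (a)$ to the Nakayama-ready identity you must first show $Q^{(n)} = Q^{(n+1)} + aQ^{(n)}$, which uses that $Q^{(n)}$ is $Q$-primary and that $a \notin Q$ (because $I$ is minimal over $(a)$ and $Q \subsetneq I$); only then does Nakayama give $Q^{(n)} = Q^{(n+1)}$.

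The genuine gap is in the inductive step. You justify the replacement $b_i = a_i - c_i a_m \in I_{h-1}$ by asserting that $I_{h-1} + (a_m)$ contains each $a_i$. That containment does not follow from anything you have established and is false in general: in $\mathbb{Q}[x,y]$ take $J = (x^2+y,\; x^3)$, $I = (x,y)$ (which is minimal over $J$ since $\operatorname{rad} J = I$), and the chain $(0) \subset (y) \subset (x,y)$; with $I_{h-1} = (y)$ and the choice $a_m = x^3$ one has $x^2 + y \notin (y, x^3)$. What minimality of $I$ over $J$ actually gives is weaker: after replacing $I_{h-1}$ by a prime maximal among those strictly contained in $I$ (needed so that $I$ is minimal over $I_{h-1} + (a_m)$; an arbitrary strict chain need not have its top step saturated) and localizing at $I$, the ideal $I R_I$ is the \emph{radical} of $(I_{h-1} + (a_m))R_I$. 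Hence only some power $a_i^{n_i}$ lies in $I_{h-1}R_I + a_m R_I$, and the correct modification is $b_i = a_i^{n_i} - c_i a_m$ (working in $R_I$ throughout, which is harmless since codimension is preserved under localization at $I$, exactly as you used in the base case). With this fix $I$ is still minimal over $(b_1, \dots, b_{m-1}, a_m)$, because the radical of that ideal in $R_I$ contains every $a_i$, and the rest of your argument (the principal ideal theorem in $R/(b_1,\dots,b_{m-1})$, then the inductive bound $\codim I_{h-1} \le m-1$) goes through. Two small repairs in the converse: the element $a$ should avoid the finitely many minimal primes of the ideal constructed so far (of $R$ at the first step), not ``minimal primes strictly below $I$''; and since the iteration runs in quotient rings such as $R/(a)$, you should state and prove the theorem for Noetherian rings in general, as the paper's references do, rather than only for $\mathbb{Q}[X_n]$.
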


Krull's Height Theorem holds more generally for all Noetherian rings, see \cite[Chapter 10]{Eisenbud} or \cite[Chapter 7]{Jacobson2}. An immediate consequence of the Height Theorem is that prime ideals of codimension 1 are principal.

\begin{corollary}
	\label{app:cor:principal}
	If $I\subset\q[X_n]$ is a prime ideal of codimension $1$, then $I$ is principal.
\end{corollary}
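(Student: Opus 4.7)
The plan is to apply the converse direction of Krull's Height Theorem with $m=1$, and then exploit the fact that $\q[X_n]$ is a unique factorization domain. By the converse of Krull's Height Theorem, since $I$ is a prime ideal with $\codim I \leq 1$, it must arise as a minimal prime over some principal ideal $\ideal{f} \subseteq \q[X_n]$. In particular, $f$ is a nonzero element of $I$ (nonzero because $\codim I = 1 > 0$, so $I \neq \{0\}$).

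Next, I would factor $f$ into irreducibles in the UFD $\q[X_n]$, writing $f = p_1 p_2 \cdots p_k$. Since $I$ is prime and contains the product $p_1 \cdots p_k$, some factor $p_i$ lies in $I$, giving $\ideal{p_i} \subseteq I$. Because $\q[X_n]$ is a UFD, every irreducible element is prime, so $\ideal{p_i}$ is itself a nonzero prime ideal of $\q[X_n]$. This yields a chain of prime ideals
\[
\{0\} \;\subsetneq\; \ideal{p_i} \;\subseteq\; I.
\]
If the inclusion $\ideal{p_i} \subseteq I$ were strict, this chain would have length at least $2$, contradicting $\codim I = 1$. Hence $I = \ideal{p_i}$, so $I$ is principal, generated by the irreducible polynomial $p_i$.

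There is no real obstacle: once Krull's Height Theorem is invoked in its converse direction, the remainder is a standard unique-factorization argument. The only implicit fact being used beyond what is stated in the excerpt is that $\q[X_n]$ is a UFD (so that irreducible elements are prime); this is a classical consequence of Gauss's lemma and induction on $n$, starting from the fact that $\q$ is a field.
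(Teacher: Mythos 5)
Your proof is correct and follows essentially the same route as the paper's: invoke the converse of Krull's Height Theorem to realize $I$ as a minimal prime over a principal ideal $\ideal{f}$, use primeness of $I$ to extract an irreducible factor $p$ of $f$ lying in $I$, and conclude $I=\ideal{p}$. The only cosmetic difference is the last step, where you rule out a strict inclusion $\ideal{p}\subsetneq I$ via the chain $\{0\}\subsetneq\ideal{p}\subsetneq I$ contradicting $\codim I=1$, whereas the paper appeals directly to the minimality of $I$ over $\ideal{f}$; both are valid.
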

\begin{proof}By Krull's Height Theorem $I$ is minimal over the ideal $\ideal f$ generated by some $f\in I$. If $f$ is not irreducible, then it has an irreducible factor $p$ that is in $I$. Therefore, we have
$\{0\}\subset \ideal f \subseteq I\text{ and }\{0\}\subset \ideal f \subseteq \ideal{p}$.
By minimality we must have $I=\ideal{p}$.
\end{proof}

\subparagraph{Algebraic matroid of a prime ideal. } Intuitively, a collection of variables is {\em independent} if it is not constrained by any polynomial in the ideal, and {\em dependent} otherwise. Thus the \emph{algebraic matroid}  $\amat I$ induced by the ideal is, informally, a matroid 
on the ground set of variables $X_n$ whose independent sets are subsets of variables that are {\em not} supported by any polynomial in the ideal.  Its {\em dependent sets} are supports of polynomials in the ideal.

Every algebraic matroid of a prime ideal arises as an algebraic matroid of a field extension in the sense of Definition \ref{def:algmat}, and vice-versa. This equivalence is well-known and we include it for completeness. 

Formally, let $I$ be a prime ideal of the polynomial ring $\q[X_n]$. We define a matroid on $X_n$, depending on the ideal $I$, called \emph{the algebraic matroid of} $I$ and denoted $\amat I$, in the following way.

The quotient ring $\q[X_n]/I$ is an integral domain with a well defined fraction field $K=\ffield{(\q[X_n]/I)}$ which contains $\q$ as a subfield. The image of $X_n$ under the canonical injections
\[\q[X_n]\hookrightarrow \q[X_n]/I\hookrightarrow\ffield{(\q[X_n]/I)}=K\]
is the subset $\{\overline{x_1},\dots,\overline{x_n}\}$ of $K$, where $\overline{x_j}$ denotes the equivalence class of $x_j$ in both $\q[X_n]/I$ and $K$.

Let $X$ be a non-empty subset of $X_n$. Consider its image $\overline X$ in $K$ under the canonical injections. For clarity, let $X=\{x_1,\dots,x_i\}$ and $\overline X=\{\overline{x_1},\dots,\overline{x_i}\}$ for some fixed $i\leq n$. The set $\overline X$ is by definition algebraically dependent over $\q$ if and only if there exists a non-zero polynomial $f\in \q[x_1,\dots,x_i]$ vanishing on $\overline X$, i.e.\
\[f(\overline{x_1},\dots,\overline{x_i})=\overline 0.\]
Clearly, $\overline X$ is algebraically dependent over $\q$ if and only if $f(x_1,\dots,x_i)\in I$, that is if and only if
\[I\cap \q[X]\neq\{0\},\]
where $\q[X]$ denotes the ring of polynomials supported on subsets of $X$.
Similarly, $\overline X$ is algebraically independent over $\q$ if and only if
\[I\cap \q[X]=\{0\}.\]

\begin{definition}
	Let $I$ be a prime ideal in the polynomial ring $\q[X_n]$. The \emph{algebraic matroid of} $I$, denoted $\amat I$, is the matroid $(X_n,\mathcal I)$  on the ground set $X_n=\{x_1,\dots,x_n\}$ with
 	\[\mathcal I=\{X\subseteq X_n\mid I\cap \q[X]=\{0\}\},\]
where $\q[X]$ denotes the ring of polynomials supported on subsets of $X$.	
\end{definition}

\subparagraph{Equivalence of the two definitions} The above construction shows that any algebraic matroid with respect to a prime ideal $I\subset \q[X_n]$ can be realized as an algebraic matroid over $\q$ with the ground set $\{\overline{x_1},\dots,\overline{x_n}\}$ in the field extension $\ffield{(\q[X_n]/I)}$ of $\q$. Conversely, given a set of elements $E=\{\alpha_1,\dots,\alpha_n\}$ in a field extension of $\q$, we can realize any algebraic matroid $\mathcal M$ on $E$ over $\q$ as an algebraic matroid of a prime ideal of $\q[X_n]$ in the following way: let $\varphi$ be the homomorphism
\[\varphi\colon \q[X_n]\to \q(\alpha_1,\dots,\alpha_n)\]
mapping $x_i\mapsto\alpha_i$ for all $i\in\{1,\dots,n\}$ and $a\mapsto a$ for all $a\in \q$. Let $A\subset \{\alpha_1,\dots,\alpha_n\}$ be a dependent set in $\mathcal M$. Then $A$ vanishes on a polynomial in $\q[X_n]$ so the kernel $\ker\varphi$ is non-zero, and clearly any polynomial in $\ker\varphi$ defines a dependency in $\mathcal M$. Therefore, if we denote by $\q[A]$ the ring of polynomials supported on subsets of $\varphi^{-1}(A)$, we have
\[\ker\varphi\cap k[A]\neq{0}\]
if and only if $A$ is a dependent set of $\mathcal M$.

For the rest of the paper we will work exclusively with algebraic matroids of prime ideals.

\subparagraph{Circuits and circuit polynomials.} A {\em circuit} is a minimal set of variables supported by a polynomial in $I$. A polynomial whose support is a circuit is called a {\em circuit polynomial}. A theorem of Lovasz and Dress \cite{DressLovasz} states that a {\em circuit polynomial $p_C$ is unique} in the ideal with the given support $C\subset X_n$, up to multiplication by a constant (we'll just say, shortly, that it is {\em unique}). Furthermore, the circuit polynomial is {\em irreducible}. 

We retain the following property, stating that {\em circuit polynomials generate elimination ideals supported on circuits.} 

\begin{theorem} 
	\label{app:thm:circuitPolyPrincipalIdeal}
	Let $I$ be a prime ideal in $\q[X]$ and $C\subset X$ a circuit of the algebraic matroid $\amat I$. The ideal $I\cap \q[C]$ is principal and generated by an irreducible circuit polynomial $p_C$, which is unique up to multiplication by a constant.
\end{theorem}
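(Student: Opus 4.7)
The plan is to identify $I \cap \q[C]$ as a prime ideal of $\q[C]$ of codimension exactly one, and then invoke Corollary \ref{app:cor:principal} to obtain the principal generator. First, the proposition proved just before Proposition \ref{prop:resultantElimination} guarantees that the elimination ideal $I \cap \q[C]$ is prime in $\q[C]$ because $I$ is prime in $\q[X]$. So we already know we are dealing with a prime ideal in the polynomial ring $\q[C]$ of dimension $|C|$.

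Next I would compute the dimension of $J := I \cap \q[C]$, viewed as an ideal of $\q[C]$. By the paper's definition, $\dim J$ equals the maximum cardinality of a subset $Y \subseteq C$ with $J \cap \q[Y] = \{0\}$. Since $Y \subseteq C$ we have $J \cap \q[Y] = I \cap \q[Y]$. For every proper subset $Y \subsetneq C$ the circuit property of $C$ in $\amat I$ forces $Y$ to be algebraically independent, so $I \cap \q[Y] = \{0\}$; whereas for $Y = C$ dependence of $C$ gives $I \cap \q[C] \neq \{0\}$. Therefore $\dim J = |C|-1$ and $\codim J = 1$. Corollary \ref{app:cor:principal} applied in $\q[C]$ then yields $J = \ideal{p_C}$ for some irreducible $p_C \in \q[C]$.

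It remains to verify that the support of $p_C$ is all of $C$ (so that $p_C$ is a genuine circuit polynomial) and to address uniqueness. If $\supp p_C$ were a proper subset $C' \subsetneq C$, then $p_C$ would be a nonzero element of $I \cap \q[C']$, but the minimality of $C$ as a dependent set forces $I \cap \q[C'] = \{0\}$, a contradiction; hence $\supp p_C = C$. For uniqueness, any other generator of the principal ideal $\ideal{p_C}$ in $\q[C]$ must differ from $p_C$ by a unit, and the units of the polynomial ring $\q[C]$ are exactly the nonzero constants in $\q$.

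I expect no serious obstacle in this argument; the whole proof is an organized application of results already established in the preliminaries. The only point requiring a bit of care is keeping track of which ring each ideal lives in when invoking Corollary \ref{app:cor:principal} (stated for $\q[X_n]$ but equally valid for $\q[C]$, since Krull's Height Theorem holds in any Noetherian polynomial ring), and making sure that the codimension computation uses the circuit axiom in both directions, namely that $C$ is dependent while every proper subset of $C$ is independent.
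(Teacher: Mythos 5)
Your proof is correct and follows essentially the same route as the paper: both establish that $I\cap\q[C]$ is a prime ideal of dimension $|C|-1$ (using the circuit property in both directions), hence of codimension $1$, and then invoke Corollary~\ref{app:cor:principal} to get principality. The only cosmetic difference is that you re-derive the full support, irreducibility and uniqueness of the generator from principality, whereas the paper imports uniqueness and irreducibility of $p_C$ from the Dress--Lov\'asz theorem stated just before; your version is slightly more self-contained but not a different argument.
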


\begin{proof}Since $C$ is a circuit the ideal $I\cap \q[C]$ has dimension at least equal to $|C|-1$. It can not have dimension greater than or equal to $|C|$ since $\dim k[C]=|C|$. Therefore $I\cap \q[C]$ has dimension $|C|-1$ and codimension $1$. By Corollary \ref{app:cor:principal}, \  $I\cap \q[C]$ is principal, and since $p_C\in I\cap \q[C]$ is irreducible over $\q$, it also generates $I\cap \q[C]$.\end{proof}

\section{The Cayley-Menger ideal and its algebraic matroid}
\label{sec:prelimCMideal}

In this section we introduce the 2D Cayley-Menger ideal $\cm n$. We then define the corresponding {\em circuit polynomials} and their supports, and make the connection with combinatorial rigidity circuits.

We will show that the algebraic matroid of $\cm n$ is isomophic to the $(2,3)$-sparsity matroid $\smat n$. This equivalence is well-known, however we were not able to track down the original reference, and include a proof for completeness. 


\subparagraph{The Cayley-Menger  ideal and its algebraic matroid.} We use variables $X_n = \{x_{i,j}: 1\leq i<j\leq n\}$ for unknown squared distances between pairs of points. The {\em distance matrix} of $n$ labeled points is the matrix of squared distances between pairs of points. The {\em Cayley matrix} is the distance matrix bordered by a new row and column of 1's, with zeros on the diagonal: 

\vspace{-20pt}
\begin{center}
$$
\begin{pmatrix}
	0 & 1 & 1 & 1 & \cdots & 1\\
	1 & 0 & x_{1,2} & x_{1,3} & \cdots & x_{1,n}\\
	1 & x_{1,2} & 0 & x_{2,3} & \cdots & x_{2,n}\\
	1 & x_{1,3} & x_{2,3} & 0 & \cdots & x_{3,n}\\
	\vdots & \vdots & \vdots &\vdots &\ddots &\vdots\\
	1 & x_{1,n} & x_{2,n} & x_{3,n} & \cdots & 0
\end{pmatrix}
$$
\end{center}

\noindent
Cayley's Theorem says that, if the distances come from a point set in the Euclidean space $\reals^d$, then the rank of this matrix must be at most $d+2$. Thus all the $(d+3)\times(d+3)$ minors of the Cayley-Menger matrix should be zero.  These minors induce polynomials in $\q[X_n]$ which generate the $(n,d)$-Cayley-Menger ideal. They are called the \emph{standard generators}, are {\em homogeneous polynomials} with integer coefficients and are {\em irreducible} over $\q$. The $(n,d)$-Cayley-Menger ideal is a {\em prime ideal} of dimension $dn-{\binom{d+1}{2}}$ \cite{borcea:cayleyMengerVariety:2002, Giambelli, HarrisTu, JozefiakLascouxPragacz} and codimension $\binom{n}{2}-dn+{\binom{d+1}{2}}$.
We work with the {\em 2D Cayley-Menger ideal} $\cm n$, generated by the $5\times 5$ minors of the Cayley matrix. The algebraic matroid of the 2D Cayley-Menger ideal $\cm n$ is denoted by $\amat {\cm n}$.

\subparagraph{The algebraic matroid of $\cm{n}^d$.} As defined in Section \ref{sec:prelimAlgMatroids}, the algebraic matroid $\amat{\cm{n}^d}$ is the matroid on the ground set $X_n=\{x_{i,j}\mid 1\leq i<j\leq n\}$ where $X\subseteq X_n$ is independent if
\[\cm{n}^d\cap~\q[X]=\{0\},\]
where, as before, $\q[X]$ denotes the ring of polynomials over $\q$ supported on the indeterminates in $X$. 

\begin{proposition}The rank of $\amat{\cm{n}^d}$ is equal to $\dim{\cm{n}^d}=dn-{\binom{d+1}{2}}$.\end{proposition}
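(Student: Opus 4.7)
The plan is to observe that this statement follows essentially by unpacking the two definitions given in the preceding sections, plus the known dimension formula for the Cayley-Menger ideal that was already cited. So I do not expect any real obstacle; the proof is a short chain of equalities.

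First I would recall that the rank of a matroid is, by definition, the common cardinality of its bases, equivalently the maximum size of an independent set. Applied to the algebraic matroid $\amat{\cm{n}^d}$ on ground set $X_n = \{x_{i,j} : 1 \le i < j \le n\}$ with independent sets
\[
\mathcal I = \{X \subseteq X_n \mid \cm{n}^d \cap \q[X] = \{0\}\},
\]
this reads
\[
\rank\amat{\cm{n}^d} \;=\; \max\{\,|X| : X \subseteq X_n,\ \cm{n}^d \cap \q[X] = \{0\}\,\}.
\]
But the right-hand side is literally the definition of $\dim \cm{n}^d$ given in Section~\ref{sec:prelimAlgMatroids} (``the cardinality of the maximal subset $X \subseteq X_n$ with the property $I \cap \q[X] = \{0\}$''). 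Hence $\rank\amat{\cm{n}^d} = \dim \cm{n}^d$.

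It then remains only to substitute the value of $\dim \cm{n}^d$. This is a classical fact about the Cayley-Menger ideal, already stated in the paragraph introducing $\cm{n}^d$ and attributed to \cite{borcea:cayleyMengerVariety:2002, Giambelli, HarrisTu, JozefiakLascouxPragacz}: the ideal $\cm{n}^d$ is prime of dimension $dn - \binom{d+1}{2}$ (equivalently, of codimension $\binom{n}{2} - dn + \binom{d+1}{2}$). Combining this with the preceding identity yields
\[
\rank\amat{\cm{n}^d} \;=\; \dim \cm{n}^d \;=\; dn - \binom{d+1}{2},
\]
as claimed. The only ``content'' in the argument is the cited dimension of $\cm{n}^d$; everything else is a direct translation between the matroid-theoretic and ideal-theoretic vocabularies set up earlier.
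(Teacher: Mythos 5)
Your proof is correct and follows essentially the same route as the paper: the paper's own argument is simply ``immediate from the definition of dimension of an ideal,'' which is exactly the unpacking you perform, combined with the cited dimension formula $\dim\cm{n}^d = dn-\binom{d+1}{2}$ for the Cayley--Menger ideal.
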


\begin{proof} Immediate from the definition of dimension of an ideal in a ring of polynomials (Section~\ref{sec:prelimAlgMatroids}).
\end{proof}

When $d=2$ the rank of $\cm{n}^2$ is precisely the rank of the $(2,3)$-sparsity matroid $\smat n$ on $n$ vertices. In fact, the two matroids are isomorphic. For the rest of the paper we fix $d=2$ and abbreviate $\cm{n}^2$ with just $\cm{n}$.

\begin{theorem}
	\label{theorem:algSparseIsomorphic}
	The algebraic matroid $\amat{\cm{n}}$ and the $(2,3)$-sparsity matroid $\smat n$ are isomorphic.
\end{theorem}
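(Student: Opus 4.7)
The plan is to exhibit the natural bijection between the ground sets $X_n$ and $E_n$ (mapping the variable $x_{i,j}$ to the edge $ij$ of $K_n$) and show that a subset $X\subseteq X_n$ is independent in $\amat{\cm n}$ if and only if the corresponding edge set $E\subseteq E_n$ is $(2,3)$-sparse, i.e.\ independent in $\smat n$. The bridge between the two matroids is the classical squared-distance parametrization together with the Jacobian criterion for algebraic independence.

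First, I would introduce the squared-distance map $\phi\colon(\mathbb R^2)^n\to\mathbb R^{\binom{n}{2}}$ defined by $\phi(p_1,\dots,p_n)_{i,j}=\|p_i-p_j\|^2$, and identify the $x_{i,j}$ with its coordinate functions. Cayley's theorem (via the $5\times 5$ minors of the Cayley matrix) shows that the image of $\phi$ lies in the variety $V(\cm n)$, and a dimension count using the already-stated fact $\dim\cm n=2n-3$ together with the fact that a generic 2D point set has a 3-dimensional group of rigid motions acting on it shows that $\phi$ is dominant. Consequently the fraction field $\ffield{(\q[X_n]/\cm n)}$ embeds into $\q(p_1,\dots,p_n)$, and each $\overline{x_{i,j}}$ corresponds to the regular function $\|p_i-p_j\|^2$.

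Next I would invoke the Jacobian criterion for algebraic (in)dependence over the function field of a variety: the set $\{\overline{x_{i,j}}:(i,j)\in E\}$ is algebraically independent over $\q$ if and only if the differentials $d\phi_{i,j}=2(p_i-p_j)\cdot(dp_i-dp_j)$ for $(i,j)\in E$ are linearly independent as row vectors on a Zariski-dense open subset of $(\mathbb R^2)^n$. The matrix whose rows are these differentials is, up to the scalar $2$, precisely the \emph{rigidity matrix} of the complete bar-and-joint framework on $n$ vertices. Thus the algebraic matroid $\amat{\cm n}$ coincides with the \emph{generic 2D rigidity matroid}, whose independent sets are the edge sets $E$ for which the corresponding rows of the rigidity matrix are generically linearly independent.

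To finish, I would invoke Laman's Theorem~\ref{thm:laman} (or equivalently the Maxwell--Laman--Asimow--Roth characterization) which identifies the generic 2D rigidity matroid with the $(2,3)$-sparsity matroid $\smat n$. Combining this with the identification above yields $\amat{\cm n}\cong\smat n$ under the bijection $x_{i,j}\leftrightarrow ij$. The main obstacle is a careful invocation of the Jacobian criterion in the form appropriate here: one needs that $\phi$ is dominant onto $V(\cm n)$ (so that algebraic independence in $\ffield{(\q[X_n]/\cm n)}$ is detected by generic linear independence of the pullback differentials), and that the resulting differential matrix is the rigidity matrix. Both statements are standard but deserve explicit citation (e.g.\ to Rosen's thesis \cite{rosen:thesis} for the Jacobian criterion and to \cite{whiteley:Matroids:1996} for the identification of the row matroid of the rigidity matrix with $\smat n$), rather than being re-derived in full.
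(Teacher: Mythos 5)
Your proposal is correct and follows essentially the same route as the paper: identify $\amat{\cm n}$ with the generic 2D linear rigidity matroid via the Jacobian criterion (the paper cites Ingleton for this), observe that the Jacobian of the squared-distance map at a generic configuration is, up to a factor of $2$, the rigidity matrix $R_{(K_n,p)}$, and then invoke Laman's Theorem to identify that linear matroid with $\smat n$. The extra care you take about dominance of the squared-distance parametrization is exactly the point the paper handles by noting that the Cayley--Menger variety is the Zariski closure of the image of the edge function, so no genuinely different argument is involved.
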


We will prove the equivalence of $\amat{\cm{n}}$ and $\smat n$ by proving that both are equivalent to the $2$-dimensional generic linear rigidity matroid that we now introduce.

Let $G=(V,E)$ be a graph and $(G,p)$ a $2D$ bar-and-joint framework with points $\{p_1,\dots,p_n\}\subset\mathbb R^2$.

\begin{definition}The rigidity matrix $R_{(G,p)}$ (or just $R_G$ when there is no possibility of confusion) of the bar-and-joint framework $(G,p)$ is the $|E|\times n$ matrix with columns indexed by the vertices $\{1,2,\dots,n\}$ and rows indexed by the edges $ij\in E$ with $i<j$. The $i$-th entry in the row $ij$ is $p_i-p_j$, the $j$-th entry is $p_j-p_i$, and all other entries are $0$.
\end{definition}

The rigidity matrix is defined up to an order of the vertices and the edges; to eliminate this ambiguity we fix the order on the vertices as $1<2<\cdots<n$ and we order the edges $ij$ with $i<j$ lexicographically. 

For example, let $G=K_4$. Then the rows are ordered as $12$, $13$, $14$, $23$, $24$ and $34$ and the corresponding rigidity matrix $R_{K_4}$ is given by
\[R_{K_4}=\begin{pmatrix}
	p_1-p_2 & p_2 - p_1 & 0 & 0\\
	p_1-p_3 & 0 & p_3-p_1 & 0\\
	p_1-p_4 & 0 & 0 & p_4 - p_1\\
	0 & p_2-p_3 & p_3-p_2 & 0\\
	0 & p_2-p_4 & 0 & p_4 - p_2\\
	0 & 0 & p_3-p_4 & p_4-p_3\\
\end{pmatrix}.\]

\begin{definition}Let $G=(V,E)$ be a graph and $(G,p)$ a $2D$ bar-and-joint framework. The \emph{2-dimensional linear rigidity matroid} $\lmat{(G,p)}$ is the matroid $(E,\mathcal I)$ where $I\in\mathcal I$ if and only if the rows of $R_{(G,p)}$ indexed by $I$ are linearly independent.
\end{definition}

Note that the matroid $\lmat{(G,p)}$ depends on the plane configuration $p$. For example, if $G=K_4$, $p$ a configuration in which at most two vertices of $K_4$ are on a line, and $q$ a configuration in which the vertices $\{2,3,4\}$ are on the same line, then $\rank\lmat{K_4,p}>\rank\lmat{K_4,q}$.

Let $G$ be a graph and consider the set $\mathcal X_G$ of all possible plane configurations $p$ for $G$.  

\begin{definition} We say that a 2D bar-and-joint framework $(G,p)$ is \emph{generic} if the rank of the row space of $R_{(G,p)}$ is maximal among all plane configurations in $\mathcal X_G$.
\end{definition}

If $p$ and $p'$ are distinct generic plane configurations for a graph $G$, the 2D linear matroids $\lmat{(G,p)}$ and $\lmat{(G,p')}$ are isomorphic \cite[Theorem 2.2.1]{graverServatiusServatius}, hence the following matroid is well-defined.

\begin{definition}Let $G$ be a graph. The \emph{2-dimensional generic linear matroid} $\lmat{G}$ is the 2D linear matroid $\lmat{(G,p)}$ where $p$ is any generic plane configuration for $G$.
\end{definition}

That for a given graph $G$ on $n$ vertices the generic linear matroid $\lmat{(G,p)}$ and the sparsity matroid $\smat n$ are isomorphic follows from Laman's theorem (Theorem \ref{thm:laman}).


We now have to show that $\amat{\cm{n}}$ is equivalent to the generic linear rigidity matroid $\lmat{K_{n}}$. This equivalence will be the consequence of a classical result of Ingleton \cite[Section 6]{Ingleton} (see also \cite[Section 2]{EhrenborgRota}) stating that algebraic matroids over a field $k$ of characteristic zero are linearly representable over an extension of $k$, with the linear representation given by the Jacobian. We now note that the Cayley-Menger variety is realized as the Zariski closure of the image of the map $f=(f_{ij})_{\{i,j\}\in{\binom{n}{2}}}\colon(\mathbb C^2)^n \to \mathbb C^{\binom{n}{2}}$ given by 
\[(p_1,\dots,p_n)\mapsto(||p_i-p_j||^2)_{\{i,j\}\in{\binom{n}{2}}}.\]
The Jacobian of the edge function $(p_1,\dots,p_n)\mapsto(||p_i-p_j||^2)_{\{i,j\}\in{\binom{n}{2}}}$ at a generic point in $(\mathbb C^2)^n$ is precisely the matrix $2R_{(K_n,p)}$ for a generic configuration $p$.

{\bf This completes the proof of Theorem~\ref{theorem:algSparseIsomorphic}.} From now on, we will use the isomorphism to move freely between the formulation of algebraic circuits as subsets of variables $X\subset X_n$ and their graph-theoretic interpretation as graphs that are rigidity circuits. Given a (rigidity) circuit $C$, we denote by $p_C$ the corresponding {\em circuit polynomial} in the Cayley-Menger ideal $\cm n$.

\subparagraph{Comments: beyond dimension 2?}
Note that the $d$-dimensional linear rigidity matroid $\lmat n$ and the algebraic matroid $\amat{\cm{n}^d}$ of the $(n,d)$-Cayley-Menger matroid are isomorphic by the same Jacobian argument as above. However, the equivalence between the 2D sparsity matroid $\smat n$ and $\amat{\cm{n}}$ does not extend to higher dimension to some known graphical matroid. The generalization $dn - \binom{d+1}{2}$  of the $(2n-3)$-sparsity condition from dimension $2$ to dimension $d$, called Maxwell's sparsity \cite{maxwell:equilibrium:1864}, does not satisfy matroid axioms. In fact, Maxwell's sparsity is known to be only a necessary but not sufficient condition for minimal rigidity in dimensions $d\geq 3$.

\subparagraph{Circuits of $\amat{\cm n}$ and circuit polynomials in $\cm n$.} The isomorphism between the algebraic matroid $\amat {\cm n}$ and the sparsity matroid $\smat n$ immediately implies that the sets of circuits of these two matroids are in a one-to-one correspondence. We will identify a sparsity circuit $C=(V_C,E_C)\in \smat n$, with the circuit
\[\{x_{i,j}\mid ij\in E_C\}\in\amat{\cm n},\]
and in the same way the dependent sets of $\smat n$ will be identified with the dependent sets of $\amat{\cm n}$. Conversely, we will identify the support of a polynomial $f\in \q[\{x_{i,j}\mid 1\leq i<j\leq n\}]$ with the graph $S_f=(V_f,E_f)$ where
\[V_f=\{i\mid x_{i,j}\text{ or }x_{h,i}\in \supp f\}\text{ and }E_f=\{ij\mid x_{i,j}\in\supp f\}.\]

Recall that the circuit polynomial of a circuit $C$ in $\cm n$ is the (up to multiplication with a unit) unique polynomial $p_C$ irreducible over $\mathbb Q$ such that $\supp{p_C}=C$. Hence we will identify from now on a circuit $C$ with the support $\supp p_C$ of its circuit polynomial $p_C$. Furthermore, $p_C$ generates the elimination ideal $\cm n\cap \q[C]$.

\begin{proposition}\label{prop:circHomogeneous}Circuit polynomials in $\cm n$ are homogeneous polynomials.
\end{proposition}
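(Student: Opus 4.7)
The plan is to exploit the fact that $\cm n$ is a \emph{homogeneous} ideal. As noted earlier in the paper, the standard generators of $\cm n$, namely the $5 \times 5$ minors of the Cayley matrix, are homogeneous polynomials. It is a standard fact of commutative algebra that an ideal generated by homogeneous elements contains the homogeneous components of every one of its elements. (If needed, this can be seen in one line: writing $f = \sum_i h_i g_i$ with each $g_i$ homogeneous, the degree-$d$ part of $f$ equals $\sum_i h_i^{(d-\deg g_i)} g_i$, which again lies in the ideal.)

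First, I decompose the circuit polynomial into its homogeneous components, $p_C = \sum_d p_C^{(d)}$. By the previous paragraph, each $p_C^{(d)}$ lies in $\cm n$. Since the support of $p_C^{(d)}$ is contained in $\supp p_C = C$, each component in fact lies in the elimination ideal $\cm n \cap \q[C]$.

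Next, I invoke Theorem~\ref{app:thm:circuitPolyPrincipalIdeal}, which says that $\cm n \cap \q[C]$ is a principal ideal of $\q[C]$ generated by $p_C$. Consequently, $p_C$ divides each nonzero $p_C^{(d)}$ in $\q[C]$. Set $D = \deg p_C$. Divisibility forces $D \leq d$ for every $d$ with $p_C^{(d)} \neq 0$, while the very definition of ``degree-$d$ homogeneous component of $p_C$'' forces $d \leq D$. Hence $p_C^{(d)} = 0$ whenever $d \neq D$, so $p_C = p_C^{(D)}$ is homogeneous.

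The only potential obstacle is the commutative-algebra lemma that an ideal generated by homogeneous elements is homogeneous; this is entirely routine. Everything else is a short degree-comparison argument, leveraging the principal-generator property established in Theorem~\ref{app:thm:circuitPolyPrincipalIdeal}.
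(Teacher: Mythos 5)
Your proof is correct, but it takes a different route from the paper. The paper argues via Gr\"obner bases: since $\cm n$ is generated by homogeneous polynomials, any reduced Gr\"obner basis of $\cm n$ consists of homogeneous polynomials; choosing an elimination order in which the variables outside $C$ are largest, the basis intersected with $\q[C]$ generates the elimination ideal $\cm n\cap\q[C]=\ideal{p_C}$ and hence contains $p_C$ (up to a scalar), so $p_C$ is homogeneous. You instead bypass Gr\"obner bases entirely: you use the elementary fact that a homogeneous ideal contains the homogeneous components of each of its elements, observe that every component of $p_C$ is supported inside $C$ and therefore lies in $\cm n\cap\q[C]=\ideal{p_C}$ by Theorem~\ref{app:thm:circuitPolyPrincipalIdeal}, and then conclude by a degree comparison (a nonzero multiple of $p_C$ has total degree at least $\deg p_C$, while every component has degree at most $\deg p_C$) that only one component survives. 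Both arguments rest on the homogeneity of the standard generators; yours is more self-contained and avoids invoking the elimination theorem and the homogeneity of reduced Gr\"obner bases, at the price of explicitly leaning on the principality result of Theorem~\ref{app:thm:circuitPolyPrincipalIdeal} (which the paper's proof only uses implicitly through the elimination ideal). The one step you flag as an obstacle, that an ideal generated by homogeneous elements is graded, is indeed routine and your one-line justification of it is fine.
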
 
\begin{proof}Since $\cm n$ is generated by homogeneous polynomials, any reduced \grobner{} basis of $\cm n$ consists only of homogeneous polynomials (see e.g.\ Theorem 2 in \S3 of Chapter 8 of \cite{CoxLittleOshea}). Now if $C$ is a circuit in $\cm n$, we can choose an elimination order in which all the indeterminates in the complement of $C$ are greater than those in $C$. The \grobner{} basis $\mathcal G_C$ with respect to that elimination order will necessarily contain $p_C$ because $\mathcal G_C\cap \q[C]$ must generate the elimination ideal $\cm n\cap \q[C]$.\end{proof}

By Theorem \ref{theorem:algSparseIsomorphic} any circuit polynomial in $\cm n$ is supported on a sparsity circuit. Therefore, the simplest examples of circuit polynomials in $\cm n$ are the standard generators that are supported on the $\binom n4$ complete graphs on 4 vertices in $[n]$.
 

{\bf Example: the $K_4$ circuit.} Among the generators of $\cm n$ we find the smallest circuit polynomials. Their supports are in correspondence with the edges of complete graphs $K_4$ on all subsets of $4$ vertices in $[n]$. The $K_4$ circuit polynomial (given below on vertices $1234$) is homogeneous of degree 3, has 22 terms and has degree 2 in any of its variables.
	\begin{align*}
	p_{K_4^{1234}}&=x_{3,4} x_{1,2}^2+x_{3,4}^2 x_{1,2}+x_{1,3} x_{2,3} x_{1,2}-x_{1,4} x_{2,3} x_{1,2}-x_{1,3} x_{2,4} x_{1,2}\\
	&+x_{1,4}^2 x_{2,3}+x_{1,3} x_{2,4}^2+x_{1,4} x_{2,4} x_{1,2}-
	x_{1,3} x_{3,4} x_{1,2}-x_{1,4} x_{3,4} x_{1,2}\\
	&+x_{1,3}^2 x_{2,4}+x_{1,4} x_{2,3}^2-x_{2,3} x_{3,4} x_{1,2}-x_{2,4} x_{3,4} x_{1,2}
	+x_{2,3} x_{2,4} x_{3,4}\\
	&-x_{1,3} x_{2,4} x_{3,4}-x_{1,3} x_{1,4} x_{2,3}
	-x_{1,3} x_{1,4} x_{2,4}-x_{1,3} x_{2,3} x_{2,4}\\
	&-x_{1,4} x_{2,3} x_{2,4}+x_{1,3} x_{1,4} x_{3,4}-x_{1,4} x_{2,3} x_{3,4}
	\end{align*}

\subparagraph{Cayley-Menger variety. } The $(n,d)$-Cayley-Menger variety can also be obtained from the ideal generated by the $(d+1)\times(d+1)$ minors of the symmetric $(n-1)\times(n-1)$ \emph{Gram matrix} $[\text{Gram}_{i,j}]$ on $n$ points defined as follows: we first pick one of the points $\{p_1,\dots,p_n\}$ as a reference point, say $p_n$ and set
\[\text{Gram}_{i,j}=\frac12(x_{i,n}+x_{j,n}-x_{i,j})\text{ for }i,j\in\{1,\dots,n-1\}\]
with $x_{i,i}=0$ so that $\text{Gram}_{i,i}=x_{i,n}$.

For our purposes, it is preferable to work with the Cayley-Menger matrix and the standard generators as there will be no need for a reference point.


\subparagraph{Resultants in the Cayley-Menger ideal.} Let $f,g$ be two polynomials in the Cayley-Menger ideal with $x_{ij}$ one of their common variables. We treat them as polynomials in $x_{ij}$, therefore the coefficients are themselves polynomials in the remaining variables. Our {\em main observation}, which motivated the  definition of the combinatorial resultant, is that the entries in the Sylvester matrix are polynomials supported exactly on the variables corresponding to the {\em combinatorial resultant} of the supports of $f$ and $g$ on elimination variable (edge) $ij$. 

The following lemma, whose proof follows immediately from Proposition \ref{prop:resultantElimination}, will be used frequently in the rest of the paper. 

\begin{lemma}
	\label{lem:resultantSupport}
	Let $I$ be an ideal in $\q[X_n]$, $f,g\in I$ with supports $S_f= \supp f$ and $S_g= \supp g$, and let $x_{ij}$ be a common variable in $S_f\cap S_g$. Let $S$ be the combinatorial resultant of the supports $S=\cres{S_f}{S_g}{ij}\subset X_n$ (i.e. viewed as a set of variables). Then $\res{f}{g}{x_{ij}}\in I\cap \q[S]$.
\end{lemma}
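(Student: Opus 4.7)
The plan is to establish the two conditions for membership in $I \cap \q[S]$ separately: first that $\res{f}{g}{x_{ij}}$ lies in $I$, and second that its support is contained in $S = (S_f \cup S_g) \setminus \{x_{ij}\}$.

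For the ideal membership, I would apply Proposition~\ref{prop:resultantElimination} directly. Setting $R = \q[X_n \setminus \{x_{ij}\}]$, both $f$ and $g$ can be viewed as elements of $R[x_{ij}]$ (their coefficients, as polynomials in $x_{ij}$, involve only the other variables). Since $f, g \in I$, Proposition~\ref{prop:resultantElimination} yields $\res{f}{g}{x_{ij}} \in I \cap R$. In particular, the variable $x_{ij}$ does not appear in the resultant.

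For the support condition, I would argue by inspection of the Sylvester matrix. Writing $f = \sum_k a_k x_{ij}^k$ and $g = \sum_k b_k x_{ij}^k$, each coefficient $a_k$ is a polynomial whose support is contained in $S_f \setminus \{x_{ij}\}$, and each $b_k$ has support contained in $S_g \setminus \{x_{ij}\}$. Every entry of $\operatorname{Syl}(f,g,x_{ij})$ is either $0$ or one of these coefficients. The Leibniz expansion of the determinant is therefore a sum of products of such entries, hence a polynomial supported on $(S_f \setminus \{x_{ij}\}) \cup (S_g \setminus \{x_{ij}\}) = (S_f \cup S_g) \setminus \{x_{ij}\} = S$, which is precisely the variable set of $\cres{S_f}{S_g}{ij}$.

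Combining these two observations gives $\res{f}{g}{x_{ij}} \in I \cap \q[S]$, as required. I do not expect any genuine obstacle here: the ideal-theoretic content is already packaged in Proposition~\ref{prop:resultantElimination}, and the support statement is a transparent consequence of the combinatorial definition $\cres{S_f}{S_g}{ij} = (S_f \cup S_g) \setminus \{x_{ij}\}$ matching exactly the variables that can appear in entries of the Sylvester matrix once $x_{ij}$ is treated as the distinguished indeterminate. The only care needed is the standard remark that $\supp a_k \subseteq S_f \setminus \{x_{ij}\}$ and $\supp b_k \subseteq S_g \setminus \{x_{ij}\}$, which is essentially the definition of support.
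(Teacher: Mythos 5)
Your proof is correct and takes essentially the same route as the paper, which derives the lemma immediately from Proposition~\ref{prop:resultantElimination} together with the observation (stated just before the lemma as the paper's ``main observation'') that the entries of the Sylvester matrix are coefficient polynomials supported on the variables of $\cres{S_f}{S_g}{ij}$. Your explicit Leibniz-expansion check of the support is simply a spelled-out version of that remark, so there is nothing to add.
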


\subparagraph{Homogeneous properties.} The standard generators of the Cayley-Menger ideal, in particular those that correspond to $K_4$ graphs, are obviously homogeneous. We apply Proposition~\ref{prop:resultantHomogeneous} to infer that their resultants are themselves homogeneous polynomials and to compute their homogeneous degrees. 


\section{Resultants of circuit polynomials}
\label{sec:algResCircuits}

We are now ready to prove our main result, Theorem~\ref{thm:circPolyConstruction}, by showing that combinatorial resultants are the combinatorial analogue of classical polynomial resultants in the following sense: if a (rigidity) circuit $C$ is obtained as the combinatorial resultant $\cres{A}{B}{e}$ of two circuits $A$ and $B$ with the edge $e$ eliminated, then the resultant $\res{p_{A}}{p_{B}}{x_e}$ of circuit polynomials $p_{A}$ and $p_{B}$ with respect to the indeterminate $x_e$ is supported on $C$ and contained in the elimination ideal $\ideal{p_C}$ generated by the circuit polynomial $p_C$. When $\res{p_{A}}{p_{B}}{x_e}$ is irreducible then it will be equal to $p_C$. However in general $p_C$ will only be one of its irreducible factors over $\mathbb Q$. In fact {\em exactly one factor} (counted with multiplicity) of $\res{p_{A}}{p_{B}}{x_e}$ can correspond to $p_C$ and that factor can be deduced by examining the supports of the factors and performing an ideal membership test on those factors that have the support of $p_C$. Our approach is algorithmic, but its precise complexity analysis depends on answers to a few remaining open questions:

\begin{problem}
Identify sufficient conditions under which $\res{p_{A}}{p_{B}}{x_e}$ is $p_C$.
\end{problem}

\begin{problem}
Identify sufficient conditions for which $\res{p_{A}}{p_{B}}{x_e}$ has exactly one factor (up to multiplicity) supported on $C$. 
\end{problem}
	
\subparagraph{Resultants of circuit polynomials.} In this section we refer to a (combinatorial) circuit as a {\em sparsity} circuit, to avoid confusion with the usage of the same concept (circuit) in the {\em circuit polynomial}.

\begin{lemma}
	\label{lem:nonZeroCircuitPoly}Let $A$ and $B$ be distinct sparsity circuits. The resultant $\res{p_{A}}{p_{B}}{x_e}$ of the circuit polynomials $p_{A}$ and $p_{B}$ with respect to a variable $x_e\in \supp{p_{A}}\cup~\supp{p_{B}}$ is a non-constant polynomial in $\mathbb Q[(\supp{p_{A}}\cup~\supp{p_{B}})\setminus\{x_e\}]$.
\end{lemma}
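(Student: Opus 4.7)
The plan is to show that $\res{p_A}{p_B}{x_e}$ is both nonzero and of positive degree, and hence non-constant. The argument splits naturally on whether $x_e$ lies in both $\supp{p_A}$ and $\supp{p_B}$ or in only one of them.

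First I would handle the degenerate case where $x_e$ appears in only one of the supports, say $\supp{p_A}$. Then $s:=\deg_{x_e}p_B=0$ and $r:=\deg_{x_e}p_A\geq 1$, so the Sylvester matrix is an $r\times r$ diagonal matrix with $p_B$ on the diagonal, giving $\res{p_A}{p_B}{x_e}=p_B^{\,r}$. Since $p_B$ is a circuit polynomial, its support is a rigidity circuit and contains at least the six edges of a $K_4$, so $p_B$ itself is non-constant, and therefore so is $p_B^{\,r}$. The other case is symmetric.

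In the main case $x_e\in\supp{p_A}\cap\supp{p_B}$, I would rule out $\res{p_A}{p_B}{x_e}=0$ and $\res{p_A}{p_B}{x_e}\in\q^{*}$ separately. For nonvanishing, recall that circuit polynomials are irreducible over $\q$, hence irreducible in the UFD $\q[X_n]=\q[X_n\setminus\{x_e\}][x_e]$. If $p_A$ and $p_B$ shared a non-unit common factor in this ring, irreducibility would force $p_A$ and $p_B$ to be associates, so $p_A=c\cdot p_B$ for some $c\in\q^{*}$; but this contradicts $\supp{p_A}\neq\supp{p_B}$, which in turn follows from $A\neq B$. Thus Proposition~\ref{prop:resultantCommonFactor} yields $\res{p_A}{p_B}{x_e}\neq 0$. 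To preclude a nonzero constant, I would invoke homogeneity: by Proposition~\ref{prop:circHomogeneous} each $p_i$ is homogeneous, of some positive degrees $m,n$ with $m\geq r\geq 1$ and $n\geq s\geq 1$. Proposition~\ref{prop:resultantHomogeneous}, which applies since we just showed the resultant is nonzero, then says the resultant is homogeneous of degree
\[
ms+nr-rs \;=\; rs+(m-r)s+(n-s)r \;\geq\; rs \;\geq\; 1,
\]
and therefore it cannot be a constant.

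The only delicate point is the irreducibility/associates argument, but this amounts to bookkeeping once one notes that the UFDs $\q[X_n]$ and $\q[X_n\setminus\{x_e\}][x_e]$ coincide, so the irreducibility of circuit polynomials from Theorem~\ref{app:thm:circuitPolyPrincipalIdeal} transfers verbatim. The remainder of the proof is a clean case split combined with two standard properties of the Sylvester resultant already cited earlier in the paper.
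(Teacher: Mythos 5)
Your proof is correct, and its core coincides with the paper's: the paper's entire argument is the one-line observation that distinct circuit polynomials are irreducible over $\q$, hence share no common factor, so $\res{p_A}{p_B}{x_e}\neq 0$ by Proposition~\ref{prop:resultantCommonFactor} --- exactly your main-case nonvanishing step, which you merely spell out (associates would force equal supports). Where you go beyond the paper is in two places. First, you treat the degenerate possibility $x_e\in\supp{p_A}\setminus\supp{p_B}$ (or symmetrically), which the statement's hypothesis $x_e\in\supp{p_A}\cup\supp{p_B}$ formally allows and the paper silently ignores (in all later uses one has $e\in A\cap B$); your computation $\res{p_A}{p_B}{x_e}=p_B^{\,r}$ disposes of it cleanly. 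Second, you actually prove \emph{non-constancy}, which the paper's proof does not address: your route via Propositions~\ref{prop:circHomogeneous} and~\ref{prop:resultantHomogeneous}, giving homogeneous degree $ms+nr-rs\geq rs\geq 1$, is sound (and correctly notes that the homogeneity formula presupposes a nonzero resultant). A slightly shorter alternative for that last point: by Lemma~\ref{lem:resultantSupport} the resultant lies in the prime ideal $\cm{m}$, and a nonzero constant in a proper ideal is impossible; but your homogeneity argument is equally valid and has the advantage of not needing the elimination-ideal machinery. No gaps.
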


\begin{proof} Circuit polynomials are irreducible over $\mathbb Q$, hence the resultant of two distinct circuit polynomials is non-zero (by Proposition
\ref{prop:resultantCommonFactor}
in the Appendix).
\end{proof}

Let $A$ and $B$ be distinct sparsity circuits and let $p_{A}$ and $p_{B}$ be the corresponding circuit polynomials. Assuming $e\in A\cap B$, the polynomials $p_{A}$ and $p_{B}$ belong to the ring $R[x_e]$, where
$R=\mathbb Q[(\supp{p_{A}}\cup\supp{p_{B}})\setminus\{x_e\}].$

\begin{theorem}\label{lem:structureCircuitPoly}Let $C$ be a sparsity circuit on $n+1$ vertices and $p_C$ its corresponding circuit polynomial. There exist sparsity circuits $A$ and $B$ on at most $n$ vertices with circuit polynomials $p_{A}$ and $p_{B}$ such that $p_{C}$ is an irreducible factor over $\mathbb Q$ of $\res{p_{A}}{p_{B}}{x_e}$, where $e\in A\cap B$.
\end{theorem}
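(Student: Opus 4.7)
The plan is to combine the combinatorial decomposition of Theorem~\ref{thm:combResConstruction} with the principal-ideal structure of elimination ideals at circuits from Theorem~\ref{app:thm:circuitPolyPrincipalIdeal}. First I would invoke Theorem~\ref{thm:combResConstruction} (concretely, Proposition~\ref{prop:circuits3conn} for the 3-connected case and the 2-split machinery of Lemma~\ref{lem:twoSum} together with the Tutte decomposition for the 2-connected case) to produce sparsity circuits $A$ and $B$, each on at most $n$ vertices, and a common edge $e\in A\cap B$ with $C=\cres{A}{B}{e}$. I would then form the circuit polynomials $p_A$ and $p_B$, noting that because a circuit polynomial has support equal to the full edge-variable set of its underlying circuit (as identified in Section~\ref{sec:prelimCMideal}), both $p_A$ and $p_B$ have positive degree in $x_e$, so the Sylvester resultant $\res{p_A}{p_B}{x_e}$ is a well-defined, nonconstant polynomial in the remaining variables.

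The algebraic heart of the argument is then a short chain of three facts. By Lemma~\ref{lem:nonZeroCircuitPoly}, the distinct irreducible circuit polynomials $p_A$ and $p_B$ share no common factor, so $\res{p_A}{p_B}{x_e}$ is nonzero. By Lemma~\ref{lem:resultantSupport}, its support is contained in the edge-variable set of the combinatorial resultant $\cres{A}{B}{e}=C$, hence $\res{p_A}{p_B}{x_e}$ lies in the elimination ideal $\cm n\cap \q[C]$. By Theorem~\ref{app:thm:circuitPolyPrincipalIdeal}, that elimination ideal is principal and generated by the irreducible circuit polynomial $p_C$. Since $\q[C]$ is a unique factorization domain and $p_C$ is irreducible, $p_C$ must appear as an irreducible factor of the nonzero polynomial $\res{p_A}{p_B}{x_e}$, which is exactly what needs to be shown.

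The main obstacle I anticipate is bookkeeping: verifying that the edge $e$ produced by the combinatorial decomposition is indeed a common indeterminate of $p_A$ and $p_B$ with positive degree in each, so that the Sylvester resultant is both defined and nonzero. This is the one place where the combinatorial picture (edges of $A\cap B$) and the algebraic picture (indeterminates in $\supp p_A\cap \supp p_B$) must mesh precisely; it follows from the identification of a rigidity circuit with the full support of its circuit polynomial, together with the symmetric property of the resultant in Proposition~\ref{prop:basicPropResultants} in case one wishes to swap the roles of $A$ and $B$. Note that the statement only claims that $p_C$ is one of the irreducible factors of $\res{p_A}{p_B}{x_e}$: the resultant may in principle carry extraneous factors, and it is precisely this possibility that motivates the factorization and ideal-membership step in the algorithmic counterpart to the theorem, but it has no bearing on the existence claim being proved here.
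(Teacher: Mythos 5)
Your proposal is correct and follows essentially the same route as the paper's own proof: both invoke the combinatorial decomposition $C=\cres{A}{B}{e}$ from Theorem~\ref{thm:combResConstruction}, use Lemma~\ref{lem:nonZeroCircuitPoly} and Lemma~\ref{lem:resultantSupport} to place the nonzero resultant $\res{p_A}{p_B}{x_e}$ in the elimination ideal supported on $C$, and conclude via the fact that this ideal is principal and generated by the irreducible $p_C$ (Theorem~\ref{app:thm:circuitPolyPrincipalIdeal}). Your extra bookkeeping about $x_e$ being a genuine common indeterminate of positive degree is a reasonable refinement but does not change the argument.
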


\begin{proof}Given a sparsity circuit $C$ on $n+1$ vertices we can find two sparsity circuits $A$ and $B$ on at most $n$ vertices such that $C=\cres{A}{B}{e}$ for some $e\in A\cap B$ by the proof of Theorem \ref{thm:combResConstruction} in Section~\ref{sec:combRes}. Let $p_{A}$ and $p_{B}$ be the corresponding circuit polynomials.

The polynomials $p_{A}$ and $p_{B}$ are contained in $\cm{m}$ for some $m\geq n+1$ and the resultant $\res{p_{A}}{p_{B}}{x_e}$ is a non-constant polynomial in $R=\mathbb Q[(\supp{p_{A}}\cup\supp{p_{B}})\setminus\{x_e\}]$ supported on $\supp{p_C}$. Since $\ideal{p_{A},{p_{B}}}\subset\cm{m}$, we have that $\res{p_{A}}{p_{B}}{x_e}$ is contained in the elimination ideal $\cm{m}\cap\mathbb{Q}[\supp{p_C}]=\ideal{p_C}$ (by Lemma \ref{lem:resultantSupport}).
\end{proof}

\begin{corollary}Under the assumptions of Theorem \ref{lem:structureCircuitPoly}, the resultant $\res{p_{A}}{p_{B}}{x_e}$ is a circuit polynomial if and only if it is irreducible (over $\mathbb Q$).\end{corollary}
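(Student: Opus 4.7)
The plan is to prove both directions of the biconditional using only Theorem~\ref{lem:structureCircuitPoly} and the fact (recalled in Section~\ref{sec:prelimCMideal}, following Lovász--Dress) that every circuit polynomial is irreducible over $\mathbb Q$ and unique up to a rational scalar.

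For the forward direction, I would simply note that if $\res{p_A}{p_B}{x_e}$ is a circuit polynomial, then by the Lovász--Dress irreducibility of circuit polynomials it is, in particular, irreducible over $\mathbb Q$. No further argument is required.

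For the converse, the key input is Theorem~\ref{lem:structureCircuitPoly}, which asserts that $p_C$ appears as an irreducible factor of $\res{p_A}{p_B}{x_e}$ over $\mathbb Q$. By Lemma~\ref{lem:nonZeroCircuitPoly} the resultant is a non-constant polynomial in $R$, so the factorization is nontrivial. Assuming the resultant is irreducible, the factorization can contain only one non-unit factor (up to associates), and that factor must be $p_C$. Hence $\res{p_A}{p_B}{x_e} = c \cdot p_C$ for some nonzero rational constant $c$, which exhibits the resultant as (a scalar multiple of) the circuit polynomial $p_C$ supported on $C = \cres{A}{B}{e}$.

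The main ``obstacle'' here is really just bookkeeping: making sure the support of $\res{p_A}{p_B}{x_e}$ is indeed $\supp p_C$ (which is already packaged into Theorem~\ref{lem:structureCircuitPoly} via Lemma~\ref{lem:resultantSupport}) and that scalar multiples of a circuit polynomial are still considered circuit polynomials under the convention ``unique up to multiplication by a constant'' adopted in Section~\ref{sec:prelimAlgMatroids}. Both forward and reverse directions are then one-line consequences of irreducibility of circuit polynomials plus Theorem~\ref{lem:structureCircuitPoly}, so the corollary should admit a very short proof.
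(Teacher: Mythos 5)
Your proof is correct and is exactly the argument the paper leaves implicit: the forward direction is the Lov\'asz--Dress irreducibility of circuit polynomials, and the converse follows because Theorem~\ref{lem:structureCircuitPoly} makes $p_C$ an irreducible factor of the non-constant resultant, so irreducibility forces $\res{p_{A}}{p_{B}}{x_e} = c\cdot p_C$ with $c\in\mathbb Q\setminus\{0\}$. Nothing is missing; this is the same one-line consequence the paper intends.
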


Indeed, it is not always the case that the resultant of two circuit polynomials, such that its support corresponds to a sparsity circuit is irreducible. Recall from Corollary \ref{cor:nonunique} that in general a sparsity circuit $C$ can be represented as the combinatorial resultant of two circuits in more than one way. If $C=\cres{C_1}{C_2}{e}=\cres{C_3}{C_4}{f}$ and $p_{C_i}$ for $i\in\{1,\dots,4\}$ are the corresponding circuit polynomials, then $\res{p_{C_1}}{p_{C_2}}{x_e}$ and $\res{p_{C_3}}{p_{C_4}}{x_f}$ will in general be distinct elements of $\ideal{p_C}$. Moreover, the resultant with the higher homogeneous degree must necessarily have a non-trivial factor.

\begin{example}
	\label{example:resultantNotIrreducible}
	Let $C$ be the 2-connected circuit $\{12,13,23,24,34,15,16,56,45,46\}$ as shown in Figure \ref{fig:exampleCircuit}. This circuit can be obtained as the combinatorial resultant of the circuits $C_1$ and $C_2$ given by the $K_4$ graph on $1234$ and $1456$, respectively, with the edge $14$ eliminated (Figure \ref{fig:exampleCircuit}, left). It can also be obtained as the combinatorial resultant  of the circuits $C_3$ and $C_4$ given by the wheels on $1245$ with $3$ in the center and $1346$ with $5$ in the center, with the edge $35$ eliminated (Figure \ref{fig:exampleCircuit}, right). However, $\res{p_{C_1}}{p_{C_2}}{x_{1,4}}$ is the resultant of two quadratic polynomials of homogeneous degree 3, whereas $\res{p_{C_3}}{p_{C_4}}{x_{3,5}}$ is the resultant of two quartic polynomials of homogeneous degree 8. Therefore, by Proposition \ref{prop:resultantHomogeneous}, $\res{p_{C_1}}{p_{C_2}}{x_{1,4}}$ is homogeneous of degree 8, whereas $\res{p_{C_3}}{p_{C_4}}{x_{3,5}}$ is of homogeneous degree 48. Both resultants have the same circuit as its supporting set, hence they are both in the elimination ideal $\ideal{p_C}$ of $\cm{6}$ which is generated by $\res{p_{C_1}}{p_{C_2}}{x_{1,4}}$. Therefore $\res{p_{C_1}}{p_{C_2}}{x_{1,4}}=\alpha\cdot p_C$ and $\res{p_{C_3}}{p_{C_4}}{x_{3,5}}=q\cdot p_C$ where $\alpha\in\mathbb Q$ and $q$ is a non-constant polynomial in $\mathbb Q[C]$. \end{example}

We can generalize Example \ref{example:resultantNotIrreducible} in the following way. Let $C$ be a sparsity circuit on $n\geq 5$ vertices. Consider the set $\Gamma_C$ of all possible representations of $C$ as a combinatorial resultant of two sparsity circuits $A$ and $B$ on at most $n$ vertices 
\[\Gamma_C=\{(A,B,e)\mid C=\cres{A}{B}{e}, |V(A)|,|V(B)|\leq |V(C)|\}\]
and the set
\[\rho_C=\{\res{p_{A}}{p_{B}}{x_e}\mid (A,B,e)\in\Gamma_C\}\]
of all resultants of corresponding circuit polynomials. The circuit polynomial $p_C$ of the circuit $C$  of Example \ref{example:resultantNotIrreducible} had the property of being the polynomial in $\rho_C$ of minimal homogeneous degree. One might therefore conjecture that for any sparsity circuit $C$, the polynomial in $\rho_C$ of minimal homogeneous degree is the circuit polynomial for $C$; in that case no irreducibility check would be required as we can compute the homogeneous degree of $\res{p_{A}}{p_{B}}{x_e}$ from the homogeneous degrees and the degrees in $x_e$ of $p_{A}$ and $p_{B}$ (Proposition \ref{prop:resultantHomogeneous}). However, we will show in Proposition \ref{prop:k33notObtainable} that in general the circuit polynomial of a circuit $C$ is not necessarily in $\rho_C$. This fact leads to the following natural question.

\begin{problem}Let $C$, $A$ and $B$ be sparsity circuits such that $C=\cres{A}{B}{e}$ with $p_C$, $p_{A}$ and $p_{B}$ the corresponding circuit polynomials. Under which conditions on $A$, $B$, $p_{A}$ and $p_{B}$ is $\res{p_{A}}{p_{B}}{x_e}=\alpha\cdot p_C$ for some $\alpha\in\mathbb Q$?
\end{problem}

\subparagraph{Algorithms: determining the circuit polynomial from the resultant.}
If $\res{p_{A}}{p_{B}}{x_e}$ is not irreducible, then (up to multiplicity) exactly one of its irreducible factors (over $\mathbb Q$) is in $\cm n$, and that factor is precisely the circuit polynomial $p_C$. This factor can be deduced in two steps: an analysis of the supports of the factors and an ideal membership test. 

\textbf{Step 1: analysing the supports of the irreducible factors.} If $C=\cres{A}{B}{e}$ then we know that $C$ and $\supp p_C$ are identified. Recall that the elimination ideal $\ideal{p_C}$ is an ideal of $\mathbb \q[C]$, and since $\res{p_{A}}{p_{B}}{x_e}\in\ideal{p_C}$, any irreducible factor (over $\mathbb Q$) of this resultant is supported on a subset of $\supp p_C$ that is not necessarily proper. At least one these factors must be supported on exactly $\supp p_C$, and if there is only one such factor, then that factor must be $p_C$. Otherwise, we proceed to Step 2.

\textbf{Step 2: ideal membership test.} Take into consideration only those irreducible factors of $\res{p_{A}}{p_{B}}{x_e}$ that are supported on $\supp{p_C}$. Test each factor for membership in $\cm n$ via a \grobner{} basis algorithm with respect to some monomial order, not necessarily an elimination order. The first factor determined to be in $\cm n$ is $p_C$.

\begin{algorithm}[ht]
	\caption{Circuit polynomial: resultant step}
	\label{alg:resultant}
	\textbf{Input}: Circuits $A$, $B$ and edge $e$ such that $C=\cres{A}{B}{e}$. Circuit polynomials $p_A$ and $p_B$ and elimination variable $x_e$.\\
	\textbf{Output}: Circuit polynomial $p_C$ for $C$.
	\begin{algorithmic}[1]
		\State Compute the resultant $p$ = Res($p_A$,$p_B$,$x_e$).
		\If {$p$ is irreducible} 
		\State $p_C = p$
		\Return $p_C$
		\Else
		\State factors = factorize $p$ over $\q$
		\State factors = discard factors with support not equal to $C$
		\If {exactly one remaining factor (possibly with multiplicity)}
		\State$p_C$ = the unique factor supported on $C$
		\Else 
		\State apply a test of membership in the $CM$ ideal on the remaining factors
		\State $p_C$ = unique factor for which ideal membership test succeeded
		\State \Return $p_C$		
		\EndIf
		\EndIf		
	\end{algorithmic}
\end{algorithm}

Every circuit polynomial can inductively be computed from the circuit polynomials supported on complete graphs on $4$ vertices by repeatedly calculating resultants and applying steps 1 and 2. Hence we can preform all the computations in a prime ideal that is minimal over an ideal generated only by circuit polynomials of complete graphs on 4 vertices. In order to define this ideal we first define the following tree.

Let $C$ be a sparsity circuit on $n$ vertices and $T_C$ a resultant tree for $C$. Denote by $\supp T_C$ the union of the supports of all leaves in $T_C$:
\[\supp T_C=\{x_{i,j}\mid \{i,j\}\text{ is an edge of a leaf in }T_C\}.\]Consider in $S_{T_C}=\mathbb{C}[\supp T_C]$ the ideal
\[
K_{T_C} =\langle\{p_{K}\text{ is a circuit polynomial in}\cm n\mid
 K\text{ is a leaf of }T_C\}\rangle
\]
generated by all circuit polynomials supported on the complete graphs on $4$ vertices that appear in $T_C$. This ideal need not be prime (see Proposition \ref{prop:k33notObtainable}), however it always has at least one minimal prime ideal $P_{T_C}$ above it.

\begin{theorem}
	\label{thm:minPrime}
	Let $C$ be a sparsity circuit on $n$ vertices, $T_C$ a resultant tree for $C$ and $p_C\in\cm n$ its circuit polynomial. If $P_{T_C}$ is a minimal prime ideal over $K_{T_C}$, then $p_C\in P_{T_C}$. 
\end{theorem}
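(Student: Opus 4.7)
The plan is to prove the stronger statement that \emph{for every node $N$ of the resultant tree $T_C$, the circuit polynomial $p_N$ lies in $P_{T_C}$}, by structural induction on $T_C$ working from the leaves toward the root $C$. The conclusion then follows immediately for $N=C$. This matches the way $T_C$ is built: the combinatorial resultant operations at each internal node are precisely what lets us propagate ideal-membership information upward through the tree.

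For the base case, if $N$ is a leaf of $T_C$ then $N$ is a $K_4$ and $p_N$ is among the generators of $K_{T_C}$ by definition, so $p_N \in K_{T_C} \subseteq P_{T_C}$. For the inductive step, suppose $N=\cres{N_1}{N_2}{e_N}$ with $p_{N_1},p_{N_2}\in P_{T_C}$ by the inductive hypothesis. By Proposition~\ref{prop:resultantElimination} applied in the subring generated by $\supp{p_{N_1}}\cup \supp{p_{N_2}}$, the resultant $R_N=\res{p_{N_1}}{p_{N_2}}{x_{e_N}}$ lies in the ideal $\ideal{p_{N_1},p_{N_2}}$ and hence in $P_{T_C}$. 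By Theorem~\ref{lem:structureCircuitPoly}, $p_N$ appears as an irreducible factor of $R_N$ over $\q$, so we can write $R_N=p_N\cdot q$ for some polynomial $q$. Since $P_{T_C}$ is prime and $p_N\cdot q\in P_{T_C}$, at least one of $p_N$ or $q$ lies in $P_{T_C}$.

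The main obstacle is precisely this last step: ruling out the possibility that $q\in P_{T_C}$ while $p_N\notin P_{T_C}$. The resolution I would pursue uses two observations. First, by Theorem~\ref{app:thm:circuitPolyPrincipalIdeal}, $p_N$ is the (unique up to constant) generator of the principal elimination ideal $\cm m\cap \q[\supp{p_N}]$, so $p_N$ is singled out among the irreducible factors of $R_N$ as the one which itself lies in the prime Cayley-Menger ideal. Second, the prime $\cm n\cap S_{T_C}$ contains $K_{T_C}$ and therefore contains some minimal prime over $K_{T_C}$; exploiting the minimality of $P_{T_C}$ together with this inclusion (in either direction) should force $P_{T_C}$ to be compatible with the Cayley-Menger structure at each internal node and hence to pick up $p_N$ rather than its cofactor $q$.

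If the cleanest version of this compatibility argument does not go through for every minimal prime simultaneously, a fallback is to pass to the radical: show inductively that $p_N^{k_N}\in K_{T_N}$ for some $k_N\geq 1$, so that $p_C\in \sqrt{K_{T_C}}=\bigcap P$, the intersection running over all minimal primes of $K_{T_C}$. This would give $p_C\in P_{T_C}$ regardless of which minimal prime is chosen, and reduces the problem to a purely ideal-theoretic statement about how the resultant factorization $R_N=p_N\cdot q$ interacts with $K_{T_N}=K_{T_{N_1}}+K_{T_{N_2}}$; identifying the precise hypothesis on the construction that makes $p_N$ (and not merely $R_N$) radical-member of $K_{T_N}$ is the technical heart of the argument.
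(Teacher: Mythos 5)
Your induction skeleton is exactly the paper's: leaves contribute the generators of $K_{T_C}$, at each internal node the resultant of the children's circuit polynomials lies in $P_{T_C}$ (via Proposition~\ref{prop:resultantElimination}) and has $p_N$ among its irreducible factors, and primality of $P_{T_C}$ forces some irreducible factor into $P_{T_C}$. But the proof stops exactly where the real work is: you explicitly flag the exclusion of the cofactor $q$ as ``the main obstacle'' and offer only two speculative strategies, neither carried out, so as written this is a genuine gap rather than a proof. Your first strategy is the right direction but does not work as stated: the observation that the prime $\cm n\cap S_{T_C}$ contains $K_{T_C}$ only guarantees that it contains \emph{some} minimal prime over $K_{T_C}$, and this says nothing about the arbitrary minimal prime $P_{T_C}$ fixed in the statement. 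Your fallback (showing $p_N^{k_N}\in K_{T_N}$, i.e.\ membership in the radical) is a strictly stronger claim that you do not establish and that the paper neither proves nor needs.

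The paper closes the step with a single containment invoked at each node $C_i$: $P_{T_C}\cap\q[C_i]\subseteq \cm n\cap \q[C_i]=\ideal{p_{C_i}}$, the last equality being Theorem~\ref{app:thm:circuitPolyPrincipalIdeal}. Granting this, every irreducible factor of $r_i$ lies in $\q[C_i]$, so any factor belonging to $P_{T_C}$ lies in the principal ideal generated by the irreducible polynomial $p_{C_i}$ and must therefore equal $p_{C_i}$ up to a nonzero constant; since primality guarantees that at least one irreducible factor of $r_i\in P_{T_C}$ belongs to $P_{T_C}$, that factor is $p_{C_i}$, and the induction advances to the root. This uniqueness mechanism --- the elimination ideal $\cm n\cap\q[C_i]$ is principal with irreducible generator $p_{C_i}$ and dominates $P_{T_C}\cap\q[C_i]$ --- is precisely what is missing from your write-up. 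Be aware, though, that the inclusion $P_{T_C}\cap\q[C_i]\subseteq\cm n\cap\q[C_i]$ is asserted in the paper without justification, and your own ``some minimal prime'' remark shows it is not automatic for an arbitrary minimal prime; a fully airtight version of the argument should either justify that inclusion or restrict attention to minimal primes $P_{T_C}$ contained in $\cm n\cap S_{T_C}$, for which the paper's chain of deductions goes through verbatim.
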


\begin{proof}By assumption the circuit polynomials of the leaves of $T_C$ are in $P_{T_C}$. By computing the resultants along the tree $T_C$, at each node $C_i$ we obtain a polynomial $r_i\in P_{T_C}$ that has the circuit polynomial $p_{C_i}$ of $C_i$ as a factor. Since $P_{T_C}$ is a prime ideal and $P_{T_C}\cap\mathbb Q[C_i]\subseteq \cm n\cap \mathbb Q[C_i]=\ideal{p_{C_i}}$ it follows that the only irreducible factor of $r_i$ that can be in $P_{T_C}$ is $p_{C_i}$. Therefore $p_C\in P_{T_C}$.\end{proof}

Note that we can either perform a factorization and a membership test in the ideal $P_{T_C}$ at each node, or we can can preform these two operations only once at the root of $T_C$ because of the multiplicativity of resultants (Proposition \ref{prop:basicPropResultants} (ii)). In both cases we have to compute a minimal prime ideal over $K_{T_C}$ which requires a \grobner{} basis computation (see \cite{DeckerGreuelPfister} for a survey of methods for computing prime ideals minimal over a given ideal), although both options have their advantages and drawbacks:
\begin{itemize}
	\item by factorizing the resultant $r_i$ and applying an ideal membership test to its factors at each node $C_i$, we can discard unnecessary factors and carry to the parent level only the circuit polynomial $p_{C_i}$, reducing the complexity of the subsequent computation of the resultant in the parent node of $C_i$. The drawback is that we have to preform a factorization at every non-leaf node, and a membership tests at every node at which the resultant has distinct factors.
	\item By choosing to forgo factorization and an ideal membership test at each node except the root, we have to perform only one factorization and only one ideal membership test at the root of $T_C$ for the resultant $\res{A}{B}{x_e}$ with respect to some common indeterminate $x_e$, where $A$ and $B$ themselves are resultants of resultants obtained in the previous levels. However, the drawback is that not just the factorization, but even the computation of $\res{A}{B}{x_e}$ itself becomes very difficult.
\end{itemize}

\subparagraph{Comments on complexity.} The main bottleneck in our approach for computing circuit polynomials is that we still have to compute a \grobner{} basis with respect to some monomial order in order to apply an ideal membership test. It is difficult to know a priori which monomial order will be the most efficient. However in practice elimination orders, necessary for the general method for computing elimination ideals, behave badly (see \cite[Section 4]{BayerMumford} and the Section \emph{Complexity Issues} in \cite[\S10 of Chapter 2]{CoxLittleOshea}) while graded orders show better performance but cannot be used to compute elimination ideals.

Our approach however avoids the use of an elimination order, requires only one elimination step that is obtained with resultants, and is followed by a factorization with a potential ideal membership test that can be preformed with respect to \emph{any} monomial order. Hence we are free to choose a monomial order for $\cm n$ that we expect to have better performance than an elimination order.

A measure of complexity of a \grobner{} basis computation is given by bounds on the total degrees\footnote{The total degree of a polynomial is the maximum of the homogeneous degrees of its monomials} of polynomials in the basis, with an exponential lower bound and a doubly exponential upper bound in the number of indeterminates of the underlying ring (see the recent survey \cite{MayrToman}). In particular, Dub\'e \cite{Dube} gives the following bound.

\begin{theorem}(\cite{Dube}) Let $I=\ideal{f_1,\dots,f_s}$ be an ideal of $R[x_1,\dots,x_n]$, where $R$ is a ring and $s,n\geq 1$. If $d$ is the maximum of the total degrees of polynomials in $\{f_1,\dots,f_s\}$, then every reduced \grobner{} basis $B$ of $I$ with respect to an monomial order consists of polynomials $g\in B$ satisfying
\[\deg(g)\leq 2\left(\frac{d^2}{2}+d\right)^{2^{n-1}}.\]\end{theorem}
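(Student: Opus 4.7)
The plan is to follow Dubé's original combinatorial strategy, whose core tool is the \emph{cone decomposition} of monomial ideals. First I would reduce to the homogeneous case: by homogenizing $f_1,\dots,f_s$ with a new variable $x_0$ (which only affects degrees by a constant factor and adds one variable), it suffices to bound the maximal degree in a reduced \grobner{} basis of a homogeneous ideal. Next, recall that for any monomial order, the reduced \grobner{} basis $B$ of $I$ is in bijection with the minimal monomial generators of the initial ideal $\operatorname{in}(I)$, and corresponding elements share the same total degree. So the problem reduces to bounding the maximal degree of a minimal generator of $\operatorname{in}(I)$ in terms of the generator degree $d$ of $I$ and the number of variables $n$.

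The second step would exploit the equality of Hilbert functions $\operatorname{HF}(I,\cdot)=\operatorname{HF}(\operatorname{in}(I),\cdot)$. I would then introduce \emph{cone decompositions}: a presentation of $\operatorname{in}(I)$ and of its complementary set of standard monomials as a disjoint union of cones $x^{a}\cdot k[U]$ for various monomials $x^{a}$ and variable subsets $U\subseteq\{x_1,\dots,x_n\}$. The Hilbert series of each cone is easy to read off, and equating the two Hilbert series provides a rigid constraint linking the generator degrees of $\operatorname{in}(I)$ to the exponents appearing in the cones.

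The heart of the proof is an induction on $n$ that controls simultaneously two quantities: a bound $D(n,d)$ on the degrees of the cone-initial monomials, and a bound on the number of cones needed. At the inductive step, one splits every cone according to whether it contains a fixed variable $x_n$ or not, reducing the dimension of the ambient ring in one of the two branches. The key recursion takes roughly the shape $D(n,d)\leq D(n-1,d)^{2}+\text{linear term in }d$, which upon unfolding yields precisely the double-exponential expression $2\bigl(d^{2}/2+d\bigr)^{2^{n-1}}$. Crucially, since the degrees of generators of $\operatorname{in}(I)$ are themselves bounded by the cone-exponent bound $D(n,d)$, this delivers the desired estimate on $\deg(g)$ for every $g\in B$.

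The main obstacle, and where all the real work lies, is establishing the recursion for $D(n,d)$. This requires a delicate combinatorial bookkeeping: one must show that whenever a cone decomposition fails to witness the correct Hilbert function, a new generator of bounded degree can be extracted and the decomposition refined, with the refinement process terminating in a controlled number of steps. Tracking the worst-case accumulation of degrees through this refinement is precisely what forces the squaring behaviour at each level of the induction, and hence the double-exponential shape of the final bound. Everything else (homogenization, passage to the initial ideal, Hilbert function preservation) is standard; the cone-decomposition degree bookkeeping is where I would expect to spend the overwhelming majority of the proof.
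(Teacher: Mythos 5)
This statement is not proved in the paper at all: it is quoted verbatim from Dub\'e \cite{Dube} as a known complexity bound used only to discuss the cost of the \grobner{} basis computations in Section 7, so there is no proof of the authors' own to compare yours against. Measured against the actual source, your outline does follow Dub\'e's strategy faithfully: pass to the initial ideal (reduced \grobner{} basis elements correspond to minimal generators of $\operatorname{in}(I)$), use the equality of Hilbert functions of $I$ and $\operatorname{in}(I)$, decompose both the initial ideal and the set of standard monomials into cones $x^{a}k[U]$, and run an induction on the number of variables in which the degree bound squares at each level, producing the shape $2\left(d^{2}/2+d\right)^{2^{n-1}}$.

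That said, as a proof the proposal is a plan rather than an argument. The entire quantitative content of the theorem lives in the step you explicitly defer: constructing an \emph{exact} cone decomposition compatible with the Hilbert function, showing the refinement process terminates, and extracting from it the precise recursion (Dub\'e's Macaulay-constant bookkeeping), which is what pins down the constant $d^{2}/2+d$ and the exponent $2^{n-1}$ rather than merely ``some'' doubly exponential bound. Writing ``the key recursion takes roughly the shape $D(n,d)\leq D(n-1,d)^{2}+\text{linear term in }d$'' asserts the answer instead of deriving it. A second point needing care is the homogenization step: adjoining $x_{0}$ raises the number of variables to $n+1$, so a naive application of the inductive bound would yield an exponent $2^{n}$, weaker than the claimed $2^{n-1}$; Dub\'e's treatment avoids this loss, and your sketch should either reproduce that bookkeeping or explain why the extra variable does not cost a level of the induction. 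As it stands, the proposal identifies the right road map but leaves unproved exactly the parts that constitute the theorem.
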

With respect to $\cm n$ for $n\geq 5$ the standard generators are of homogeneous degree 3, 4 and 5, therefore this bound is $2(25/2+5)^{2^{\binom n2 -1}}$.

For large $n$ there will be circuits $C$ on $n$ vertices that have a resultant tree $T_C$ such that $|\supp T_C|$ is relatively small compared to $\binom n2$. In that case it can be more feasible to preform \grobner{} basis computations in the lower-dimensional ring $\mathbb Q[\supp T_C]$ than to compute a \grobner{} basis for $\cm n$ in the ring on $\binom n2$ indeterminates; first we have to compute a \grobner{} basis for $K_{T_C}$ which has Dub\'e's bound reduced to  $2(9/2+3)^{2^{|\supp T_C| -1}}$, and use that basis to compute a \grobner{} basis for $P_{T_C}$.

Mayr and Ritscher \cite{MayrRitscher} have shown that the total degrees of the polynomials in the \grobner{} basis $B$ are bounded  doubly-exponentially in the dimension of $I$.

\begin{theorem}(\cite{MayrRitscher}) Let $I=\ideal{f_1,\dots,f_s}$ be an ideal of $k[x_1,\dots,x_n]$, where $k$ is an infinite field and $s,n\geq 1$. If $d$ is the maximum of the total degrees of polynomials in $\{f_1,\dots,f_s\}$, then every reduced \grobner{} basis $B$ of $I$ with respect to an monomial order consists of polynomials $g\in B$ satisfying
\[\deg(g)\leq 2\left(\frac 12\left(d^{2(\codim I)^2}+d\right)\right)^{2^{\dim I}}.\]
\end{theorem}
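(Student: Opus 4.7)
The plan is to reduce the claimed bound to Dubé's theorem after compressing the ``effective'' number of variables from $n$ down to $\codim I$ via a Noether normalization. The guiding intuition is that the hardness of the \grobner{} basis computation is governed not by the ambient dimension $n$ but by the codimension of the variety cut out by $I$, with the dimension $\dim I$ entering only through the outer iteration that handles the remaining ``free'' parameter variables.

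First I would apply a generic linear change of coordinates $\varphi$ on $k[x_1,\dots,x_n]$---available because $k$ is infinite---so that the projection onto the last $r=\dim I$ coordinates is a Noether normalization of $k[x_1,\dots,x_n]/\varphi(I)$; equivalently, $k[x_1,\dots,x_n]/\varphi(I)$ is a finite integral extension of the subring generated by $x_{n-r+1},\dots,x_n$. Standard results on the behavior of Hilbert functions under generic changes of coordinates imply that such a $\varphi$ preserves the total degrees of a reduced \grobner{} basis up to controlled constants, so it suffices to prove the bound for $\varphi(I)$.

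Next I would pass to the ring $K[x_1,\dots,x_{n-r}]$ where $K=k(x_{n-r+1},\dots,x_n)$. The extended ideal $\varphi(I)\cdot K[x_1,\dots,x_{n-r}]$ is zero-dimensional in a polynomial ring on only $\codim I$ indeterminates. A careful refinement of Dubé's cone-decomposition argument in this zero-dimensional setting---where the combinatorics of leading terms collapses substantially---should produce a degree bound of the form $d^{O((\codim I)^2)}$, which is exactly what appears inside the parentheses in the statement as $d^{2(\codim I)^2}$.

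The technical heart, and what I expect to be the main obstacle, is lifting a \grobner{} basis over $K$ back to one over $k[x_1,\dots,x_n]$. The denominators from $k[x_{n-r+1},\dots,x_n]$ must be cleared and the resulting polynomials ``saturated'' so that the set is a \grobner{} basis over the original ring rather than merely after localization at the multiplicative set $k[x_{n-r+1},\dots,x_n]\setminus\{0\}$. Each of the $\dim I$ parameter variables can, in the worst case, double the degree bound during this clearing-and-saturating step, producing the outer exponent $2^{\dim I}$. The delicate part is to establish exactly this doubling and no more: one needs a careful analysis of cone decompositions over the normalization and of the flat families they produce, so that the combination of the zero-dimensional inner bound with the parameter-clearing iteration yields precisely $2\bigl(\frac12(d^{2(\codim I)^2}+d)\bigr)^{2^{\dim I}}$, as stated, rather than a strictly weaker bound.
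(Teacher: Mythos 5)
The paper does not prove this statement at all: it is quoted as a black-box complexity bound with a citation to Mayr and Ritscher, so there is no internal proof to compare against, and what you are really attempting is a reconstruction of the Mayr--Ritscher argument. Your reconstruction has genuine gaps. First, the transfer step is unjustified and, as stated, false: degrees in a reduced \grobner{} basis are \emph{not} preserved (even ``up to controlled constants'') under a generic linear change of coordinates --- they depend strongly on the coordinates and the monomial order, which is precisely why generic initial ideals behave so differently from arbitrary ones. What is invariant under a linear change of variables is the Hilbert function of $I$, and the actual route (Dub\'e, and Mayr--Ritscher refining him) bounds \grobner{} basis degrees for \emph{every} order in terms of coordinate-invariant data (exact cone decompositions and the associated Macaulay constants controlled by the Hilbert function); generic coordinates are used only to estimate that invariant data, e.g.\ via Noether position. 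Proving the bound for $\varphi(I)$ and then asserting it for $I$ does not work without this intermediary.

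Second, the ``technical heart'' you flag is indeed where the proposal fails, and no mechanism is supplied. Extending $\varphi(I)$ to $K[x_1,\dots,x_{n-r}]$ with $K=k(x_{n-r+1},\dots,x_n)$ and then clearing denominators does not recover $\varphi(I)$: the contraction of the extended ideal is the saturation of $\varphi(I)$ with respect to the multiplicative set $k[x_{n-r+1},\dots,x_n]\setminus\{0\}$, which can strictly contain $\varphi(I)$, and a \grobner{} basis over $K$ with denominators cleared is in general not a \grobner{} basis over the original ring. The claim that each parameter variable ``doubles the degree bound,'' yielding the outer exponent $2^{\dim I}$, is a heuristic fitted to the shape of the answer rather than an argument; in Mayr--Ritscher the doubly exponential dependence on $\dim I$ comes out of the recursive estimates on cone decompositions, not from a per-variable lifting/saturation step. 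Likewise the inner bound $d^{O((\operatorname{codim} I)^2)}$ for the zero-dimensional reduction is only asserted (``a careful refinement \dots should produce''). Since both the transfer step and the lifting step are unsupported, the proposal does not constitute a proof; if you want to present this theorem, cite it as the paper does, or else follow the cone-decomposition/Macaulay-constant line of argument rather than localization and lifting.
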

Thus a strategy of computing a circuit polynomial of a circuit $C$ on $n$ vertices is to find the ideal $P_{T_C}$ of least possible dimension (which cannot be smaller than $2n-3$). If the dimension of $P_{T_C}$ is the best possible, i.e.\ equal to $2n-3$, then its codimension is also smaller than the codimension of $\cm n$ because the underlying ring of $P_{T_C}$ has less than $\binom n2$ indeterminates. Therefore, when $\dim P_{T_C}=2n-3$ the Mayr-Ritscher bound for $P_{T_C}$ will be smaller than the bound for $\cm n$ even though we also have $\dim{\cm n}=2n-3$. However, determining the dimension of an ideal in general requires polynomial space \cite{BernasconiMayrMnukRaab}.

\section{Experiments}
\label{sec:experiments}

\subparagraph{Computation of circuit polynomials via \grobner{} bases.} In principle a circuit polynomial $p\in\cm n$ can be computed by computing a \grobner{} basis
$\mathcal G_{\cm n}$ for $\cm n$ with respect to an \emph{elimination order}\footnote{See Excercises 5 and 6 in \S1 of Chapter 3 in \cite{CoxLittleOshea}} on the set $\{x_{i,j}\mid 1\leq i<j\leq n\}$ in which all the indeterminates in $\supp p$ are greater than all the indeterminates in its complement. 

Given $\mathcal G_{\cm n}$ it is straightforward to determine a \grobner{} basis $\mathcal G_{\ideal p}$ for the ideal $\ideal p=\cm{n}\cap\mathbb C[\supp p]$: it is the intersection
\[\mathcal G_{\ideal p}=\mathcal G_{\cm n}\cap \mathbb C[\supp p].\]
Therefore, the only element in $\mathcal G_{\cm n}$ supported on $\supp p$ is precisely $p$, possibly multiplied by a non-zero scalar.

The complexity and performance of algorithms for computing \grobner{} bases depends heavily on the choice of a monomial order. The method described above holds only for elimination orders which in practice often quickly become infeasible. In general, the main problems of Elimination Theory, such as the Ideal Triviality Problem, the Ideal Membership Problem for Complete Intersections, the Radical Membership Problem, the
General Elimination Problem, and the Noether Normalization are in the PSPACE compelxity class \cite{MateraMariaTurull}. 

In the particular case of Cayley-Menger ideals $\cm n$, already for $n=6$ we were not able to compute\footnote{The two  computers that we used scored 2.43 and 2.92 on Mathematica's V12.1.1.0 Benchmark Report, considerably higher than the highest score of 1.89 that the report is comparing against.} a \grobner{} basis with respect to an elimination order, neither within Mathematica nor within Macaulay2. Nevertheless, in the next Section~\ref{sec:algResCircuits} we will present a more efficient method for computing circuit polynomials that allowed us to compute all circuit polynomials for $\cm 6$.

\subparagraph{Circuit polynomials in $\cm 6$.}
We demonstrate now the effectiveness of our method for computing circuit polynomials by computing all the circuit polynomials in $\cm{6}$. These polynomials are supported on six types of graphs: a $K_4$, a wheel on $4$ vertices, a wheel on $5$ vertices, a $2$-dimensional ``double banana'', the Desargues-plus-one-edge graph, and the $K_{3,3}$-plus-one-edge graph. These graphs are shown in Fig.~\ref{fig:graphW5K4} and Fig.~\ref{fig:6circuits}. To the best of our knowledge, except for the circuit polynomial of a $K_4$ graph which is the unique minor generating $\cm 4$, these circuit polynomials have not been computed before.



\subparagraph{The $K_4$ circuits.} The circuit polynomial for the $K_4$ graph on the vertices $1234$ is the (up to a multiplication with a scalar) unique generator of $\cm{4}$ given by
\begin{align*}
p_{K_4^{1234}}&=x_{3,4} x_{1,2}^2+x_{3,4}^2 x_{1,2}+x_{1,3} x_{2,3} x_{1,2}-x_{1,4} x_{2,3} x_{1,2}-x_{1,3} x_{2,4} x_{1,2}\\
&+x_{1,4}^2 x_{2,3}+x_{1,3} x_{2,4}^2+x_{1,4} x_{2,4} x_{1,2}-
x_{1,3} x_{3,4} x_{1,2}-x_{1,4} x_{3,4} x_{1,2}\\
&+x_{1,3}^2 x_{2,4}+x_{1,4} x_{2,3}^2-x_{2,3} x_{3,4} x_{1,2}-x_{2,4} x_{3,4} x_{1,2}
+x_{2,3} x_{2,4} x_{3,4}\\
&-x_{1,3} x_{2,4} x_{3,4}-x_{1,3} x_{1,4} x_{2,3}
-x_{1,3} x_{1,4} x_{2,4}-x_{1,3} x_{2,3} x_{2,4}\\
&-x_{1,4} x_{2,3} x_{2,4}+x_{1,3} x_{1,4} x_{3,4}-x_{1,4} x_{2,3} x_{3,4}.
\end{align*}
This polynomial has 22 terms, its homogeneous degree is 3, and it is of degree 2 in any of its variables.

\subparagraph{Wheel on 4 vertices.} The circuit polynomial $p_{W^{1234,5}}$ supported on the wheel $W^{1234,5}$ on vertices $1234$ with $5$ in the centre is the resultant $\res{p_{K_4^{1245}}}{p_{K_4^{2345}}}{x_{2,4}}$, where $p_{K_4^{1245}}$ and $p_{K_4^{2345}}$ are circuit polynomials supported on $K_4$ on the vertices $1245$ and $2345$, respectively. It can be verified with a computer algebra package that this resultant is irreducible.


This polynomial has 843 terms, its homogeneous degree is 8, and it is of degree 4 in each of its variables.

\subparagraph{The ``2D double banana''.} The circuit polynomial supported on the 2D double banana $\{12,13,23,24,34,15,16,56,45,46\}$ is the resultant $\res{p_{K_4^{1234}}}{p_{K_4^{1456}}}{x_{1,4}}$, where $p_{K_4^{1234}}$ and $p_{K_4^{1456}}$ are circuit polynomials supported on $K_4$ on the vertices $1245$ and $2345$, respectively. It can be verified with a computer algebra package that this resultant is irreducible.

This polynomial has 1752 terms, its homogeneous degree is 8, and it is of degree 4 in each of its variables.

\subparagraph{Wheel on 5 vertices.} The circuit polynomial $p_{W^{12345,6}}$ supported on the wheel $W^{12345,6}$ on vertices $12345$ with $6$ in the centre is the resultant $\res{p_{W^{1245,6}}}{p_{K_4^{2346}}}{x_{2,4}}$, where $p_{W^{1245,6}}$ and $p_{K_4^{2346}}$ are circuit polynomials supported on the wheel $W^{1245,6}$ on the four vertices $1245$ with 6 in the centre and the $K_4$ on te vertices $2345$, respectively. It can be verified with a computer algebra package that this resultant is irreducible.

This polynomial has 273123 terms, its homogeneous degree is 20, and it is of degree 8 in each of its variables.

\subparagraph{The Desargues-plus-one circuit.}\label{example:desarguesPlus1} Let $D$ be the Desargues graph $\{12,14,15,23,26,34,36,45$, $56\}$. The graph $D$ can be completed to a circuit by adjoining to it exactly one of the edges in the set $\{13,16,24,25,35,46\}$, however all choices result in isomorphic graphs. We will therefore only show how to obtain the circuit polynomial $p_{D\cup\{16\}}$ as the other circuit polynomials can be obtained by appropriate relabeling.

The circuit polynomial $p_{D\cup\{16\}}$ is the resultant $\res{p_{W^{1234,5}}}{p_{K_4^{1245}}}{x_{2,5}}$. Its irreducibility can be verified with a computer algebra package. It has 658175 terms, its homogeneous degree is 20, it is of degree 12 in the variable $x_{1,6}$ and of degree $8$ in the remaining variables.

\subparagraph{The $K_{3,3}$-plus-one circuit.}\label{example:K33plus1} Consider the complete bipartite graph $K_{3,3}$ on the vertex partition $\{1,4,5\}\cup\{2,3,6\}$. It can be completed to a circuit by adjoining to it exactly one of the edges in the set $\{14,15,45,23,26,36\}$, say $45$, and as in the case of the Desargues-plus-one circuit, any other choice will result in a graph isomorphic to $K_{3,3}\cup\{45\}$.

The circuit polynomial $p_{K_{3,3,}\cup\{45\}}$ is an irreducible polynomial with 1018050 terms, of homogeneous degree 18, and of degree 8 in each of its variables.

\begin{proposition}
	\label{prop:k33notObtainable}The circuit polynomial $p_{K_{3,3,}\cup\{45\}}$ can not be obtained directly as the resultant of two circuit polynomials supported on circuits with 6 or less vertices.
\end{proposition}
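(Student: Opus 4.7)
The plan is to leverage the homogeneity of circuit polynomials in $\cm n$ (Proposition \ref{prop:circHomogeneous}) together with the closed-form for the homogeneous degree of a resultant (Proposition \ref{prop:resultantHomogeneous}): if $p_A$ and $p_B$ are homogeneous of degrees $m$ and $n$ and have degrees $r$ and $s$ in a common variable $x_e$, then $\res{p_A}{p_B}{x_e}$ is homogeneous of degree $ms + nr - rs$. Since $p_{K_{3,3}\cup\{45\}}$ has homogeneous degree $18$, it suffices to show that this formula never produces $18$ over the finite set of admissible $(m,r,n,s)$ arising from circuit polynomials on circuits with at most six vertices.

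First I would read off from Section \ref{sec:experiments} the pair (homogeneous degree, degree in any variable) of the circuit polynomials for the six circuit types on $\le 6$ vertices: $(3,2)$ for $K_4$, $(8,4)$ for $W_4$, $(8,4)$ for the 2D double banana, $(20,8)$ for $W_5$, $(18,8)$ for $K_{3,3}$-plus-one, and $(20,8)$ for Desargues-plus-one except for its distinguished edge $x_{1,6}$, for which the pair is $(20,12)$. Apart from this Desargues exception, every circuit polynomial has uniform degree across its variables, so the case analysis is finite and completely determined by these data.

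Next I would tabulate $ms + nr - rs$ over all unordered pairs of such $(m,r)$ values. The outcome is the set of attainable resultant homogeneous degrees $\{8,\,20,\,44,\,48,\,52,\,104,\,112,\,128,\,224,\,240,\,256,\,280,\,304,\,336\}$: the value $8$ arises only from $K_4 \times K_4$; the next-smallest value is $20$, attained by $K_4 \times W_4$ and $K_4 \times$ double banana; and every other pair contributes at least $44$. In particular, $18$ does not appear, so no such resultant can equal $p_{K_{3,3}\cup\{45\}}$, even up to a non-zero scalar.

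The only step that requires care is verifying the hypotheses of Proposition \ref{prop:resultantHomogeneous}. For the elimination variable $x_e$ to yield a non-degenerate resultant, it must lie in $\supp{p_A}\cap \supp{p_B}$, which is automatic when $e \in A\cap B$; and one needs $\res{p_A}{p_B}{x_e} \ne 0$, which follows whenever $A \ne B$ because distinct irreducible circuit polynomials have no common factor (Proposition \ref{prop:resultantCommonFactor}). If $A = B$ the operation is not a meaningful combinatorial resultant. There is no deeper combinatorial or algebraic obstacle than this bookkeeping; the essential content is the arithmetic mismatch $18 \notin \{8,\,20,\,44,\,48,\,52,\,\ldots\}$ forced by the rigid degree formula.
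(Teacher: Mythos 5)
Your proof is correct and takes essentially the same route as the paper's: both rely on the homogeneity of circuit polynomials and the degree formula of Proposition \ref{prop:resultantHomogeneous}, then verify that the finitely many admissible pairs $(h_i,d_i)\in\{(3,2),(8,4),(18,8),(20,8),(20,12)\}$ can never produce homogeneous degree $18$. Your explicit tabulation of the attainable degrees and the non-vanishing check via Proposition \ref{prop:resultantCommonFactor} merely spell out the paper's terse case analysis in more detail.
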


Here by directly we mean that no choice of two circuit polynomials on at most 6 vertices will give an irreducible resultant supported on ${K_{3,3}\cup\{45\}}$.

\begin{proof}Let $p_1$ and $p_2$ be any two circuit polynomials supported on a circuit with 6 or less vertices. Consider the resultant $\res{p_1}{p_2}{x_e}$ for some common variable $x_e$. Let $h_1$ and $h_2$ be the homogeneous degrees, and let $d_1$ and $d_2$ be the degrees in $x_e$ of $p_1$ and $p_2$, respectively.

By Proposition \ref{prop:resultantHomogeneous}, the homogeneous degree of $\res{p_1}{p_2}{x_e}$ is $h_1d_2+h_2d_1-d_1d_2$, so if $\res{p_1}{p_2}{x_e}=p_{K_{3,3,}\cup\{45\}}$, then $h_1d_2+h_2d_1-d_1d_2=18$. However, the only possible choices for $(h_i,d_i)$ are $(3,2)$, $(8,4)$, $(18,8)$, $(20,8)$ and $(20,12)$ (say which graph they correspond to), none of which result with $h_1d_2+h_2d_1-d_1d_2=18$.\end{proof}

However, $K_{3,3}\cup\{45\}$ is the combinatorial resultant of the wheels $W^{1234,5}$ and $W^{1346,5}$ on 1234 with 5 in the centre, and on 1346 with 5 in the centre, respectively, with the edge 15 eliminated. Since $p_{W^{1234,5}}$ and $p_{W^{1346,5}}$ are of homogeneous degree 8, and have degree $4$ in $x_{1,5}$, it follows from Proposition \ref{prop:resultantHomogeneous} that $\res{p_{W^{1234,5}}}{p_{W^{1346,5}}}{x_{1,5}}$ has homogeneous degree 48, hence $p_{K_{3,3}\cup\{45\}}$ appears in $\res{p_{W^{1234,5}}}{p_{W^{1346,5}}}{x_{1,5}}$ as a factor, with multiplicity not greater than 2.

We were not able to compute the resultant $\res{p_{W^{1234,5}}}{p_{W^{1346,5}}}{x_{1,5}}$ before our machines ran out of memory. We are currently exploring a High Performance Computer platform to test the limits of our method. Meanwhile, we developed an extension of our algorithm \cite{malic:streinu:k33} and computed $p_{K_{3,3}\cup\{45\}}$ with it.

\section{Concluding Remarks}
\label{sec:concludingRemarks}

In this paper we introduced the combinatorial resultant operation, analogous to the  classical resultant of polynomials.  To demonstrate the effectiveness of our method we conclude by listing in Table~\ref{tbl:circPolyAppendix} the circuit polynomials that we could compute within a reasonable amount of time. The most challenging was the $K_{3,3}$-plus-one circuit, which required an extension of the method presented here: this {\em extended resultant} is the topic of an upcoming paper \cite{malic:streinu:k33}.

However, this method still has computational drawbacks in the sense that it requires an irreducibility check, with a possible further factorization and an ideal membership test for those factors that have the support of a circuit.

Ideally we would like to detect combinatorially when a resultant of two circuit polynomials that has the support of a circuit will be irreducible. The absolute irreducibility test of Gao \cite{Gao} which states that a polynomial is absolutely irreducible if and only if its Newton polytope is integrally indecomposable, in conjunction with the description of the Newton polytope of the resultant of two polynomials by Gelfand, Kapranov and Zelevinsky \cite{GelfandKapranovZelevinskyNewton, GelfandKapranovZelevinsky} gives a combinatorial criterion for absolute irreducibility, but not for irreducibility over $\mathbb Q$. However, not every circuit polynomial is absolutely irreducible, for example the circuit polynomial of a wheel on 4 vertices is irreducible over $\mathbb Q$ but not absolutely irreducible. 

\subparagraph{What we observed in practice.} It is worthwhile to note that whenever in our computations we had to decide which factor of a resultant belonged to $\cm n$ we never had to preform an ideal membership test. It was always sufficient to inspect only the supports of the irreducible factors of the resultant, and in all cases we had the situation where all but one irreducible factor were supported on Laman graphs, and one factor was supported on a dependent set. It seems unlikely that this is the general case, and it would be of interest to determine under which conditions does the resultant have exactly one factor (up to multiplicity) supported on a dependent set in $\amat{\cm n}$. 

We conclude the paper with a list of open problems about the algebraic and geometric structure of the resultant of two circuit polynomials.

\begin{problem}
	Let $C$, $A$ and $B$ be sparsity circuits such that $C=\cres{A}{B}{e}$. Let $p_C$, $p_{A}$ and $p_{B}$ be the corresponding circuit polynomials and assume that $\res{p_{A}}{p_{B}}{x_e}$ is reducible.

Under which conditions on $C$, $A$, $B$, $p_{A}$ and $p_{B}$ does $\res{p_{A}}{p_{B}}{x_e}$ have up to multiplicity exactly one irreducible factor over $\q$ equal to $p_C$?\end{problem}

\begin{problem}
More generally, if $p,q\in \cm n$ with $x_e\in\supp p\cap\supp q$, under which conditions on $p$, $q$, $\supp p$ and $\supp q$ does $\res{p}{q}{x_e}$ have up to multiplicity exactly one irreducible factor supported on a dependent set in $\amat{\cm n}$?
\end{problem}

The irreducible factor of $\res{p_{A}}{p_{B}}{x_e}$ corresponding to the circuit polynomial $p_C$ defines the variety in $\mathbb C^{\binom{n}{2}}$ of complex placements of $C$, however it is not clear what is the geometric significance of other factors, if there are any.

\begin{problem}If $\res{p_{A}}{p_{B}}{x_e}$ has a factor supported on a Laman graph, what is its significance? Does this factor have a geometric interpretation?
\end{problem}

The degree with respect to a single variable in the support of a circuit polynomial is bounded by the number of complex placements of the underlying Laman graph \cite{streinu:borcea:numberEmbeddings:2004}, with an upper bound of $\binom{2n-4}{n-2}\approx 4^n$. We have observed that the number of terms of circuit polynomials quickly becomes large, as shown in the Table \ref{tbl:circPolyAppendix}. 

\begin{problem}
How big do circuit polynomials get, i.e.\ what are the upper and lower bounds on the number of monomial terms relative to the number of vertices $n$?\end{problem}

\begin{table}[ht]
\centering
\caption{Results: all circuit polynomials on $n\leq 6$ vertices and two circuit polynomials on $n=7$ vertices. For the definition of Extended Resultant, see  \cite{malic:streinu:k33}.}
\label{tbl:circPolyAppendix}
\begin{tabularx}{\textheight}
	{|c|c|c|>{\centering}m{1.65cm}|c|>{\centering}m{0.95cm}|}
	\cline{1-6}
	
	$n$ & 
	Circuit & Method & 
	Comp.\ time (seconds) &
	No.\ terms & 
	Hom.\ degree \cr	
	
	\cline{1-6}	
	\multirow{2}*{4} & 
	\multirow{2}*{$K_4$} & 
	\multirow{2}*{Determinant} & 
	\multirow{2}*{0.0008} & 
	\multirow{2}*{22} & 
	\multirow{2}*{3} \cr	
	& & & & & 
	\cr	
		
	\cline{1-6}			
	\multirow{2}*{5} &  
	\multirow{2}*{Wheel on 4 vertices} & 
	\grobner{} & 
	0.02 &
	\multirow{2}*{843} & 
	\multirow{2}*{8} \cr	
	& & Resultant & 0.013 & & 
	\cr	
			
	\cline{1-6}			
	\multirow{2}*{6} & 
	\multirow{2}*{2D double banana} & 
	\grobner{} & 
	0.164 & 
	\multirow{2}*{1 752} & 
	\multirow{2}*{8} \cr	
	& & Resultant & 0.029 & & 
	\cr	
		
	\cline{1-6}			
	\multirow{2}*{6} & 
	\multirow{2}*{Wheel on 5 vertices} & 
	\grobner{} & 
	10 857 &
	\multirow{2}*{273 123} & 
	\multirow{2}*{20} \cr	
	& & Resultant & 7.07 & & 
	\cr	
	
	\cline{1-6}		
	\multirow{2}*{6} & 
	\multirow{2}*{Desargues-plus-one} & 
	\grobner{} & 454 753 & 	
	\multirow{2}*{658 175} & 
	\multirow{2}*{20} \cr	
	& & Resultant & 14.62 & & 
	\cr	
		
	\cline{1-6}					
	\multirow{2}*{6} & 
	\multirow{2}*{$K_{3,3}$-plus-one} & 
	\multirow{2}*{Extended Resultant} & 
	\multirow{2}*{1 402} & 
	\multirow{2}*{1 018 050} & 
	\multirow{2}*{18} \cr	
	& & & & & 
	\cr	
		
	\cline{1-6}			
	\multirow{2}*{7} & 
	\multirow{2}*{2D double banana $\oplus_{16}$ $K_4^{1567}$} & 
	\multirow{2}*{Resultant} & 
	\multirow{2}*{38.14} & \multirow{2}*{1 053 933} & 
	\multirow{2}*{20} \cr	
	& & & & & 
	\cr	
			
	\cline{1-6}		
	\multirow{2}*{7} & 
	\multirow{2}*{2D double banana $\oplus_{56}$ $K_4^{4567}$} & 
	\multirow{2}*{Resultant} & 
	\multirow{2}*{89.86} & 
	\multirow{2}*{2 579 050} & 
	\multirow{2}*{20} \cr
	& & & & &
	\cr		
		
	\cline{1-6}
	
\end{tabularx}
\end{table}

\bibliography{biblio} 

\end{document}